\theoremstyle{plain}
\newtheorem{theorem}{Theorem}[section]
\newtheorem{proposition}[theorem]{Proposition}
\newtheorem{corollary}[theorem]{Corollary}
\theoremstyle{definition}
\newtheorem{definition}[theorem]{Definition}
\newtheorem{remark}[theorem]{Remark}
\newtheorem{problem}[theorem]{Problem}
\newcommand\C{{\mathbb C}}
\renewcommand\phi{{\varphi}}
\newcommand\B{{\mathcal B}}
\newcommand\Z{{\mathbb Z}}
\renewcommand\H{{\mathcal H}}
\newcommand{\del}{\partial}
\newcommand{\delbar}{{\overline{\del}}}
\DeclareMathOperator{\Lie}{Lie}
\DeclareMathOperator{\rk}{rk}
\DeclareMathOperator{\im}{im\,}
\let\phi\varphi
\let\c\overline
\title[On the cohomology of the Bigolin complex]{On the cohomology of the Bigolin complex}
\author{Riccardo Piovani}
\address{Dipartimento di Matematica \lq\lq G. Peano",
Universit\`{a} degli Studi di Torino, Via Carlo Alberto 10,
10123 Torino, Italy}
\email{riccardo.piovani@unito.it}
\keywords{Schweitzer complex, Hodge theory, compact complex manifolds, elliptic operators, almost complex manifolds, zigzags, double complex, Iwasawa manifold}
\thanks{\newline 
The author is partially supported by GNSAGA of INdAM
}
\subjclass[2020]{32Q55, 32Q60}
\begin{document}

\begin{abstract}
Given a compact complex manifold, we study the cohomology and the Hodge theory for the elliptic complex of differential forms defined by Bigolin in 1969 and recently referred to as the Schweitzer complex.
Recall that the double complex of a compact complex manifold decomposes into a direct sum of so-called squares and zigzags, and the zigzags are the only components contributing to cohomology. The main result of this paper states that in complex dimension 3, the multiplicities of zigzags in this decomposition are completely characterised by Betti, Hodge, Aeppli numbers plus Bigolin numbers, namely the dimensions of the Bigolin cohomology. The result is sharp, meaning that if we remove Hodge or Bigolin numbers from the previous statement then it becomes false. In addition, we compute the Bigolin cohomology on the small deformations of the complex structure of the Iwasawa manifold, and then apply the main theorem to fully describe the double complexes of all the small deformations. We also prove a Hodge decomposition for Bigolin harmonic forms on compact K\"ahler manifolds of any dimension. Finally, we partially extend the definition of this complex on almost complex manifolds, providing a cohomological invariant on $1$-forms which is finite when the manifold is compact.
\end{abstract}

\maketitle


\section{Introduction}
Given a complex manifold $M$, the exterior derivative on $(p,q)$-forms splits as
\[
d=\del+\delbar:A^{p,q}\to A^{p+1,q}\oplus A^{p,q+1},
\]
so that $d^2=0$ is equivalent to the relations
\[
\del^2=\delbar^2=\del\delbar+\delbar\del=0.
\]
Therefore, there are natural complexes of differential forms arising on $M$. The simplest is the \emph{Dolbeault complex}
\[
\dots\longrightarrow A^{p,q-1}\overset{\delbar}{\longrightarrow} A^{p,q}\overset{\delbar}{\longrightarrow} A^{p,q+1}{\longrightarrow}\dots
\]
together with its conjugate 
\[
\dots\longrightarrow A^{p-1,q}\overset{\del}{\longrightarrow} A^{p,q}\overset{\del}{\longrightarrow} A^{p+1,q}{\longrightarrow}\dots
\]
 The associated cohomology spaces are the \emph{Dolbeault cohomology} and the \emph{$\del$-cohomology}
\begin{align*}
H^{p,q}_\delbar:=\frac{\ker\delbar\cap A^{p,q}}{\delbar A^{p,q-1}}, &&
H^{p,q}_\del:=\frac{\ker\del\cap A^{p,q}}{\del A^{p-1,q}}.
\end{align*}
There is another natural complex, which will be the main subject of this paper
\begin{equation}\label{eq_big_complex}
\begin{tikzcd}
\dots\arrow[d,"\del \oplus d\oplus d \oplus \delbar"]\\
 A^{p-2,q}\oplus A^{p-1,q-1}\oplus A^{p,q-2}\arrow[d,"\del \oplus d \oplus \delbar"]\\
 A^{p-1,q}\oplus A^{p,q-1}\arrow[d,"\del \oplus \delbar"]\\
 A^{p,q}\arrow[d,"\del\delbar"]\\
 A^{p+1,q+1}\arrow[d,"d"]\\
 A^{p+1,q+2}\oplus A^{p+2,q+1}\arrow[d,"d"]\\
 A^{p+1,q+3}\oplus A^{p+2,q+2}\oplus A^{p+3,q+1}\arrow[d,"d"]\\
\dots
\end{tikzcd}
\end{equation}
and which, in these notes, will be referred to as the \emph{Bigolin complex}, for reasons which hopefully will become clear soon. Its complete definition is provided in Section \ref{section_complex}.

We provide a short historical review of the study of the Bigolin complex.
It was introduced by Bigolin in \cite{Bi,Bi2} as a tool to study the Aeppli and Bott-Chern cohomology groups
\begin{align*}
H^{p,q}_A:=\frac{\ker \del\delbar\cap A^{p,q}}{\del A^{p-1,q}+\delbar A^{p,q-1}},&&
H^{p+1,q+1}_{BC}:=\frac{\ker d\cap A^{p+1,q+1}}{\del\delbar A^{p,q}},
\end{align*}
\textit{i.e.}, the cohomology of the  central part of the complex \eqref{eq_big_complex}, namely the shorter complex given by the map $\del\delbar$ and the two adjacent ones. Notice that in \cite[proof of Teorema 2.1]{Bi} it is proved that the cohomology of the complex \eqref{eq_big_complex} is finite dimensional on compact complex manifolds. Later, the hypercohomology of the complex \eqref{eq_big_complex} was studied by Demailly\footnote{The bibliography of \cite[Section 4]{S} refers to [J.P. Demailly, Analytic Geometry, 1993].} in his book \cite[Chapter VI, Section 12]{De} and by Schweitzer in his thesis \cite[Section 4]{S}. In \cite[Chapter VI, proof of Theorem 12.4]{De} there is a second proof of the finite dimensionality of the cohomology of the complex \eqref{eq_big_complex} on compact complex manifolds.

Recently, the complex \eqref{eq_big_complex} has been the central subject of the paper \cite{Ste} by Stelzig, where it is referred to as the \emph{Schweitzer complex}. Stelzig shows that the Schweitzer complex is elliptic \cite[Lemma 2.4]{Ste} (this provides a third proof of the finite dimensionality of the cohomology of the complex \eqref{eq_big_complex} on compact complex manifolds), proves an index theorem \cite[Theorem 2.8]{Ste} and other fundamental properties like Poincaré duality \cite[Corollary C]{Ste}. In \cite{MS}, a generalised version of Massey products has been defined starting from this Schweitzer complex. The cohomology spaces arising from the Schweitzer complex appear also in \cite{Ste23}. This complex is then considered in \cite{YY} to study bimeromorphic invariants associated to Bott-Chern hypercohomology. A related $L^2$ Hilbert complex has been defined in \cite{HP}, where \eqref{eq_big_complex} is called \emph{Aeppli-Bott-Chern complex}, since the focus of the paper is the  central part of the complex and its $L^2$ cohomology. 

Apart from the above mentioned works, the Bigolin complex has received very little attention, as noticed also in \cite{Ste}. For instance, in the literature there are no explicit examples of calculation of Bigolin numbers, namely the dimensions of the Bigolin cohomology spaces, \textit{i.e.}, of the cohomology of the Bigolin complex. The only well studied part of the complex is the central part, related to Aeppli and Bott-Chern cohomology. In view of these considerations, the main purposes of this paper are the following: to settle the fundamental Hodge theoretic properties of the extremal parts of the Bigolin complex; to prove that these Bigolin numbers are useful to describe the double complex of differential $(p,q)$-forms associated to a compact complex manifold of complex dimension 3; to compute the cohomological invariants of the Bigolin complex on examples of compact complex manifolds;  finally, to suggest the study of a new cohomological invariant related to 1-forms which can be defined on compact almost complex manifolds. 

The paper is organised as follows.
In Section \ref{section_complex}, for every choice of a bidegree $(p,q)$ on a complex manifold $M$, we define the Bigolin complex, denoted by $(\B^\bullet_{p,q},\delta^\bullet_{p,q})$, and its cohomology
\[
H^\bullet_{p,q}:=\frac{\ker\delta^\bullet_{p,q}\cap \B^\bullet_{p,q}}{\delta^{\bullet-1}_{p,q} \B^{\bullet-1}_{p,q}}.
\]
We then present several relations between this Bigolin cohomology and other cohomology spaces, which follow directly from their definitions.

Section \ref{sec_hodge} is first devoted to the definition of the elliptic operators 
\[
\square^k_{p,q}:\B^k_{p,q}\to\B^k_{p,q}
\]
which are naturally associated to the elliptic complex (cf. \cite[Proof of Corollary D]{Ste}, \cite[Section 2]{HP}, \cite[Section 1]{V}). Given a Hermitian metric $g$ on $M$, the operator $\square^k_{p,q}$ is defined via the differential 
\[
\delta^k_{p,q}:\B^k_{p,q}\to\B^{k+1}_{p,q}
\] 
and its formal adjoint 
\[
(\delta^k_{p,q})^*:\B^{k+1}_{p,q}\to\B^{k}_{p,q}.
\]
If the manifold $M$ is compact, then 
 \[
 \ker\square^k_{p,q}=\ker\delta^k_{p,q}\cap\ker(\delta^{k-1}_{p,q})^* \simeq H^k_{p,q}
 \]
  and it has finite dimension $h^k_{p,q}$. Furthermore, Hodge theory provides several $L^2$ orthogonal decompositions of the spaces of forms $\B^k_{p,q}$.
\begin{theorem}[{see Theorem \ref{theorem hodge decomposition}}]\label{theorem_intro_decomp}
Given a compact Hermitian manifold $(M,g)$, there are $L^2$ orthogonal decompositions
\[
\B^k_{p,q}=\ker\square^k_{p,q}\oplus\delta^{k-1}_{p,q}\B^{k-1}_{p,q}\oplus(\delta^{k}_{p,q})^*\B^{k+1}_{p,q}.
\]
\end{theorem}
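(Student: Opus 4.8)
The plan is to deduce the decomposition from the general Hodge theory of elliptic complexes on compact manifolds, the only nontrivial input being that the Bigolin complex is elliptic. Since $(\B^\bullet_{p,q},\delta^\bullet_{p,q})$ is an elliptic complex (the exactness of its principal symbol sequence being already recorded in \cite[Lemma 2.4]{Ste}), the associated second-order operator
\[
\square^k_{p,q}=(\delta^k_{p,q})^*\delta^k_{p,q}+\delta^{k-1}_{p,q}(\delta^{k-1}_{p,q})^*\colon\B^k_{p,q}\to\B^k_{p,q}
\]
is formally self-adjoint and elliptic. First I would invoke the classical analytic package for self-adjoint elliptic operators acting on sections of a Hermitian vector bundle over a compact manifold: $\square^k_{p,q}$ is Fredholm, its kernel $\ker\square^k_{p,q}$ is finite dimensional and, by elliptic regularity, consists of smooth forms, and its range is closed. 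This yields the basic $L^2$ orthogonal splitting $\B^k_{p,q}=\ker\square^k_{p,q}\oplus\im\square^k_{p,q}$.

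Next I would rewrite the image of the Laplacian. For any $\alpha\in\B^k_{p,q}$ one has $\square^k_{p,q}\alpha=\delta^{k-1}_{p,q}\big((\delta^{k-1}_{p,q})^*\alpha\big)+(\delta^k_{p,q})^*\big(\delta^k_{p,q}\alpha\big)$, so that $\im\square^k_{p,q}\subseteq\delta^{k-1}_{p,q}\B^{k-1}_{p,q}+(\delta^k_{p,q})^*\B^{k+1}_{p,q}$. For the reverse inclusion I would show that both $\delta^{k-1}_{p,q}\B^{k-1}_{p,q}$ and $(\delta^k_{p,q})^*\B^{k+1}_{p,q}$ are $L^2$-orthogonal to $\ker\square^k_{p,q}$, using the description $\ker\square^k_{p,q}=\ker\delta^k_{p,q}\cap\ker(\delta^{k-1}_{p,q})^*$ recalled in the excerpt: if $h$ is harmonic, then $\la\delta^{k-1}_{p,q}\beta,h\ra=\la\beta,(\delta^{k-1}_{p,q})^*h\ra=0$ and $\la(\delta^k_{p,q})^*\gamma,h\ra=\la\gamma,\delta^k_{p,q}h\ra=0$. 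Hence $\delta^{k-1}_{p,q}\B^{k-1}_{p,q}+(\delta^k_{p,q})^*\B^{k+1}_{p,q}\subseteq(\ker\square^k_{p,q})^\perp=\im\square^k_{p,q}$, and the two inclusions give equality.

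Finally I would check that the three summands are pairwise orthogonal, so the sum is direct and $L^2$-orthogonal. Orthogonality to $\ker\square^k_{p,q}$ was just established, while the crucial cross term vanishes thanks to the complex relation $\delta^k_{p,q}\delta^{k-1}_{p,q}=0$: for $\beta\in\B^{k-1}_{p,q}$ and $\gamma\in\B^{k+1}_{p,q}$,
\[
\la\delta^{k-1}_{p,q}\beta,(\delta^k_{p,q})^*\gamma\ra=\la\delta^k_{p,q}\delta^{k-1}_{p,q}\beta,\gamma\ra=0 .
\]
I expect the only genuine obstacle to be the verification of ellipticity, i.e.\ the pointwise exactness of the symbol sequence of $(\B^\bullet_{p,q},\delta^\bullet_{p,q})$; once this is granted (and it is available from \cite{Ste}), the rest is the routine functional-analytic and bookkeeping machinery outlined above. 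A secondary point requiring care is keeping the degree indexing of the adjoints $(\delta^{k-1}_{p,q})^*$ and $(\delta^k_{p,q})^*$ consistent, so that the Laplacian and its kernel match the formula stated in the excerpt.
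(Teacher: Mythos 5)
Your overall skeleton --- the kernel--image splitting for a formally self-adjoint elliptic operator, the inclusion $\im\square^k_{p,q}\subseteq\delta^{k-1}_{p,q}\B^{k-1}_{p,q}+(\delta^k_{p,q})^*\B^{k+1}_{p,q}$, and the pairwise orthogonality of the three summands via $\delta^k_{p,q}\delta^{k-1}_{p,q}=0$ --- is exactly the standard machinery the paper invokes (it gives no detailed proof, citing Atiyah--Bott and Varouchas). However, there is a genuine gap at the two central degrees. The operator you write down, $(\delta^k_{p,q})^*\delta^k_{p,q}+\delta^{k-1}_{p,q}(\delta^{k-1}_{p,q})^*$, agrees with the paper's $\square^k_{p,q}$ only for $k\ne p+q,\,p+q+1$. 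At $k=p+q$ the differential $\delta^{p+q}_{p,q}=\del\delbar$ has order $2$ while $\delta^{p+q-1}_{p,q}$ has order $1$, so your sum is a mixed-order operator whose principal symbol (of order $4$) is that of $(\del\delbar)^*\del\delbar$ alone; this symbol is degenerate --- by exactness of the symbol sequence, everything in the image of the symbol of $\delta^{p+q-1}_{p,q}$ lies in its kernel --- so the operator is \emph{not} elliptic (nor second order, as you call it). The same happens at $k=p+q+1$. Hence the ``classical analytic package'' cannot be applied to your operator at precisely those degrees, and your $\ker\square^k_{p,q}$ is not literally the space in the statement there: the paper defines $\square^{p+q}_{p,q}:=(\delta^{p+q}_{p,q})^*\delta^{p+q}_{p,q}+\bigl(\delta^{p+q-1}_{p,q}(\delta^{p+q-1}_{p,q})^*\bigr)^2=\square_A$ and $\square^{p+q+1}_{p,q}:=\bigl((\delta^{p+q+1}_{p,q})^*\delta^{p+q+1}_{p,q}\bigr)^2+\delta^{p+q}_{p,q}(\delta^{p+q}_{p,q})^*=\square_{BC}$, which are elliptic of order $4$. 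Note this also shows that ellipticity of the Laplacians is \emph{not} a formal consequence of symbol exactness of the complex, contrary to your closing assessment: for mixed-order complexes the Laplacians must be modified, which is the whole point of the Kodaira--Spencer/Schweitzer-type operators $\square_A$, $\square_{BC}$.

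The repair is routine but must be made explicit: at $k=p+q$ and $k=p+q+1$ run your argument with $\square_A$ and $\square_{BC}$. Integration by parts gives $\la\square_A\alpha,\alpha\ra=\lv\delta^{k}_{p,q}\alpha\rv^2+\lv\delta^{k-1}_{p,q}(\delta^{k-1}_{p,q})^*\alpha\rv^2$, and pairing $\delta^{k-1}_{p,q}(\delta^{k-1}_{p,q})^*\alpha$ against $\alpha$ once more shows $\ker\square_A=\ker\delta^k_{p,q}\cap\ker(\delta^{k-1}_{p,q})^*$, so the harmonic space is the same one your orthogonality computations use; similarly for $\square_{BC}$. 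The image inclusion also survives, since $\bigl(\delta^{k-1}_{p,q}(\delta^{k-1}_{p,q})^*\bigr)^2\alpha\in\delta^{k-1}_{p,q}\B^{k-1}_{p,q}$ and $\bigl((\delta^k_{p,q})^*\delta^k_{p,q}\bigr)^2\alpha\in(\delta^k_{p,q})^*\B^{k+1}_{p,q}$. With these substitutions, and with ellipticity of $\square_A,\square_{BC}$ taken from the sources the paper cites (the proof of Corollary D in Stelzig's paper, or the Aeppli--Bott--Chern Laplacian literature), the rest of your argument goes through verbatim and yields the stated decomposition at every degree.
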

We also show that the Hodge * operator and conjugation induce a symmetry for the invariants $h^k_{p,q}$ and consequently for the Euler characteristic $\chi_{p,q}$ of the complex, namely
\[
h^k_{p,q}=h^{2n-k-1}_{n-q-1,n-p-1}=h^{2n-k-1}_{n-p-1,n-q-1}=h^k_{q,p},
\]
\[
\chi_{p,q}=-\chi_{n-q-1,n-p-1}=-\chi_{n-p-1,n-q-1}=\chi_{q,p}.
\]
This symmetry is also proved in \cite[Corollary C]{Ste} by another method\footnote{The difference in the indices is compatible with the different definitions of the complex in the two papers.}.

We recall that on a compact K\"ahler manifold we have the Hodge decomposition for de Rham harmonic forms: $\ker\Delta_d\cap A^k_\C=\oplus_{p+q=k}\ker \Delta_\delbar\cap A^{p,q}$, where $\Delta_D=DD^*+D^*D$ and $D^*$ denotes the formal adjoint of $D$ for $D\in\{d,\delbar,\del\}$.
By applying K\"ahler identities, we are able to prove the following Hodge decomposition for the harmonic forms of the complex $(\B^\bullet_{p,q},\delta^\bullet_{p,q})$.
\begin{theorem}[{see Theorem \ref{thm_kahler}}]\label{thm intro kahler}
Given a compact K\"ahler manifold $(M,g)$, it holds
\[
\ker \square^k_{p,q}=\ker\Delta_\delbar\cap\B^k_{p,q}.
\]
\end{theorem}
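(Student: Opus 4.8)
The plan is to prove the equality by a double inclusion, the substance lying in $\ker\square^k_{p,q}\subseteq\ker\Delta_\delbar\cap\B^k_{p,q}$. Throughout I would use the K\"ahler identities in the consequential form $\del^*\delbar+\delbar\del^*=0$, $\delbar^*\del+\del\delbar^*=0$ and $\Delta_\del=\Delta_\delbar$, together with the standard fact (from Theorem \ref{theorem_intro_decomp}) that on a compact manifold $\square^k_{p,q}\alpha=0$ if and only if $\delta^k_{p,q}\alpha=0$ and $(\delta^{k-1}_{p,q})^*\alpha=0$. The easy inclusion comes first: if every bidegree component of $\alpha$ is $\Delta_\delbar$-harmonic, then by the K\"ahler identities it is simultaneously $\del$-, $\delbar$-, $\del^*$- and $\delbar^*$-closed; since each differential and each formal adjoint of the Bigolin complex is assembled out of $\del,\delbar,\del^*,\delbar^*$, both $\delta^k_{p,q}\alpha=0$ and $(\delta^{k-1}_{p,q})^*\alpha=0$ follow, giving $\ker\Delta_\delbar\cap\B^k_{p,q}\subseteq\ker\square^k_{p,q}$.

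For the reverse inclusion I would treat the three structural regions of the complex separately. At the two central spaces $A^{p,q}$ and $A^{p+1,q+1}$ the coupled system $\delta\alpha=0$, $\delta^*\alpha=0$ unwinds, using that the adjoint of $(\eta,\zeta)\mapsto\del\eta+\delbar\zeta$ is $\alpha\mapsto(\del^*\alpha,\delbar^*\alpha)$ and that $(\del\delbar)^*=\delbar^*\del^*$, into exactly the Aeppli-harmonic condition $\{\del^*\alpha=\delbar^*\alpha=0,\ \del\delbar\alpha=0\}$ and the Bott--Chern-harmonic condition $\{\del\gamma=\delbar\gamma=0,\ \delbar^*\del^*\gamma=0\}$ respectively. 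On these two spaces the claim is therefore the classical identity $\ker\Delta_A=\ker\Delta_{BC}=\ker\Delta_\delbar$ valid on compact K\"ahler manifolds.

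For the lower part, that is the spaces strictly below $A^{p+1,q+1}$, the key observation is that the bidegree conditions $r\ge p+1$, $s\ge q+1$ are preserved by $d$, so this portion is a subcomplex of the de Rham complex, with $\delta^{k}_{p,q}=d$ and $(\delta^{k-1}_{p,q})^*=\pi\circ d^*$ for $\pi$ the orthogonal projection onto the relevant truncated space. Writing an element of $\B^k_{p,q}$ as a sum $\sum_i\beta_i$ of its bidegree components, I would read off from $d(\sum_i\beta_i)=0$ and $\pi(d^*(\sum_i\beta_i))=0$ the two families of relations $\del\beta_{m-1}+\delbar\beta_m=0$ and $\del^*\beta_{m+1}+\delbar^*\beta_m=0$. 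The crux is then a short computation: applying $\del^*$ to the first relation and substituting $\del^*\delbar=-\delbar\del^*$ and then the second relation yields $\del^*\del\beta_l+\delbar\delbar^*\beta_l=0$ for the interior indices, and pairing with $\beta_l$ gives $\lVert\del\beta_l\rVert^2+\lVert\delbar^*\beta_l\rVert^2=0$, whence $\del\beta_l=\delbar^*\beta_l=0$. Feeding this back into the two relation families forces the remaining components to be $\del$- and $\del^*$-closed as well, and $\Delta_\del=\Delta_\delbar$ upgrades $\del$-harmonicity to full $\delbar$-harmonicity of every $\beta_i$.

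For the upper part, the spaces strictly above $A^{p,q}$, which is the complementary quotient complex of the de Rham complex, I would either run the mirror-image energy computation or, more economically, transport the lower-part result via the Hodge-$*$ and conjugation isometries already used to establish the symmetry $h^k_{p,q}=h^{2n-k-1}_{n-q-1,n-p-1}$, since that isometry identifies the upper part at $(p,q)$ with a lower part at the dual bidegree, intertwines $\square^k_{p,q}$ with the dual operator, and carries $\Delta_\delbar$-harmonic forms to $\Delta_\delbar$-harmonic forms. The main obstacle is precisely the hard inclusion across the junctions between the central map $\del\delbar$ and the $d$-parts: there $(\delta^{k-1}_{p,q})^*$ carries the projection $\pi$, so $(\delta^{k-1}_{p,q})^*\alpha=0$ is genuinely weaker than $d^*\alpha=0$, and it is exactly the K\"ahler anticommutation relations that allow the energy identity to close up in spite of this truncation.
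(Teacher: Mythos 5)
Your proposal is correct and is essentially the paper's own argument: there too (Remarks \ref{rmk laplacian} and \ref{rmk laplacian kahler}) the hard inclusion follows from the K\"ahler anticommutation identities combined with an $L^2$-nonnegativity argument forcing every bidegree component of a $\square^k_{p,q}$-harmonic form to be $\Delta_\delbar$-harmonic, with Hodge-$*$ duality and conjugation covering the complementary range of $k$. The only differences are organizational: the paper packages the computation as an operator identity writing $\square^k_{p,q}$, for $\min(p,q)\le k<p+q$, as a sum of nonnegative operators built from $\Delta_\delbar$, $\delbar\delbar^*$ and $\del\del^*$ and then dualizes, whereas you manipulate the first-order system $\delta^k_{p,q}\alpha=0=(\delta^{k-1}_{p,q})^*\alpha$ directly on the mirror region $k>p+q+1$ (handling the two central degrees by the classical Aeppli/Bott--Chern identities) and dualize back.
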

The proof follows by a formula relating the Laplacians $\square^k_{p,q}$ and $\Delta_\delbar$ under the K\"ahler assumption, which was inspired by \cite[proof of Theorem 8.8]{HP}.
Apart from Theorem \ref{thm intro kahler}, the remaining content of Section \ref{sec_hodge}, including Theorem \ref{theorem_intro_decomp}, was more or less already known (cf. \cite[proofs of Corollaries B, D]{Ste}). Nevertheless, this content is not available elsewhere in the literature and thus we think that its presentation in these notes may be convenient for further studies.

In Section \ref{section compact} we deduce further cohomological relations involving Bigolin cohomology under the additional assumption of the compactness of the complex manifold and we use these relations to count, in any dimension, the number of the invariants $h^k_{p,q}$ which are, a priori and modulo their symmetries (conjugation, Hodge $*$ duality), interesting to study and different from Betti, Dolbeault, Aeppli and Bott-Chern numbers. This count is also fundamental in the next two sections for complex dimension 3.

In Section \ref{section double complex} we prove the main result of the paper. It is well known that the double complex $(A^{\bullet,\bullet},\del,\delbar)$ of a compact complex manifold decomposes into a direct sum of so-called squares and finitely many zigzags. The zigzags are the only components contributing to cohomology. More precisely, in \cite[Proposition 6 and Lemma 8]{Ste21} it is shown how every zigzag contributes to de Rham, Dolbeault, $\del$-, Aeppli and Bott-Chern cohomology. The analogue result for the Bigolin cohomology is Proposition \ref{prop zigzag contribute to bigolin}.

We consider a compact complex manifold of complex dimension 3 and classify 17 different shapes of zigzags (18 if we include the dot counting the connected components of the manifold) which exhaust all the possible zigzags modulo conjugation and duality. We then describe how these zigzags contribute to the Bigolin cohomology $H^k_{p,q}$ and are actually able to reverse the relation \lq\lq zigzags $\implies$ cohomology" as follows.
\begin{theorem}[see Theorem \ref{thm zigzags 3manifolds}]\label{thm intro zigzags 3manifolds}
Given a compact complex 3-manifold, the multiplicities of the zigzags in the direct sum decomposition of the double complex $(A^{\bullet,\bullet},\del,\delbar)$ are completely characterised by the Hodge numbers $h^{p,q}_\delbar$, Aeppli numbers $h^{p,q}_A$, Betti numbers $b^k$ 
and Bigolin numbers $h^k_{p,q}$ for all $p,q,k$.
\end{theorem}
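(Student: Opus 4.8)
The plan is to reduce the statement to a finite linear-algebra computation governed by the decomposition of the double complex into indecomposables. Recall that over $\C$ the bounded double complex $(A^{\bullet,\bullet},\del,\delbar)$ of a compact complex manifold splits, uniquely up to isomorphism, as a direct sum of \emph{squares} and \emph{zigzags}, and that squares are acyclic for every cohomology theory under consideration. Hence each numerical invariant appearing in the statement---the Betti numbers $b^k$, the Hodge numbers $h^{p,q}_\delbar$, the Aeppli numbers $h^{p,q}_A$, and the Bigolin numbers $h^k_{p,q}$---is a fixed $\N$-linear function of the multiplicities of the zigzags. Writing $m_Z$ for the multiplicity of a zigzag of shape $Z$, the content of the theorem is that the resulting linear map from the vector $(m_Z)_Z$ to the tuple of listed invariants is injective; equivalently, the multiplicities can be solved for in terms of those invariants. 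First I would fix this framework and record that, by uniqueness of the decomposition, it suffices to invert this one linear map.

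Second, I would make the list of shapes explicit. Since $n=3$, the zigzags live inside the triangle $\{(p,q):0\le p,q\le 3\}$, and up to conjugation $p\leftrightarrow q$ and the duality $(p,q)\leftrightarrow(n-p,n-q)$ there are only finitely many of them---the $17$ shapes of the statement (plus the length-one dot, giving $18$). For each shape I would tabulate its contribution to $b^k$, to $h^{p,q}_\delbar$ and $h^{p,q}_A$, and to the Bigolin numbers $h^k_{p,q}$. The first three columns are supplied by the contribution rules of \cite[Proposition 6 and Lemma 8]{Ste21}; the Bigolin column is supplied by Proposition \ref{prop zigzag contribute to bigolin}. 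Organising the shapes by their symmetry class under conjugation and duality keeps the table of manageable size and lets me use the symmetries $h^k_{p,q}=h^k_{q,p}$ of the invariants to avoid redundant equations.

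Third comes the inversion itself, which is the heart of the argument. I would order the zigzags by length (number of dots) and argue by elimination: the longest zigzags are pinned down by the invariants in the most extreme bidegrees, where shorter zigzags do not contribute; subtracting their known contributions leaves a smaller system, and one descends length by length. At each stage the point is that the relevant $\{0,1\}$-contribution matrix, restricted to the prescribed invariants, is unitriangular (hence invertible) after the symmetry reductions. This is where the Bigolin numbers become indispensable: there are pairs of zigzag shapes that contribute identically to all of the Betti, Hodge and Aeppli numbers and are separated only by their differing contributions to some $h^k_{p,q}$. Identifying exactly these pairs and checking that the Bigolin column distinguishes them is the crux, and it is also what makes the result sharp, since dropping the Bigolin (or the Hodge) numbers collapses two shapes and destroys injectivity.

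The main obstacle is therefore not any single deep idea but the combinatorial control of this finite system: one must enumerate the shapes without omission, compute every contribution correctly---in particular the Bigolin contributions, whose several degrees make the bookkeeping delicate---and then verify the triangular elimination, paying special attention to the small number of shape-pairs whose separation rests entirely on the Bigolin numbers.
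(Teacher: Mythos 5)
Your proposal is correct and follows essentially the same route as the paper: decompose the double complex into squares (acyclic) and the $17$ zigzag shapes modulo conjugation and duality, tabulate the contribution of each shape to $b^k$, $h^{p,q}_\delbar$, $h^{p,q}_A$ via \cite[Proposition 6, Lemma 8]{Ste21} and to $h^k_{p,q}$ via Proposition \ref{prop zigzag contribute to bigolin}, and then prove injectivity of the resulting linear map from multiplicities to invariants. The only difference is organizational: the paper establishes injectivity by directly computing that the $22\times 17$ contribution matrix $T$ of Table \ref{table 22x17} has rank $17$, rather than by your proposed length-ordered triangular elimination (note also that the matrix has entries equal to $2$, so it is not a $\{0,1\}$-matrix, though this does not affect the argument).
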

This result is sharp, meaning that if we remove either  the Bigolin numbers $h^k_{p,q}$ or the Hodge numbers $h^{p,q}_\delbar$ from the statement then it becomes false, as it is shown respectively in Remarks \ref{remark necessity Bigolin} and \ref{remark necessity of hodge numbers}. Note that Aeppli numbers $h^{p,q}_A$ and Betti numbers $b^k$ can be seen as a particular case of Bigolin numbers $h^k_{p,q}$ (cf. Section \ref{section_complex}). In particular this theorem answers, in the special case of complex dimension 3, the following question posed by Jonas Stelzig in a private communication: \lq\lq If we know the dimensions of all Bigolin cohomology groups, can we calculate the multiplicities of all zigzags?".
The techniques we used are new, and can similarly be extended in higher dimensions, although computational complexity rapidly grows. Preliminary computations seem to show that the same result of Theorem \ref{thm intro zigzags 3manifolds} does not directly generalise to higher dimension: it is likely that more cohomological invariants are needed to completely describe the multiplicities of zigzags on a general compact complex manifold, given the high rate of growth of the number of zigzags.
Note that in complex dimension 2 the double complexes have more restrictions, as explained in \cite[Remark 19]{Ste21}, therefore the situation is simpler.

In Section \ref{sec_iwasawa} we make use of recent results of \cite{Ste21,Ste,Ste23} in order to explicitly compute the Bigolin numbers $h^k_{p,q}$ of the small deformations of the Iwasawa manifold reducing the problem to a linear algebra calculation. The Dolbeault cohomology of these small deformations were computed in \cite{N}, while the Aeppli and Bott-Chern cohomology in \cite{A}. It has been proved that in this example Aeppli and Bott-Chern cohomology can distinguish between complex structures more precisely than Dolbeault cohomology, providing finer invariants. Here we show that, in this example, the Bigolin cohomology $H^{k}_{p,q}$ is  as powerful as Aeppli and Bott-Chern cohomology, in order to distinguish classes of complex structures. We then apply Theorem \ref{thm zigzags 3manifolds} to completely describe the double complexes of all the small deformations of the Iwasawa manifold.

Finally, Section \ref{sec_almost} is devoted to a partial generalisation of the elliptic complex \eqref{eq_big_complex} in the case of almost complex manifolds. We observe that there are elliptic complexes
\[
A^{0,0}\overset{d}{\longrightarrow}A^{0,1}\oplus A^{1,0}\overset{\del\oplus\delbar}{\longrightarrow} A^{1,1},
\]
\[
A^{n-1,n-1}\overset{d}{\longrightarrow}A^{n-1,n}\oplus A^{n,n-1}\overset{d}{\longrightarrow} A^{n,n},
\]
which provide new invariants of almost complex manifolds. The cohomology of these complexes are
\[
H^1_{\B}:=\frac{\ker \del\oplus\delbar\cap A^{0,1}\oplus A^{1,0}}{d A^{0,0}},
\]
\[
H^{2n-1}_{\B}:=\frac{\ker d\cap A^{n-1,n}\oplus A^{n,n-1}}{d A^{n-1,n-1}},
\]
and when the manifold is compact they have finite dimension $h^1_\B=h^{2n-1}_\B$, since they are isomorphic to the kernels of elliptic operators. To the knowledge of the author, the invariant $h^1_\B$ is the only finite invariant of the (non integrable) almost complex structure of a compact manifold which is \emph{cohomological} (it is the dimension of the cohomology space of an elliptic differential complex). We refer to \cite{PT4,ST} for two other finite invariants defined on $1$-forms which are only trivially cohomological \footnote{Trivially cohomological here means that one of the two maps of the complex defining the cohomology is zero.} \footnote{Another classical trivially cohomological invariant is $h^{p,0}_\delbar$, defined in the next paragraph.}.

This problem of finding finite invariants of compact almost complex manifolds which are cohomological is strictly related to the following problem of interest in the last few years.
Denote by $h^{p,q}_\delbar$ the dimension of the Dolbeault cohomology $H^{p,q}_\delbar\simeq\ker\Delta_\delbar\cap A^{p,q}$ of a compact complex manifold. Problem 20 by Kodaira and Spencer in \cite{Hi} basically asks: \lq\lq given a compact almost complex manifold $(M,J)$ endowed with an almost Hermitian metric $g$, is the number $h^{p,q}_\delbar:=\dim_\C\ker\Delta_\delbar\cap A^{p,q}$ metric independent (as in the case of complex manifolds)? In case it is not independent, give a generalisation of the $h^{p,q}_\delbar$ of a complex manifold depending only on $(M,J)$." The first part of the problem has been solved in \cite{HZ} and later in \cite{TT}, where it is shown that $h^{p,q}_\delbar$ depends on the metric. Concerning the second part of the problem, in \cite{CW} a Dolbeault cohomology for almost complex manifolds has been defined, but in \cite{CPS} it is shown that it can be infinite dimensional even when $M$ is compact. Different Aeppli and Bott-Chern-like finite dimensional (when $M$ is compact) invariants have been defined as the dimension of the kernel of elliptic operators in \cite{PT4,ST}, but in general they depend on the metric.

Given a compact manifold, Hodge theory provides an isomorphism between the kernel of an elliptic operator, defined via the combination of a metric independent elliptic complex and a metric, and the cohomology of the complex itself. In this way, a metric independent elliptic complex gives rise to metric independent and finite invariants of the compact manifold. However, the problem on almost complex manifolds is that the conditions $\del^2\ne0$, $\delbar^2\ne0$, $\del\delbar+\delbar\del\ne0$ drastically reduce the number of elliptic complexes. For this reason $h^1_\B$, being defined via a metric independent elliptic complex, is a notable exception.

\medskip
\noindent{\em Acknowledgements.}
I thank Adriano Tomassini and Lorenzo Sillari for useful discussions. Furthermore, I am sincerely grateful to Jonas Stelzig for many interesting remarks on the topic and for explaining me some results in \cite{Ste22} which have been used in Remarks \ref{remark necessity Bigolin} and \ref{remark necessity of hodge numbers}.

\section{The complex}\label{section_complex}

\begin{definition}[The Bigolin complex]
Given a complex manifold $M^n$ of complex dimension $n$, for each bidegree of integers $(p,q)$ with $-1\le p,q\le n$, we define a complex $(\B^\bullet_{p,q},\delta^\bullet_{p,q})$ of differential forms. Denoting by $A^{r,s}$ the space of $(r,s)$-forms, and by $d$ the exterior derivative, for every $k\in\Z$ we set
\begin{align*}
&\B_{p,q}^k:=\bigoplus_{\substack{r+s=k\\ 0\le r\le p\\0\le s\le q}} A^{r,s}& k\le p+q,\\
&\B_{p,q}^k:=\bigoplus_{\substack{r+s=k+1\\ p< r\le n\\ q< s\le n}} A^{r,s}& k> p+q,
\end{align*}
with differentials $\delta^k_{p,q}:\B_{p,q}^k\to\B_{p,q}^{k+1}$ defined by
\begin{align*}
&\delta^k_{p,q}=\pi_{\B_{p,q}^{k+1}}\circ d& k< p+q,\\
&\delta^k_{p,q}=\del\delbar& k= p+q,\\
&\delta^k_{p,q}=d& k> p+q,
\end{align*}
where $\pi_{\B_{p,q}^{k}}$ is the natural projection from $A^\bullet_\C:=A^\bullet\otimes\C$ to $\B_{p,q}^{k}$. The complex $(\B^\bullet_{p,q},\delta^\bullet_{p,q})$ will be referred to as the \emph{Bigolin complex}.

Note that our definition of $(\B^\bullet_{p,q},\delta^\bullet_{p,q})$ is shifted in the indexes $(p,q)$ with respect to the definitions of \cite{S,De,Ste}: denoting by $(\mathcal{L}^\bullet_{p,q},d^\bullet_{p,q})$ the complex defined in these papers, it holds $(\mathcal{L}^\bullet_{p+1,q+1},d^\bullet_{p+1,q+1})=(\B^\bullet_{p,q},\delta^\bullet_{p,q})$. This shift in the index is motivated by the fact that, thanks to duality (cf. Remark \ref{remark duality}), we are interested in studying only the first part of the complex $(\B^\bullet_{p,q},\delta^\bullet_{p,q})$, for $k\le p+q$.
\end{definition}

\begin{remark}[The range of $k$]
Depending on the values of $(p,q)$, there are slightly different ranges for $k$ for which the space $\B^k_{p,q}$ is non-zero:
\begin{itemize}
\item if $0\le p,q\le n-1$, then $\B^k_{p,q}\ne\{0\}$ for  $0\le k\le 2n-1$;
\item if $p=q=-1$, then $\B^k_{p,q}\ne\{0\}$ for $-1\le k\le 2n-1$;
\item if $p=q=n$, then $\B^k_{p,q}\ne\{0\}$ for $0\le k\le 2n$;
\item if $\min(p,q)=-1<\max(p,q)$, then $\B^k_{p,q}\ne\{0\}$ for $\max(p,q)\le k\le 2n-1$;
\item if $\max(p,q)=n>\min(p,q)$, then $\B^k_{p,q}\ne\{0\}$ for $0\le k\le p+q$.
\end{itemize}
Note that in the extremal cases $p=q\in\{-1,n\}$ the complex $(\B^\bullet_{p,q},\delta^\bullet_{p,q})$ just defined actually coincides with the de Rham complex $(A^\bullet,d)$. For this reason, in the rest of the paper we will generally focus on the case $0\le k\le 2n-1$ and $p+q\notin\{-2,2n\}$.
\end{remark}

\begin{remark}[Graphic representation of the complex]\label{remark_graphic_rep}
We provide some graphic explanation of how this complex is defined.
If $n=10$, $p=4$ and $q=7$, we visualise the complex as follows:
\begin{center}
\begin{tikzpicture}[scale=0.5]
    \foreach \x in {0,1,...,10}{
    \draw (\x,0)node[below,font=\footnotesize] {\x};
    }
    \foreach \y in {0,1,...,10}{
    \draw (0,\y)node[left,font=\footnotesize] {\y};
    }
    \foreach \x in {0,1,...,10}{
    \foreach \y in {0,1,...,10}{
    \fill[black!30] (\x,\y) circle[radius=1pt];
    }}
    \foreach \x in {0,1,...,4}{
    \foreach \y in {0,1,...,7}{
    \fill[black!80] (\x,\y) circle[radius=2pt];
    }}
    \foreach \x in {5,6,...,10}{
    \foreach \y in {8,9,...,10}{
    \fill[black!80] (\x,\y) circle[radius=2pt];
    }}
\end{tikzpicture}\\
\begin{tikzpicture}[scale=1]
    \draw (0,0)node[below,font=\footnotesize] {Figure 1: the complex $(\B^\bullet_{4,7},\delta^\bullet_{4,7})$ for $n=10$.};
\end{tikzpicture}
\end{center}
Each dot in the diagram corresponds to a space $A^{r,s}$, where $r$ varies in the horizontal axis and $s$ in the vertical axis. The operators $\del$ and $\delbar$ map a point with coordinates $(r,s)$ to points with coordinates respectively $(r+1,s)$ and $(r,s+1)$.
In the picture we can see two rectangles made of thick points. For $0\le k\le 11$, the space $\B_{4,7}^k$ corresponds to all the thick points (in the bottom-left rectangle) with coordinates $(r,s)$ such that $r+s=k$. On the other hand, for $12\le k\le 19$, the space $\B_{7,4}^k$ corresponds to all the thick points (in the top-right rectangle) with coordinates $(r,s)$ such that $r+s=k+1$.
For instance, we see that $\B_{4,7}^9=A^{2,7}\oplus A^{3,6}\oplus A^{4,5}$ and $\B_{7,4}^{13}=A^{5,9}\oplus A^{6,8}$.
\end{remark}

\begin{remark}[Explicit formulation of $\delta^k_{p,q}$ for $k<p+q$]\label{remark_differential}
It is immediate from the graphic description that for $0\le k<\min(p,q)$ the differential $\delta^k_{p,q}$ equals the exterior derivative $d$. 
If $\min(p,q)=p\le k< q=\max(p,q)$, then the differential $\delta^k_{p,q}$ acts on
\[
\B^{k}_{p,q}=A^{0,k}\oplus\dots\oplus A^{k-p-1,p+1}\oplus A^{k-p,p}
\]
component by component as
\[
\delta^k_{p,q}= d\oplus\dots\oplus d\oplus\delbar.
\]
On the other hand, if $\min(p,q)=q\le k< p=\max(p,q)$, then the differential $\delta^k_{p,q}$ acts on
\[
\B^{k}_{p,q}=A^{k-q,q}\oplus A^{k-q-1,q+1}\oplus\dots\oplus A^{k-p,p}
\]
component by component as
\[
\delta^k_{p,q}= \del\oplus d\oplus\dots\oplus d.
\]
Finally, if $\max(p,q)\le k< p+q$, then the differential $\delta^k_{p,q}$ acts on
\[
\B^{k}_{p,q}=A^{k-q,q}\oplus A^{k-q+1,q-1}\oplus\dots\oplus A^{p-1,k-p+1}\oplus A^{p,k-p}
\]
component by component as
\[
\delta^k_{p,q}=\del\oplus d\oplus\dots\oplus d\oplus \delbar.
\]
\end{remark}

\begin{definition}[Bigolin cohomology]
It is easy to check that $(\B^\bullet_{p,q},\delta^\bullet_{p,q})$ is actually a complex, that is $\delta_{p,q}^{k}\circ\delta_{p,q}^{k-1}=0$. Therefore, we can define the cohomology spaces
\[
H^{k}_{p,q}:=\frac{\ker \delta^k_{p,q}}{\im \delta^{k-1}_{p,q}}.
\]
This will be referred to as the \emph{Bigolin cohomology}.
\end{definition}

We now examine some relations of Bigolin cohomology with itself and with other cohomology spaces which directly follow from their definitions.

\begin{remark}[Conjugation]
Observe that conjugation 
\begin{align*}
A^{r,s}&\to A^{s,r}\\
\alpha&\mapsto\c\alpha
\end{align*} 
induces an isomorphism on cohomology, namely for all $k$
\[
H^k_{p,q}\simeq H^k_{q,p}.
\]
\end{remark}

\begin{remark}[Aeppli and Bott-Chern cohomologies]
We see that for $k=p+q$ this space coincides with the Aeppli cohomology
\[
H^{p+q}_{p,q}=H^{p,q}_A:=\frac{\ker \del\delbar\cap A^{p,q}}{\del A^{p-1,q}+\delbar A^{p,q-1}},
\]
while for $k=p+q+1$ it equals the Bott-Chern cohomology
\[
H^{p+q+1}_{p,q}=H^{p+1,q+1}_{BC}:=\frac{\ker d\cap A^{p+1,q+1}}{\del \delbar A^{p,q}}.
\]
\end{remark}

\begin{proposition}[De Rham and Dolbeault cohomologies]\label{proposition cohomology}
Given a complex manifold $M^n$ of complex dimension $n$, then
\begin{enumerate}
\item if $k<\min(p,q)$, then $H^{k}_{p,q}= H^{k}_{dR}$;
\item if $k>n+\max(p,q)$ , then $H^{k}_{p,q}= H^{k+1}_{dR}$;
\item if $k=\min(p,q)$, then $H^{k}_{p,q}\supseteq H^{k}_{dR}$;
\item if $0\le k\le n$, then $H^k_{0,n}=H_\delbar^{0,k}$;
\item if $0\le q\le n-1$ and $0\le k\le q-1$, then $H^k_{0,q}=H_\delbar^{0,k}$;
\item if $n-1\le k\le 2n-1$, then $H^k_{n-1,-1}=H_\delbar^{n,k+1-n}$;
\item if $0\le q\le n-2$ and $n+q+1\le k\le 2n-1$, then $H^k_{n-1,q}=H_\delbar^{n,k+1-n}$.
\end{enumerate}
By conjugation, similar relations can be obtained replacing the Dolbeault cohomology with the $\del$ cohomology.
\end{proposition}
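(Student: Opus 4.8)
The plan is to prove all seven relations by a direct computation from the definition of $(\B^\bullet_{p,q},\delta^\bullet_{p,q})$: in each prescribed range of $k$ I will read off the spaces $\B^k_{p,q}$ and the differentials $\delta^k_{p,q}$ (using the explicit component-by-component descriptions of Remark \ref{remark_differential}) and then recognise the resulting complex, in the relevant degrees, as a truncation of the complexified de Rham complex, of a degree-shift of it, or of a single-row/single-column Dolbeault complex. No idea beyond careful bookkeeping of the index constraints is needed, which is why the statement follows \lq\lq directly from the definitions".

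For (1), (2) and (3) the target is de Rham cohomology. If $k<\min(p,q)$, then every decomposition $r+s=k$ with $r,s\ge 0$ automatically satisfies $r\le p$ and $s\le q$, so $\B^k_{p,q}$ is the full space $A^k$ of complex $k$-forms; the same holds for $\B^{k\pm1}_{p,q}$, and since $k+1\le\min(p,q)\le p+q$ the projection in $\delta^{k}_{p,q}=\pi_{\B^{k+1}_{p,q}}\circ d$ is the identity, so $\delta^{k}_{p,q}=d$. Thus in this range the Bigolin complex coincides with the complexified de Rham complex and $H^k_{p,q}=H^k_{dR}$. Statement (2) is the mirror computation in the upper rectangle: for $k>n+\max(p,q)$ the constraints $p<r$, $q<s$ become vacuous, so $\B^k_{p,q}=A^{k+1}$ and $\B^{k-1}_{p,q}=A^{k}$ are both full and both differentials equal $d$ — the \emph{strict} inequality is precisely what forces $\B^{k-1}_{p,q}$ to be full while keeping $k-1>p+q$, so that $\delta^{k-1}_{p,q}=d$ rather than $\del\delbar$ — whence the complex is the de Rham complex shifted by one degree and $H^k_{p,q}=H^{k+1}_{dR}$. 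For (3), at the threshold $k=\min(p,q)$ the space $\B^k_{p,q}$ is still full and $\delta^{k-1}_{p,q}=d$, but now $\delta^{k}_{p,q}$ drops exactly one component (either the $\del$- or the $\delbar$-part leaving the rectangle). Since a closed form is annihilated by $\delta^k_{p,q}$ in either regime, $\ker\delta^k_{p,q}\supseteq\ker d\cap A^k$ while $\im\delta^{k-1}_{p,q}=d\,A^{k-1}$; the inclusion of closed forms therefore induces an injection $H^k_{dR}\hookrightarrow H^k_{p,q}$, which is the asserted $\supseteq$.

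For (4)--(7) the target is Dolbeault cohomology, and the mechanism is that the rectangle degenerates to a single line. When $p=0$ (cases (4) and (5)) the lower rectangle reduces to the column $r=0$, so $\B^k_{0,q}=A^{0,k}$ and $\pi_{\B^{k+1}_{0,q}}\circ d$ retains only the $\delbar$-component; hence the complex is the Dolbeault complex $A^{0,\bullet}$ and $H^k_{0,q}=H^{0,k}_\delbar$. The range $k\le q-1$ in (5) keeps us strictly below the Aeppli degree $k=p+q=q$; in (4), where $q=n$, the range reaches $k=n$ because at top antiholomorphic degree $\delbar$ vanishes and one checks directly that $H^{0,n}_A=A^{0,n}/\delbar A^{0,n-1}=H^{0,n}_\delbar$. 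Symmetrically, when $q=-1$ (case (6)) or $p=n-1$ (cases (6) and (7)) the upper rectangle reduces to the row $r=n$, on which $d=\delbar$, so the complex becomes the Dolbeault complex $A^{n,\bullet}$ and $H^k_{n-1,q}=H^{n,k+1-n}_\delbar$; here the restriction $k\ge n+q+1$ keeps $k-1$ strictly above the degree $p+q$, where $\delta^{k-1}_{p,q}=\del\delbar$ would otherwise interfere.

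The only genuine obstacle is the boundary bookkeeping: for each case one must pin down exactly the value of $k$ at which a rectangle constraint becomes vacuous (so that $\pi$ turns into the identity, or the stray $\del$- resp. $\delbar$-term disappears) and must correctly separate the three regimes $k<p+q$, $k=p+q$, $k>p+q$ of the differential, since an off-by-one there would replace a Dolbeault group by an Aeppli or Bott-Chern group. Finally, the $\del$-cohomology versions mentioned at the end follow formally by applying conjugation, which sends $(r,s)\mapsto(s,r)$, exchanges $\del$ and $\delbar$, and induces $H^k_{p,q}\simeq H^k_{q,p}$.
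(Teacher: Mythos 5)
Your proposal is correct and takes essentially the same approach as the paper: a direct verification from the definitions that, in each stated range of $k$, the spaces $\B^k_{p,q}$ and differentials $\delta^k_{p,q}$ coincide with those of the complexified de Rham complex, its shift by one degree, or the row/column Dolbeault complex $A^{0,\bullet}$ resp. $A^{n,\bullet}$. Your write-up is in fact more detailed than the paper's own proof, which only records the four key identifications ($\B^k_{p,q}=A^k_\C$ and $\delta^k_{p,q}=d$ in the appropriate ranges, and their duals) for points (1)--(3) and states that points (4)--(7) \lq\lq follow similarly, checking the definitions".
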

\begin{proof}
The first three points follow directly from the definitions, since
\begin{itemize}
\item $\B^k_{p,q}=A^k_\C$ for $k\le\min(p,q)$;
\item $\delta^k_{p,q}=d_{|A^k_\C}$ for $k<\min(p,q)$;
\item $\B^k_{p,q}=A^{k+1}_\C$ for $k\ge 2n-1-\min(n-p-1,n-q-1)=n+\max(p,q)$;
\item $\delta^k_{p,q}=d_{|A^{k+1}_\C}$ for $k\ge n+\max(p,q)$.
\end{itemize}
The last four points follow similarly, checking the definitions.
\end{proof}
It will follow from an explicit computation in Section \ref{sec_iwasawa} that in general the inclusion \emph{(3)} is not an equality. 

There are some other relations of the Bigolin cohomology with itself; by conjugation, it is enough to consider the case $p\le q$.

\begin{proposition}[Relations of Bigolin cohomology]\label{proposition bigolin cohomology}
Given a complex manifold $M^n$ of complex dimension $n$, then
\begin{enumerate}
\item if $p\le k\le q-2$, then $H^k_{p,q}=H^k_{p,q-1}$;
\item if $n+p+2\le k\le 2n-1$, then $H^k_{p,q}=H^k_{p+1,q}$.
\end{enumerate}
\end{proposition}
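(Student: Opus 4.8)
The plan is to prove both equalities by the same elementary mechanism: in the prescribed range of $k$, the two Bigolin complexes being compared coincide term by term \emph{and} map by map on the three consecutive spots $\B^{k-1}_{p,q}$, $\B^{k}_{p,q}$, $\B^{k+1}_{p,q}$, and hence have the same cohomology in degree $k$. Since $H^k_{p,q}=\ker\delta^k_{p,q}/\im\delta^{k-1}_{p,q}$ only sees $\delta^{k-1}$ and $\delta^k$, and each of these is either $d$ or a projection $\pi\circ d$ into the next space, once I know the three spaces above agree for the two index choices the differentials must agree as well (the only possible discrepancy, a different target of the projection, has thereby been ruled out). So the whole argument reduces to comparing the spaces $\B^j$ at $j=k-1,k,k+1$.

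For (1) I assume $p\le q$ and $p\le k\le q-2$, so I am in the first part of the complex, where $\B^{j}$ is the sum of the $A^{r,s}$ with $r+s=j$, $0\le r\le p$ and $0\le s\le q$. Decreasing $q$ to $q-1$ only deletes the summands with $s=q$, and such a summand lives in total degree $j=r+q\ge q$. First I would note that for every $j\in\{k-1,k,k+1\}$ one has $j\le k+1\le q-1<q$, so no summand with $s=q$ can occur; hence $\B^{j}_{p,q}=\B^{j}_{p,q-1}$ for these three values of $j$, and by the remark above $H^{k}_{p,q}=H^{k}_{p,q-1}$.

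For (2), with $n+p+2\le k\le 2n-1$ I am past the centre, where $\B^{j}$ is the sum of the $A^{r,s}$ with $r+s=j+1$, $p<r\le n$ and $q<s\le n$, and $\delta^{j}=d$. Now increasing $p$ to $p+1$ only deletes the summands with $r=p+1$, and such a summand has $s=j-p$, which forces $s\le n$, i.e. $j\le n+p$. For $j\in\{k-1,k,k+1\}$ the smallest value is $j=k-1\ge n+p+1>n+p$, so no summand with $r=p+1$ can occur and $\B^{j}_{p,q}=\B^{j}_{p+1,q}$ for these three $j$. Both differentials equal $d$ (this uses $j>p+q$ and $j>(p+1)+q$, valid once $q<n$; the leftover case $q=n$ is harmless, since then both complexes already vanish throughout the range $k\ge n+p+2$), so $H^{k}_{p,q}=H^{k}_{p+1,q}$.

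The calculations are routine bookkeeping; the only point that needs genuine care — and the reason the thresholds read $k\le q-2$ and $k\ge n+p+2$ rather than the naive $k\le q-1$ and $k\ge n+p+1$ — is that the degree-$k$ cohomology also depends on a neighbouring space. In (1) it is the upper neighbour $\B^{k+1}$ that must still be free of an $s=q$ summand, forcing $k+1<q$; in (2) it is the lower neighbour $\B^{k-1}$ that must be free of an $r=p+1$ summand, forcing $k-1>n+p$. Inspecting the boundary values $k=q-1$ and $k=n+p+1$, where the offending summand reappears and the corresponding differential genuinely changes the kernel, would confirm that these thresholds are sharp.
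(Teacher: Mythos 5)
Your proof is correct and takes essentially the same route as the paper's: the paper's proof consists precisely of the observations that $\B^k_{p,q}=\B^k_{p,q-1}$ for $0\le k\le q-1$ and $\delta^k_{p,q}=\delta^k_{p,q-1}$ for $0\le k\le q-2$ (resp. $\B^k_{p,q}=\B^k_{p+1,q}$ and $\delta^k_{p,q}=\delta^k_{p+1,q}$ for $n+p+1\le k\le 2n-1$), i.e.\ that the two complexes coincide, spaces and differentials, in the window of degrees relevant to $H^k$. Your additional care --- deducing equality of differentials from equality of the three spaces via their description as $\pi\circ d$ or $d$, and handling the degenerate case $q=n$ in part (2), where the complexes both vanish in degrees $k\ge n+p+2$ --- only makes explicit what the paper's terse bullet-point proof leaves implicit.
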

\begin{proof}
It follows from the observation that
\begin{itemize}
\item $\B^k_{p,q}=\B^k_{p,q-1}$ for $0\le k\le q-1$;
\item $\delta^k_{p,q}=\delta^k_{p,q-1}$ for $0\le k\le q-2$;
\item $\B^k_{p,q}=\B^k_{p+1,q}$ for $n+p+1\le k\le 2n-1$;
\item $\delta^k_{p,q}=\delta^k_{p+1,q}$ for $n+p+1\le k\le 2n-1$.\qedhere
\end{itemize}
\end{proof}

Finally, we observe that it is possible to partially generalise a map between cohomologies and an algebra structure from the Aeppli-Bott-Chern case to the Bigolin case.

\begin{remark}[Map between cohomologies]\label{rmk map between cohomologies}
We recall that there is always a map between Bott-Chern and Aeppli cohomologies induced by the identity
\[
H^{p,q}_{BC}\to H^{p,q}_A.
\]
We can generalise this map as follows. If $0\le p,q\le n$, $p+q< k\le 2n-2$ and $k-q,k-p\le n$, then
\[
\B^k_{p,q}=\B^{k+1}_{k-q,k-p}.
\]
We may see the equality $\B^5_{1,2}=\B^6_{3,4}=A^{2,4}\oplus A^{3,3}$ for $n=5$ as follows.
\begin{align*}
\begin{tikzpicture}[scale=0.5]
    \foreach \x in {0,1,...,5}{
    \draw (\x,0)node[below,font=\footnotesize] {\x};
    }
    \foreach \y in {0,1,...,5}{
    \draw (0,\y)node[left,font=\footnotesize] {\y};
    }
    \foreach \x in {0,1,...,5}{
    \foreach \y in {0,1,...,5}{
    \fill[black!30] (\x,\y) circle[radius=1pt];
    }}
    \foreach \x in {0,1,...,1}{
    \foreach \y in {0,1,...,2}{
    \fill[black!80] (\x,\y) circle[radius=2pt];
    }}
    \foreach \x in {2,3,...,5}{
    \foreach \y in {3,4,...,5}{
    \fill[black!80] (\x,\y) circle[radius=2pt];
    }}
\end{tikzpicture}
& &
\begin{tikzpicture}[scale=0.5]
    \foreach \x in {0,1,...,5}{
    \draw (\x,0)node[below,font=\footnotesize] {\x};
    }
    \foreach \y in {0,1,...,5}{
    \draw (0,\y)node[left,font=\footnotesize] {\y};
    }
    \foreach \x in {0,1,...,5}{
    \foreach \y in {0,1,...,5}{
    \fill[black!30] (\x,\y) circle[radius=1pt];
    }}
    \foreach \x in {0,1,...,3}{
    \foreach \y in {0,1,...,4}{
    \fill[black!80] (\x,\y) circle[radius=2pt];
    }}
    \foreach \x in {4,5,...,5}{
    \foreach \y in {5}{
    \fill[black!80] (\x,\y) circle[radius=2pt];
    }}
\end{tikzpicture}
\end{align*}
It is then straightforward to verify that the following map, induced by the identity in cohomology, is well defined
\[
H^{k}_{p,q}\to H^{k+1}_{k-q,k-p}.
\]
\end{remark}

\begin{remark}[Algebra structure]\label{rmk algebra structure}
It is well known that the wedge multiplication induces a map
\[
H^{p,q}_{BC}\times H^{r,s}_A\to H^{p+r,q+s}_A.
\]
For the cohomology $H^k_{p,q}$, we observe that the wedge multiplication induces the following map:
\[
H^{k}_{p,q}\times H^{l}_{r,s}\to H^{k+l+1}_{p+r,q+s},
\]
for $k>p+q$ and $l>r+s$.
\end{remark}

\section{Hodge Theory}\label{sec_hodge}
One can see that $(\B^\bullet_{p,q},\delta^\bullet_{p,q})$ is an elliptic complex, as it is proved in \cite[Lemma 2.4]{Ste} (cf. \cite[Section 1]{V}). Therefore, once we endow our complex manifold $M^n$ with a Hermitian metric $g$, we can define elliptic operators which are naturally associated to the complex, defined as follows (cf. \cite[Section 1]{V}, \cite[proof of Corollary D]{Ste}, \cite[Sections 2, 10]{HP}).

\begin{remark}[Hodge * duality]
We recall that the Hodge * operator is a $\C$-linear isomorphism between $A^{r,s}$ and $A^{n-s,n-r}$. Then * induces an isomorphism between the spaces
\[
*:\B^{k}_{p,q}\simeq \B^{2n-k-1}_{n-q-1,n-p-1}.
\]
For the values $n=10$, $p=4$ and $q=7$, the following diagram represents the complex $(\B^\bullet_{n-q-1,n-p-1},\delta^\bullet_{n-q-1,n-p-1})=(\B^\bullet_{2,5},\delta^\bullet_{2,5})$, where the isomorphism becomes evident: it is a reflection of Figure 1 with respect to the diagonal $r+s=n$, where $r$ is the horizontal coordinate and $s$ is the vertical coordinate.
\medskip
\begin{center}
\begin{tikzpicture}[scale=0.5]
    \foreach \x in {0,1,...,10}{
    \draw (\x,0)node[below,font=\footnotesize] {\x};
    }
    \foreach \y in {0,1,...,10}{
    \draw (0,\y)node[left,font=\footnotesize] {\y};
    }
    \foreach \x in {0,1,...,10}{
    \foreach \y in {0,1,...,10}{
    \fill[black!30] (\x,\y) circle[radius=1pt];
    }}
    \foreach \x in {0,1,...,2}{
    \foreach \y in {0,1,...,5}{
    \fill[black!80] (\x,\y) circle[radius=2pt];
    }}
    \foreach \x in {3,4,...,10}{
    \foreach \y in {6,7,...,10}{
    \fill[black!80] (\x,\y) circle[radius=2pt];
    }}
\end{tikzpicture}\\
\begin{tikzpicture}[scale=1]
    \draw (0,0)node[below,font=\footnotesize] {Figure 2: the complex $(\B^\bullet_{2,5},\delta^\bullet_{2,5})$ for $n=10$.};
\end{tikzpicture}
\end{center}
\end{remark}

\begin{definition}[Codifferential]
Denote by
\[
(\delta^k_{p,q})^*:\B^{k+1}_{p,q}\to\B^k_{p,q}
\]
the formal adjoint of $\delta^k_{p,q}$, which we may also call codifferential. By applying the usual argument involving the Stokes theorem, it is possible to show
\[
(\delta_{p,q}^k)^*=-*\delta_{n-q-1,n-p-1}^{2n-k-1}*.
\]
In Remark \ref{remark_differential} we have a description of $\delta^k_{p,q}$ for $0\le k<p+q$. For $p+q<k\le 2n-1$, a similar description holds for the codifferential $(\delta^k_{p,q})^*$: in fact
\[
(\delta^k_{p,q})^*=\pi_{\B^k_{p,q}}\circ d^*.
\]
On the other hand, the codifferential for $0\le k<p+q$ is simpler, since
\[
(\delta^k_{p,q})^*=d^*.
\]
\end{definition}

\begin{definition}[Elliptic operators associated to the complex] We now define elliptic and formally self adjoint differential operators 
\[
\square^k_{p,q}:\B^k_{p,q}\to\B^k_{p,q}.
\]
We set, for every integer $0\le k\le 2n-1$ and $k\ne p+q,p+q+1$,
\[
\square^k_{p,q}:=(\delta^k_{p,q})^*\delta^k_{p,q}+\delta^{k-1}_{p,q}(\delta^{k-1}_{p,q})^*,
\]
which is a differential operator of order 2, while, for $k=p+q$,
\[
\square^k_{p,q}:=(\delta^k_{p,q})^*\delta^k_{p,q}+(\delta^{k-1}_{p,q}(\delta^{k-1}_{p,q})^*)^2=:\square_A,
\]
and, for $k=p+q+1$,
\[
\square^k_{p,q}:=((\delta^k_{p,q})^*\delta^k_{p,q})^2+\delta^{k-1}_{p,q}(\delta^{k-1}_{p,q})^*=:\square_{BC},
\]
which are of order 4. We refer to \cite{HP} for the terminology $\square_A$ and $\square_{BC}$, which is related to Aeppli and Bott-Chern cohomology/harmonic forms.
\end{definition}

When the Hermitian manifold $(M,g)$ is compact, by the theory of elliptic operators/complexes (cf. \cite{AB}, \cite{V}) we obtain the following fundamental result.

\begin{theorem}[Hodge decomposition]\label{theorem hodge decomposition}
Given a compact Hermitian manifold $(M,g)$, then the spaces of harmonic forms
\[
\H^k_{p,q}:=\ker\square^k_{p,q}=\ker\delta^k_{p,q}\cap\ker(\delta^{k-1}_{p,q})^*
\]
have finite dimension $h^k_{p,q}$ and they provide a $L^2$ orthogonal decomposition
\[
\B^k_{p,q}=\H^k_{p,q}\oplus\delta^{k-1}_{p,q}\B^{k-1}_{p,q}\oplus(\delta^{k}_{p,q})^*\B^{k+1}_{p,q}.
\]
\end{theorem}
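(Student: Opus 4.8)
The plan is to reduce everything to the abstract theory of self-adjoint elliptic operators and then carry out a short algebraic bookkeeping. Since the complex $(\B^\bullet_{p,q},\delta^\bullet_{p,q})$ is elliptic \cite[Lemma 2.4]{Ste} and each $\square^k_{p,q}$ is, by construction, elliptic and formally self-adjoint, the standard elliptic package on a compact manifold (cf.\ \cite{AB,V}) applies verbatim: $\ker\square^k_{p,q}$ is finite-dimensional, consists of smooth forms, and one has the $L^2$-orthogonal splitting $\B^k_{p,q}=\ker\square^k_{p,q}\oplus\im\square^k_{p,q}$, where $\im\square^k_{p,q}$ is precisely the space of smooth forms $L^2$-orthogonal to the (finite-dimensional, smooth) harmonic space. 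This already yields the finiteness of $h^k_{p,q}$. It then remains to identify $\ker\square^k_{p,q}$ and to refine $\im\square^k_{p,q}$, both purely formal manipulations; to lighten notation I write $\delta=\delta^k_{p,q}$ and $\delta'=\delta^{k-1}_{p,q}$.

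First I would identify the kernel. In the generic range $k\ne p+q,p+q+1$ the operator is the second-order Laplacian $\delta^*\delta+\delta'(\delta')^*$, and pairing $\square^k_{p,q}\alpha$ with $\alpha$ gives $\langle\square^k_{p,q}\alpha,\alpha\rangle=\lVert\delta\alpha\rVert^2+\lVert(\delta')^*\alpha\rVert^2$, so $\square^k_{p,q}\alpha=0$ forces $\delta\alpha=0$ and $(\delta')^*\alpha=0$; the converse is immediate. The two exceptional spots require slightly more care and constitute the main obstacle. For $k=p+q$ one has $\square_A=\delta^*\delta+(\delta'(\delta')^*)^2$; using that $\delta'(\delta')^*$ is self-adjoint, the pairing becomes $\lVert\delta\alpha\rVert^2+\lVert\delta'(\delta')^*\alpha\rVert^2$, whence $\delta\alpha=0$ and $\delta'(\delta')^*\alpha=0$, and a second pairing converts the latter into $\lVert(\delta')^*\alpha\rVert^2=0$. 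The Bott--Chern spot $k=p+q+1$, with $\square_{BC}=(\delta^*\delta)^2+\delta'(\delta')^*$, is handled symmetrically, the self-adjoint factor now being $\delta^*\delta$. In all three cases $\ker\square^k_{p,q}=\ker\delta\cap\ker(\delta')^*$, as asserted.

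Finally I would refine the image. Reading off the defining formulas, every $\square^k_{p,q}\alpha$ lies in $\delta'\B^{k-1}_{p,q}+\delta^*\B^{k+1}_{p,q}$: in the order-four cases the squared factor $(\delta'(\delta')^*)^2$ is still $\delta'$-exact and $(\delta^*\delta)^2$ is still $\delta^*$-exact, so no new image components appear. The summands $\delta'\B^{k-1}_{p,q}$ and $\delta^*\B^{k+1}_{p,q}$ are mutually $L^2$-orthogonal because $\delta\circ\delta'=0$ gives $\langle\delta'a,\delta^*b\rangle=\langle\delta\delta'a,b\rangle=0$, and each is orthogonal to the harmonic space by the kernel description (e.g.\ $\langle\delta'a,\eta\rangle=\langle a,(\delta')^*\eta\rangle=0$ for $\eta\in\ker(\delta')^*$, and dually $\langle\delta^*b,\eta\rangle=\langle b,\delta\eta\rangle=0$ for $\eta\in\ker\delta$). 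Hence both images consist of smooth forms orthogonal to $\ker\square^k_{p,q}$ and therefore sit inside $\im\square^k_{p,q}$, producing the sandwich $\im\square^k_{p,q}\subseteq\delta'\B^{k-1}_{p,q}+\delta^*\B^{k+1}_{p,q}\subseteq(\ker\square^k_{p,q})^\perp=\im\square^k_{p,q}$. Equality throughout, together with the mutual orthogonality, gives the claimed three-term $L^2$-orthogonal decomposition.
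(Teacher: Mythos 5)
Your proposal is correct and follows essentially the same route as the paper, which derives the theorem directly from the standard theory of self-adjoint elliptic operators on compact manifolds (citing \cite{AB,V}) without spelling out the details. Your write-up supplies exactly those details — in particular the only non-routine point, namely the kernel identification for the fourth-order operators $\square_A$ and $\square_{BC}$ at the spots $k=p+q$ and $k=p+q+1$, and the sandwich argument refining $\im\square^k_{p,q}$ into the two mutually orthogonal pieces — and both are handled correctly.
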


\begin{corollary}[Hodge isomorphism]\label{corollary hodge isomorphism}
Given a compact Hermitian manifold $(M,g)$, then
\[
\ker \delta^k_{p,q}=\H^k_{p,q}\oplus\delta^{k-1}_{p,q}\B^{k-1}_{p,q},
\]
which implies
\[
H^k_{p,q}\cong \H^k_{p,q}.
\]
\end{corollary}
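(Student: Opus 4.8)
The plan is to read off the corollary directly from the $L^2$ orthogonal decomposition of Theorem \ref{theorem hodge decomposition}, using only two inputs: the complex property $\delta^k_{p,q}\circ\delta^{k-1}_{p,q}=0$ (verified in the definition of Bigolin cohomology) and the adjointness of $(\delta^k_{p,q})^*$. No new analytic input is needed, since the hard work (ellipticity, finite-dimensionality of $\H^k_{p,q}$, and the three-term decomposition) is already packaged in Theorem \ref{theorem hodge decomposition}.

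First I would prove the inclusion $\H^k_{p,q}\oplus\delta^{k-1}_{p,q}\B^{k-1}_{p,q}\subseteq\ker\delta^k_{p,q}$. The harmonic summand satisfies $\H^k_{p,q}=\ker\delta^k_{p,q}\cap\ker(\delta^{k-1}_{p,q})^*\subseteq\ker\delta^k_{p,q}$ by its very characterisation in Theorem \ref{theorem hodge decomposition}, and $\delta^{k-1}_{p,q}\B^{k-1}_{p,q}\subseteq\ker\delta^k_{p,q}$ because $\delta^k_{p,q}\circ\delta^{k-1}_{p,q}=0$. For the reverse inclusion, I would take $\omega\in\ker\delta^k_{p,q}$ and use the decomposition of Theorem \ref{theorem hodge decomposition} to write $\omega=h+\delta^{k-1}_{p,q}\eta+(\delta^k_{p,q})^*\xi$ with $h\in\H^k_{p,q}$. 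Applying $\delta^k_{p,q}$ kills the first two terms and leaves $\delta^k_{p,q}(\delta^k_{p,q})^*\xi=0$; pairing with $\xi$ and transferring the adjoint gives $\lVert(\delta^k_{p,q})^*\xi\rVert^2=\langle\delta^k_{p,q}(\delta^k_{p,q})^*\xi,\xi\rangle=0$, whence $(\delta^k_{p,q})^*\xi=0$ and $\omega\in\H^k_{p,q}\oplus\delta^{k-1}_{p,q}\B^{k-1}_{p,q}$. The sum is orthogonal because it is a subsum of the orthogonal decomposition of Theorem \ref{theorem hodge decomposition}.

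The cohomological isomorphism then follows formally by passing to the quotient: since the two summands intersect only in $0$,
\[
H^k_{p,q}=\ker\delta^k_{p,q}/\im\delta^{k-1}_{p,q}=\bigl(\H^k_{p,q}\oplus\delta^{k-1}_{p,q}\B^{k-1}_{p,q}\bigr)/\delta^{k-1}_{p,q}\B^{k-1}_{p,q}\cong\H^k_{p,q}.
\]

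The only step that is more than bookkeeping is showing that the $\im(\delta^k_{p,q})^*$-component of a kernel element vanishes, i.e. the inner-product argument above; everything else is immediate. I would emphasise that this argument is uniform in $k$, and in particular goes through unchanged at the Aeppli degree $k=p+q$ and the Bott--Chern degree $k=p+q+1$, where $\square^k_{p,q}$ is of order four: the computation never uses the precise form of $\square^k_{p,q}$, only the decomposition of $\B^k_{p,q}$, the identity $\H^k_{p,q}=\ker\delta^k_{p,q}\cap\ker(\delta^{k-1}_{p,q})^*$, and $\delta^k_{p,q}\circ\delta^{k-1}_{p,q}=0$. Thus no special treatment of the fourth-order cases is required.
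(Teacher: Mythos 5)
Your proof is correct and is exactly the standard deduction the paper intends: the corollary is stated without proof as an immediate consequence of Theorem \ref{theorem hodge decomposition}, and your argument (the two easy inclusions, the inner-product step killing the $(\delta^k_{p,q})^*$-component of a $\delta^k_{p,q}$-closed form, and passing to the quotient) is precisely that standard argument. Your closing observation that nothing depends on the order of $\square^k_{p,q}$, so the Aeppli and Bott--Chern degrees need no special treatment, is also accurate, since Theorem \ref{theorem hodge decomposition} already packages the identity $\H^k_{p,q}=\ker\delta^k_{p,q}\cap\ker(\delta^{k-1}_{p,q})^*$ uniformly in $k$.
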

In particular, the numbers $h^k_{p,q}$ do not depend on the choice of the Hermitian metric $g$ which has been chosen to define them. They will be referred to as \emph{Bigolin numbers}.

\begin{remark}[Hodge * duality of harmonic forms]	\label{remark duality}
It is well known that the Hodge * operator induces an isomorphism between the spaces of Aeppli and Bott-Chern harmonic forms
\[
*:\H^{p+q}_{p,q}=\ker\square_A\cap A^{p,q}\simeq \ker\square_{BC}\cap A^{n-q,n-p}=\H_{n-q-1,n-p-1}^{2n-p-q-1},
\]
due to the formula
\begin{align*}
*\square_A=\square_{BC}*, && \square_A*=*\square_{BC}.
\end{align*}
For $k\ne p+q,p+q+1$, a straightforward computation shows
\begin{align*}
*\square_{p,q}^k=\square_{n-q-1,n-p-1}^{2n-k-1}*, && \square_{p,q}^k*=*\square_{n-q-1,n-p-1}^{2n-k-1},
\end{align*}
implying
\begin{equation}\label{eq_star_isom}
*:\H^{k}_{p,q}\simeq \H^{2n-k-1}_{n-q-1,n-p-1},
\end{equation}
which generalises the isomorphism between Aeppli and Bott-Chern harmonic forms.
\end{remark}

 Taking into account also the isomorphism given by the conjugation, we have the following relations for the Bigolin numbers $h^k_{p,q}$.

\begin{corollary}\label{corollary hodge numbers}
Given a compact complex manifold $M^n$ of complex dimension $n$, then
\begin{equation*}
h^k_{p,q}=h^{2n-k-1}_{n-q-1,n-p-1}=h^{2n-k-1}_{n-p-1,n-q-1}=h^k_{q,p}.
\end{equation*}
\end{corollary}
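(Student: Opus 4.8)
The plan is to derive the chain of equalities in Corollary \ref{corollary hodge numbers} directly from the two isomorphisms already established in the preceding material, namely the Hodge $*$ duality of harmonic forms in Remark \ref{remark duality} and the conjugation symmetry. Since Corollary \ref{corollary hodge isomorphism} gives $h^k_{p,q}=\dim_\C\H^k_{p,q}$, it suffices to exhibit linear isomorphisms between the relevant spaces of harmonic forms; equalities of dimensions then follow immediately.

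First I would invoke the $*$-isomorphism \eqref{eq_star_isom},
\[
*:\H^{k}_{p,q}\simeq \H^{2n-k-1}_{n-q-1,n-p-1},
\]
which holds for $k\ne p+q,p+q+1$ and, by the companion formula $*\square_A=\square_{BC}*$ recorded in Remark \ref{remark duality}, also in the two exceptional degrees. Taking dimensions yields the first equality
\[
h^k_{p,q}=h^{2n-k-1}_{n-q-1,n-p-1}.
\]
Next I would use the conjugation isomorphism: complex conjugation sends $A^{r,s}$ to $A^{s,r}$, intertwines $\del$ with $\delbar$ hence commutes with the operators $\delta^\bullet$ and their formal adjoints up to swapping $(p,q)$ with $(q,p)$, and therefore restricts to an $\R$-linear isomorphism $\H^k_{p,q}\simeq\H^k_{q,p}$. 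Since $h^k_{p,q}$ is a complex dimension and conjugation is $\C$-antilinear, this gives $h^k_{p,q}=h^k_{q,p}$, which is the last equality in the statement.

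The middle equality is then obtained by composing the two symmetries: applying conjugation to the already-established relation gives
\[
h^{2n-k-1}_{n-q-1,n-p-1}=h^{2n-k-1}_{n-p-1,n-q-1},
\]
and chaining all three identifications produces the full string
\[
h^k_{p,q}=h^{2n-k-1}_{n-q-1,n-p-1}=h^{2n-k-1}_{n-p-1,n-q-1}=h^k_{q,p}.
\]
I do not expect a genuine obstacle here, as the corollary is a formal consequence of isomorphisms proved earlier; the only point requiring minor care is checking that both symmetries are simultaneously valid in the exceptional degrees $k=p+q$ and $k=p+q+1$, where $\square^k_{p,q}$ is the fourth-order Aeppli/Bott-Chern Laplacian rather than the second-order one. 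This is handled by the Aeppli–Bott-Chern duality formulas $*\square_A=\square_{BC}*$ and $\square_A*=*\square_{BC}$ quoted in Remark \ref{remark duality}, together with the fact that conjugation preserves the Aeppli and Bott-Chern harmonic spaces, so the same composition argument goes through unchanged.
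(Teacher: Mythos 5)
Your proposal is correct and follows essentially the same route as the paper: the first equality comes from the Hodge $*$ isomorphism \eqref{eq_star_isom} (extended to the exceptional degrees via $*\square_A=\square_{BC}*$), the last from the conjugation symmetry, and the middle one by composing the two, all translated into equalities of dimensions through the Hodge isomorphism of Corollary \ref{corollary hodge isomorphism}. The only cosmetic difference is that you phrase conjugation at the level of harmonic forms while the paper invokes it at the level of cohomology, which is immaterial.
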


\begin{remark}[Euler characteristic]\label{remark euler char}
Now denote by $\chi_{p,q}$ the Euler characteristic of the complex $(\B^\bullet_{p,q},\delta^\bullet_{p,q})$, as in \cite[Section 2.5]{Ste}, namely
\[
\chi_{p,q}=\sum_{k=0}^{2n-1}(-1)^kh^k_{p,q}.
\]
From the previous considerations it follows that on a given compact complex manifold of complex dimension $n$
\[
\chi_{p,q}=-\chi_{n-q-1,n-p-1}=-\chi_{n-p-1,n-q-1}=\chi_{q,p}.
\]
In fact, using Corollary \ref{corollary hodge numbers} we get
\begin{align*}
\chi_{p,q}&=\sum_{k=0}^{2n-1}(-1)^kh^k_{p,q}\\
&=\sum_{k=0}^{2n-1}(-1)^kh^{2n-k-1}_{n-q-1,n-p-1}\\
&=\sum_{l=0}^{2n-1}(-1)^{l+1}h^l_{n-q-1,n-p-1}\\
&=-\chi_{n-q-1,n-p-1}.
\end{align*}
In particular it follows that $\chi_{p,q}=0$ for $p+q=n-1$.
We provide a graphic explanation of this, when $n=10$, $p=3$ and $q=6$:
\medskip
\begin{center}
\begin{tikzpicture}[scale=0.5]
    \foreach \x in {0,1,...,10}{
    \draw (\x,0)node[below,font=\footnotesize] {\x};
    }
    \foreach \y in {0,1,...,10}{
    \draw (0,\y)node[left,font=\footnotesize] {\y};
    }
    \foreach \x in {0,1,...,10}{
    \foreach \y in {0,1,...,10}{
    \fill[black!30] (\x,\y) circle[radius=1pt];
    }}
    \foreach \x in {0,1,...,3}{
    \foreach \y in {0,1,...,6}{
    \fill[black!80] (\x,\y) circle[radius=2pt];
    }}
    \foreach \x in {4,5,...,10}{
    \foreach \y in {7,8,...,10}{
    \fill[black!80] (\x,\y) circle[radius=2pt];
    }}
\end{tikzpicture}
\\
\begin{tikzpicture}[scale=1]
    \draw (0,0)node[below,font=\footnotesize] {Figure 3: the complex $(\B^\bullet_{3,6},\delta^\bullet_{3,6})$ for $n=10$.};
\end{tikzpicture}
\end{center}

Setting $\chi_p:=\sum_{q=0}^n(-1)^qh^{p,q}_\delbar$, where $h^{p,q}_\delbar$ are the Hodge numbers, in \cite[Remark 2.10]{Ste} it is proved that, if $p+q\le n-2$, then
\begin{equation}\label{equation euler bigolin dolbeault}
\chi_{p,q}=\sum_{k=p+1}^{n-q-1}(-1)^{k+1}\chi_k,
\end{equation}
while if $p+q= n-1$ then $\chi_{p,q}=0$. If $p+q\ge n$ then by $\chi_{p,q}=-\chi_{n-q-1,n-p-1}$ there is an analogous relation.
\end{remark}

\begin{proposition}[Inclusion of harmonic forms]
Given a compact Hermitian manifold $(M^n,g)$, for all $0\le k\le 2n-1$ there is an inclusion
\[
\ker \Delta_d\cap\B^k_{p,q}\subseteq\H^k_{p,q},
\]
which is actually an equality for 
\begin{enumerate}
\item $k<\min(p,q)$;
\item $k>n+\max(p,q)$;
\end{enumerate} 
while, for the remaining values of $k$, in general the equality does not hold. Here $\Delta_d$ denotes the Hodge-de Rham Laplacian $dd^*+d^*d$. 
\end{proposition}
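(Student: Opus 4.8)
The plan is to read off both directions from the explicit descriptions of $\delta^k_{p,q}$ and $(\delta^{k-1}_{p,q})^*$ recorded before the statement. Since $M$ is compact, a form $\alpha\in\B^k_{p,q}$ satisfies $\Delta_d\alpha=0$ if and only if $d\alpha=0$ and $d^*\alpha=0$, while by Theorem \ref{theorem hodge decomposition} one has $\alpha\in\H^k_{p,q}$ if and only if $\delta^k_{p,q}\alpha=0$ and $(\delta^{k-1}_{p,q})^*\alpha=0$. Thus the inclusion reduces to checking that every building block of $\delta^k_{p,q}$ kills $\alpha$ once $d\alpha=0$, and likewise for $(\delta^{k-1}_{p,q})^*$ once $d^*\alpha=0$.

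I would verify this degree by degree. For $\delta^k_{p,q}\alpha$: if $k<p+q$ then $\delta^k_{p,q}=\pi_{\B^{k+1}_{p,q}}\circ d$, so $\delta^k_{p,q}\alpha=\pi_{\B^{k+1}_{p,q}}(d\alpha)=0$; if $k>p+q$ then $\delta^k_{p,q}=d$ and again $\delta^k_{p,q}\alpha=0$; at the exceptional degree $k=p+q$ one has $\B^{p+q}_{p,q}=A^{p,q}$ and $\delta^{p+q}_{p,q}=\del\delbar$, and since $d\alpha=0$ splits by bidegree into $\del\alpha=0$ and $\delbar\alpha=0$, we get $\del\delbar\alpha=0$. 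Dually, for $(\delta^{k-1}_{p,q})^*\alpha$ I would use $d^*\alpha=0$: if $k\le p+q$ the codifferential is $d^*$; if $k>p+q+1$ it is $\pi_{\B^{k-1}_{p,q}}\circ d^*$; and at $k=p+q+1$, where $\B^{p+q+1}_{p,q}=A^{p+1,q+1}$ and $(\delta^{p+q}_{p,q})^*=\delbar^*\del^*$, the splitting of $d^*\alpha=0$ into $\del^*\alpha=0$ and $\delbar^*\alpha=0$ gives $\delbar^*\del^*\alpha=0$. In every case $\alpha\in\ker\delta^k_{p,q}\cap\ker(\delta^{k-1}_{p,q})^*=\H^k_{p,q}$, which is the desired inclusion.

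For the equality in range (1), $k<\min(p,q)$, I would observe that $\B^k_{p,q}=A^k_\C$ and, by Remark \ref{remark_differential} (as $k-1,k<\min(p,q)\le p+q$), that $\delta^k_{p,q}=d$ and $(\delta^{k-1}_{p,q})^*=d^*$ act as the full de Rham operators; hence $\square^k_{p,q}=\Delta_d$ on $\B^k_{p,q}$ and $\H^k_{p,q}=\ker\Delta_d\cap\B^k_{p,q}$. For range (2), $k>n+\max(p,q)$, I would deduce equality from range (1) by Hodge duality: $*$ commutes with $\Delta_d$, and by \eqref{eq_star_isom} it maps $\H^k_{p,q}$ onto $\H^{2n-k-1}_{n-q-1,n-p-1}$; since $k>n+\max(p,q)$ is equivalent to $2n-k-1<\min(n-q-1,n-p-1)$, the target degree lies in range (1), and transporting the equality there back along $*$ yields it here. (Alternatively one checks directly that $\B^k_{p,q}=A^{k+1}_\C$ with $\delta^k_{p,q}=d$ and $(\delta^{k-1}_{p,q})^*=d^*$.)

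For the strictness in the complementary range $\min(p,q)\le k\le n+\max(p,q)$, the essential point is that there $\delta$ or $\delta^*$ genuinely involves a projection or the refined operators $\del\delbar$, $\delbar^*\del^*$, so $\square^k_{p,q}\ne\Delta_d$. The clearest witness is the Aeppli degree $k=p+q$, where $\H^{p+q}_{p,q}=\ker\del\delbar\cap\ker\del^*\cap\ker\delbar^*\cap A^{p,q}$ while $\ker\Delta_d\cap A^{p,q}=\ker\del\cap\ker\delbar\cap\ker\del^*\cap\ker\delbar^*\cap A^{p,q}$; any $(p,q)$-form annihilated by $\del\delbar$, $\del^*$, $\delbar^*$ but not by both $\del$ and $\delbar$ lies in the former and not the latter, and such forms exist on non-$\del\delbar$-manifolds (for example on the small deformations of the Iwasawa manifold computed in Section \ref{sec_iwasawa}). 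I expect this last step — exhibiting an explicit example rather than arguing formally — to be the only non-routine part; the inclusion and the two equalities are a direct bookkeeping of the explicit descriptions of $\delta^k_{p,q}$ and $(\delta^{k-1}_{p,q})^*$.
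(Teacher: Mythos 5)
Your proposal is correct and its core is the paper's own argument: both proofs reduce everything, via compactness, to the joint-kernel descriptions $\ker\Delta_d\cap\B^k_{p,q}=\ker d\cap\ker d^*\cap\B^k_{p,q}$ and $\H^k_{p,q}=\ker\delta^k_{p,q}\cap\ker(\delta^{k-1}_{p,q})^*$ (Theorem \ref{theorem hodge decomposition}), and then compare the operators through their explicit forms; the paper merely organises the cases more economically, observing that for $k\le p+q$ the codifferential is literally $d^*$ (so only $\ker d\cap\B^k_{p,q}\subseteq\ker\delta^k_{p,q}$ needs checking), and dually for $k>p+q$. You deviate in two places. First, for equality in range (2) you argue by Hodge $*$ duality from range (1), using \eqref{eq_star_isom} and the equivalence $k>n+\max(p,q)\Leftrightarrow 2n-k-1<\min(n-p-1,n-q-1)$; this is valid and is a clean alternative to the paper's direct verification that $\B^k_{p,q}=A^{k+1}_\C$, $\delta^k_{p,q}=d$ and $(\delta^{k-1}_{p,q})^*=d^*$ in that range (which you also record). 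Second, for strictness the paper defers to Section \ref{sec_iwasawa}, where failure is visible at the degrees actually computed there: on the Iwasawa manifold $h^1_{1,1}=6$ while $\dim(\ker\Delta_d\cap\B^1_{1,1})=b^1=4$ (concretely, $\phi^3\in\H^1_{1,1}$ but $d\phi^3=-\phi^{12}\ne0$). Your witness at the Aeppli degree $k=p+q$ is sound in substance, but as written it rests on the unproved blanket assertion that Aeppli-harmonic, non-$d$-closed forms exist on every non-$\del\delbar$-manifold, and Section \ref{sec_iwasawa} does not substantiate that particular degree: the Aeppli numbers there are quoted from the literature, and $\dim(\ker\Delta_d\cap A^{p,q})$ is never computed in the paper. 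To make this step self-contained, either exhibit a form (on the Iwasawa manifold with the standard metric, $\phi^{1\bar3}$ satisfies $\del\delbar\phi^{1\bar3}=0$ and $\del^*\phi^{1\bar3}=\delbar^*\phi^{1\bar3}=0$, yet $\delbar\phi^{1\bar3}=\phi^{1\bar1\bar2}\ne0$), or simply invoke the counterexamples the paper itself computes at the non-Aeppli degrees.
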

\begin{proof}
We know
\[
\ker \Delta_d\cap\B^k_{p,q}=\ker d\cap\ker d^*\cap\B^k_{p,q},
\]
and 
\[
\H^k_{p,q}=\ker\delta^k_{p,q}\cap\ker(\delta^{k-1}_{p,q})^*.
\]
For $0\le k\le p+q$ it holds $(\delta^{k-1}_{p,q})^*=d^*$ and by definition $\ker d\cap\B^k_{p,q}\subseteq\ker\delta^k_{p,q}$, with equality for $k<\min(p,q)$; while for $k>p+q$ it holds $\delta^k_{p,q}=d$ and $\ker d^*\cap\B^k_{p,q}\subseteq\ker(\delta^{k-1}_{p,q})^*$ with equality for $k>n+\max(p,q)$. Counterexamples to the equality for the remaining values of $k$ will follow from explicit computations in Section \ref{sec_iwasawa}. This completes the proof.
\end{proof}

\begin{remark}[Explicit formulation of the Laplacian]\label{rmk laplacian}
We want to obtain a more explicit formulation of the elliptic operators $\square^k_{p,q}$.
Fix $0< p, q\le n$, $p+q\ne 2n$ and $\max(p,q)< k< p+q$. For $l=k,k-1$ we can write
\[
\B^{l}_{p,q}=A^{l-q,q}\oplus\dots\oplus A^{p,l-p}
\]
and
\[
\delta^l_{p,q}=\del\oplus d\oplus\dots\oplus d\oplus\delbar.
\]
Set $\alpha=\alpha^{k-q,q}\oplus\dots\oplus\alpha^{p,k-p}\in\B^{k}_{p,q}$ and compute
\begin{align*}
(\delta^k_{p,q})^*\delta^k_{p,q}\alpha&=d^*\del\alpha^{k-q,q}+d^*d\alpha^{k-q+1,q-1}+\dots\\
&+ d^*d\alpha^{p-1,k-p+1}+d^*\delbar\alpha^{p,k-p}.
\end{align*}
Similarly
\begin{align*}
\delta^{k-1}_{p,q}(\delta^{k-1}_{p,q})^*\alpha&=
\del\del^*\alpha^{k-q,q}+d\delbar^*\alpha^{k-q,q}+dd^*\alpha^{k-q+1,q-1}+\dots\\
&+dd^*\alpha^{p-1,k-p+1}+d\del^*\alpha^{p,k-p}+\delbar\delbar^*\alpha^{p,k-p}.
\end{align*}
Therefore
\begin{align*}
\square^{k}_{p,q}\alpha&=\Delta_\del\alpha^{k-q,q}+\Delta_d\alpha^{k-q+1,q-1}+\dots+\Delta_d\alpha^{p-1,k-p+1}+\Delta_\delbar\alpha^{p,k-p}\\
&+\delbar^*\del\alpha^{k-q,q}+
d\delbar^*\alpha^{k-q,q}+d\del^*\alpha^{p,k-p}+\del^*\delbar\alpha^{p,k-p}.
\end{align*}
\end{remark}

\begin{remark}[Laplacian on a K\"ahler manifold]\label{rmk laplacian kahler}
If the metric is K\"ahler, then by K\"ahler identities $\Delta_d=2\Delta_\del=2\Delta_\delbar$ and $\del\delbar^*+\delbar^*\del=\delbar\del^*+\del^*\delbar=0$. It then follows from Remark \ref{rmk laplacian} that in the same ranges for $p,q,k$
\begin{align*}
\square^{k}_{p,q}\alpha&=\Delta_\delbar\alpha^{k-q,q}+2\Delta_\delbar\alpha^{k-q+1,q-1}+\dots+2\Delta_\delbar\alpha^{p-1,k-p+1}+\Delta_\delbar\alpha^{p,k-p}\\
&+
\delbar\delbar^*\alpha^{k-q,q} +\del\del^*\alpha^{p,k-p}.
\end{align*}
In particular, if the manifold is compact and the metric is K\"ahler, then
\[
\H^k_{p,q}=\H^{k-q,q}_\delbar\oplus\dots\oplus\H^{p,k-p}_\delbar.
\]
If $\min(p,q)\le k\le \max(p,q)$, the computations are similar and yield to the same decomposition of $\H^k_{p,q}$. By Hodge * duality and conjugation we recover the remaining cases, proving the following result.
\end{remark}

\begin{theorem}[Hodge decomposition for compact K\"ahler manifolds]\label{thm_kahler}
Given a compact K\"ahler manifold $(M^n,g)$, for $0\le k\le 2n-1$
\[
\H^k_{p,q}=\ker\Delta_\delbar\cap\B^k_{p,q}.
\]
\end{theorem}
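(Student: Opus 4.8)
The plan is to prove the two inclusions $\ker\Delta_\delbar\cap\B^k_{p,q}\subseteq\H^k_{p,q}$ and the reverse, using throughout that $\Delta_\delbar$ preserves bidegree, so that
\[
\ker\Delta_\delbar\cap\B^k_{p,q}=\bigoplus_{(r,s)}\left(\ker\Delta_\delbar\cap A^{r,s}\right),
\]
the sum being over the bidegrees $(r,s)$ occurring in $\B^k_{p,q}$. By conjugation I may assume $p\le q$. I then reduce to the lower range $0\le k\le p+q$: for $p+q<k\le 2n-1$ the Hodge $*$ isomorphism \eqref{eq_star_isom} identifies $\H^k_{p,q}$ with $\H^{2n-k-1}_{n-q-1,n-p-1}$, whose index $2n-k-1$ lies in the lower range of the dual complex; since the $\C$-linear star commutes with $\Delta_d=2\Delta_\delbar$ and maps $A^{r,s}$ to $A^{n-s,n-r}$, it carries $\ker\Delta_\delbar\cap\B^{2n-k-1}_{n-q-1,n-p-1}$ isomorphically onto $\ker\Delta_\delbar\cap\B^k_{p,q}$, and applying $*^{-1}$ transports the lower-range equality to the upper range.

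For the generic lower range $\min(p,q)\le k<p+q$ the equality is exactly the conclusion of Remark \ref{rmk laplacian kahler}: writing (for $\max(p,q)<k<p+q$)
\[
\square^{k}_{p,q}\alpha=\Delta_\delbar\alpha^{k-q,q}+2\Delta_\delbar\alpha^{k-q+1,q-1}+\dots+2\Delta_\delbar\alpha^{p-1,k-p+1}+\Delta_\delbar\alpha^{p,k-p}+\delbar\delbar^*\alpha^{k-q,q}+\del\del^*\alpha^{p,k-p},
\]
where every summand preserves bidegree, and pairing with $\alpha$ in the $L^2$ product, each bidegree component yields a nonnegative quantity: an interior component gives $2\lVert\delbar\alpha^{r,s}\rVert^2+2\lVert\delbar^*\alpha^{r,s}\rVert^2$, and the two extreme components give $\lVert\delbar\alpha^{k-q,q}\rVert^2+2\lVert\delbar^*\alpha^{k-q,q}\rVert^2$ and $\lVert\delbar\alpha^{p,k-p}\rVert^2+\lVert\delbar^*\alpha^{p,k-p}\rVert^2+\lVert\del^*\alpha^{p,k-p}\rVert^2$. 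Thus $\square^k_{p,q}\alpha=0$ forces every $\alpha^{r,s}$ to be $\delbar$-harmonic, while conversely a $\delbar$-harmonic form is $d$-harmonic by the K\"ahler identities and so kills the boundary terms; the sub-range $\min(p,q)\le k\le\max(p,q)$ is identical up to one extreme component contributing $\Delta_d=2\Delta_\delbar$ in place of a boundary term.

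Two lower cases remain outside this range. For $k<\min(p,q)$ one has $\B^k_{p,q}=A^k_\C$ and $\delta^k_{p,q}=d$, so $\square^k_{p,q}=\Delta_d$ and the statement reduces to the classical K\"ahler identity $\ker\Delta_d\cap A^k_\C=\ker\Delta_\delbar\cap A^k_\C$. For the Aeppli value $k=p+q$ the operator is of order four, $\square_A=\delbar^*\del^*\del\delbar+(\del\del^*+\delbar\delbar^*)^2$ on $A^{p,q}$; pairing with $\alpha$ gives $\lVert\del\delbar\alpha\rVert^2+\lVert(\del\del^*+\delbar\delbar^*)\alpha\rVert^2$, so $\square_A\alpha=0$ yields $\del\delbar\alpha=0$ and, pairing the self-adjoint $\del\del^*+\delbar\delbar^*$ once more with $\alpha$, also $\del^*\alpha=\delbar^*\alpha=0$. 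The K\"ahler identity $[L,\del^*]=i\delbar$ for the Lefschetz operator $L$ then gives $\delbar\alpha=i\,\del^*L\alpha$, whence $\lVert\delbar\alpha\rVert^2=i\langle L\alpha,\del\delbar\alpha\rangle=0$, and symmetrically $\del\alpha=0$, so $\Delta_\delbar\alpha=0$; the reverse inclusion is immediate since a $\delbar$-harmonic form is $d$-harmonic and hence annihilated by every operator appearing in $\square_A$.

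I expect the main obstacle to be the Aeppli value $k=p+q$ (and, dually, the Bott--Chern value $k=p+q+1$): unlike the generic range, the order-four Laplacian does not split into nonnegative pieces of the form $c\,\Delta_\delbar$, so deducing $\delbar$-harmonicity from $\square_A$-harmonicity genuinely requires the Lefschetz commutator identity to turn $\del^*\alpha=0$ and $\del\delbar\alpha=0$ into $\delbar\alpha=0$. The only other delicate point is the duality reduction, where one must check not merely the abstract isomorphism \eqref{eq_star_isom} but that $*$ matches $\ker\Delta_\delbar$ on the two sides; this is ensured by the commutation of the $\C$-linear star with $\Delta_d$ together with the K\"ahler coincidence $\Delta_d=2\Delta_\delbar$.
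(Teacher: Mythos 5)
Your proof is correct, and its core is the same as the paper's: the K\"ahler simplification of $\square^k_{p,q}$ from Remark \ref{rmk laplacian kahler} on the range $\min(p,q)\le k<p+q$, combined with conjugation and the Hodge $*$ duality \eqref{eq_star_isom} to transport the equality to the remaining degrees; your check that $*$ really matches the two copies of $\ker\Delta_\delbar$ (via $*\Delta_d=\Delta_d*$ and the K\"ahler identity $\Delta_d=2\Delta_\delbar$) is exactly what makes that transport legitimate, and your bidegree-by-bidegree positivity argument, with the correct boundary terms $\delbar\delbar^*\alpha^{k-q,q}$ and $\del\del^*\alpha^{p,k-p}$, is the integration-by-parts step the paper leaves implicit in the sentence ``In particular, if the manifold is compact\dots''. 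Where you genuinely go beyond the paper is the Aeppli value $k=p+q$ (and, dually, the Bott--Chern value $k=p+q+1$). The paper computes only the second-order Laplacians explicitly and then claims the remaining cases follow ``by Hodge $*$ duality and conjugation''; but duality exchanges the Aeppli and Bott--Chern values with each other, not with the explicitly computed range, so these two values are not literally reached by that sentence --- the paper is tacitly invoking the classical K\"ahler coincidence of Aeppli, Bott--Chern and Dolbeault harmonic forms. Your treatment of $\square_A$ --- extracting $\del\delbar\alpha=0$ and $\del^*\alpha=\delbar^*\alpha=0$ from $\langle\square_A\alpha,\alpha\rangle=0$, then using the Lefschetz commutators $[L,\del^*]=i\delbar$ and $[L,\delbar^*]=-i\del$ together with $\del\delbar\alpha=0$ to force $\delbar\alpha=\del\alpha=0$ --- is precisely that classical argument, so your proof is self-contained at the one point where the paper's is not. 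The remaining case $k<\min(p,q)$, where $\square^k_{p,q}=\Delta_d$ and the statement is ordinary Hodge theory on a K\"ahler manifold, you also handle correctly.
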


\begin{remark}[Relation with the $\del\delbar$-Lemma]\label{rmk del delbar lemma}
It is possible to show that if, instead of assuming that the manifold is K\"ahler, we just assume that the $\del\delbar$-Lemma holds, then a similar decomposition holds at the  level of cohomology: for $0\le k\le 2n-1$
\[
H^k_{p,q}\simeq H^{\bullet,\bullet}_\delbar\cap\B^k_{p,q}
\]
is induced by the identity, where the intersection with $\B^k_{p,q}$ means that all the representatives in $H^{\bullet,\bullet}_\delbar$ are contained in $\B^k_{p,q}$. This follows, \textit{e.g.}, from the fact that the $\del\delbar$-Lemma holds if and only if the double complex of forms $(A^{\bullet,\bullet},\del,\delbar)$ is a direct sum of dots and squares \cite[Proposition 5.17]{DGMS} and the fact that the cohomology $H^k_{p,q}$ is zero when computed on squares \cite[Lemma 2.3]{Ste}.
\end{remark}

\section{Cohomological relations on compact complex manifolds}\label{section compact}

In Propositions \ref{proposition cohomology} and \ref{proposition bigolin cohomology} we have seen how Bigolin cohomology is related to the other cohomology spaces on a general complex manifold. All these relations follow by only using the definitions. In this section we explore other similar relations under the extra assumption that the manifold is compact. We will need to use the notion of zigzag for which we refer to the in depth treatment \cite{Ste21}.

Given a complex manifold $M^n$ of complex dimension $n$, the bigraded vector space $A^{\bullet,\bullet}:=\oplus_{1\le p,q\le n} A^{p,q}$ endowed with the differentials $\del$ and $\delbar$ has the structure of a double complex $(A^{\bullet,\bullet},\del,\delbar)$. We refer to \cite{Ste21} for the formalisation of the statement that every double complex over a field decomposes into so-called squares and zigzags. Note that de Rham, Dolbeault, $\del$-, Aeppli, Bott-Chern and more in general the $H^k_{p,q}$ cohomology spaces vanish on squares \cite[Section2]{Ste21} \cite[Lemma 2.3]{Ste}, therefore these cohomology spaces depend only on zigzags. 

Following the beginning of \cite[Section 4]{Ste21}, there are some restrictions to the shape of a general zigzag on a compact complex manifold of complex dimension $n$. There are symmetries given by conjugation and duality, and the dots in position $(0,0)$ and $(n,n)$ are as many as the connected components of the manifold. Furthermore, the only zigzags touching the corners $(0,0)$, $(n,0)$, $(0,n)$, $(n,n)$ are dots.

This last property will be referred to as the \lq\lq only dots (and squares) in the corners" rule. The next cohomological relations will follow by this rule.

\begin{proposition}[Cohomological equalities]\label{proposition cohomological equalities}
Given a compact complex manifold $M^n$ of complex dimension $n$, the following identities hold:
\begin{enumerate}
\item if $\min(p,q)=n-1$, then $H^{n-1}_{p,q}= H^{n-1}_{dR}$;
\item if $\max(p,q)=0$, then $H^{n}_{p,q}= H^{n+1}_{dR}$;
\item $H^{n-1}_{0,n-1}=H_\delbar^{0,n-1}$;
\item $H^{n}_{n-1,0}=H_\delbar^{n,1}$;
\item if $0\le p\le n-1$, then $H^{n-1}_{p,n}=H^{n-1}_{p,n-1}$;
\item if $0\le q\le n-1$, then $H^n_{-1,q}=H^{n}_{0,q}$.
\end{enumerate}
Equalities which are similar to (3),(4),(5),(6) hold by conjugation.
\end{proposition}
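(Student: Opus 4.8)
The plan is to run all six identities through a single mechanism: additivity of the relevant cohomology functors over the decomposition of $(A^{\bullet,\bullet},\del,\delbar)$ into squares and zigzags, combined with the \lq\lq only dots (and squares) in the corners" rule. Since $\del$ and $\delbar$ preserve this decomposition, so do the spaces $\B^\bullet_{p,q}$ and the differentials $\delta^\bullet_{p,q}$, and therefore each $H^k_{p,q}$ splits as the direct sum of the contributions of the indecomposable summands. By \cite[Lemma 2.3]{Ste} (and the analogous vanishing for $H^{p,q}_\delbar$ and $H^k_{dR}$) squares contribute nothing, so in every case it suffices to prove the stated equality on an arbitrary zigzag $Z$. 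The recurring key observation is that, by the corners rule, on any zigzag $Z$ the component $\del\colon A^{n-1,0}\to A^{n,0}$ vanishes: if $Z^{n,0}\neq 0$ then $Z$ must be a dot at $(n,0)$, hence $Z^{n-1,0}=0$; otherwise the map is trivially zero. The same reasoning gives $\delbar\colon A^{0,n-1}\to A^{0,n}=0$ on every zigzag, and symmetrically at the other two corners.

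I would first prove (1), (3), (5) directly. For (1), by conjugation assume $p=n-1\le q$, so $q\in\{n-1,n\}$; then $\B^{n-1}_{p,q}=A^{n-1}_\C$ and $\delta^{n-1}_{p,q}=\pi_{\B^n_{p,q}}\circ d$ drops exactly the corner slots $A^{n,0}$ (and also $A^{0,n}$ when $q=n-1$), while $\delta^{n-2}_{p,q}=d$ gives the de Rham denominator $dA^{n-2}_\C$. On each zigzag the dropped components $\del\alpha^{n-1,0}$ and $\delbar\alpha^{0,n-1}$ already vanish by the corner observation, so $\ker\delta^{n-1}_{p,q}=\ker d$ there and $H^{n-1}_{p,q}(Z)=H^{n-1}_{dR}(Z)$; summing over $Z$ yields (1) (the inclusion $\supseteq$ is also Proposition \ref{proposition cohomology}(3)). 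For (3), here $k=n-1=p+q$, so $H^{n-1}_{0,n-1}=H^{0,n-1}_A=\ker\del\delbar\cap A^{0,n-1}/\delbar A^{0,n-2}$; on a zigzag the corner at $(0,n)$ forces $\delbar\colon A^{0,n-1}\to A^{0,n}=0$, whence $\ker\del\delbar=\ker\delbar$ and, the denominator already being the Dolbeault one, $H^{0,n-1}_A(Z)=H^{0,n-1}_\delbar(Z)$. For (5) the spaces coincide, $\B^{n-1}_{p,n}=\B^{n-1}_{p,n-1}$, and so do $\B^{n-2}$ together with $\delta^{n-2}$ (by the computations in the proof of Proposition \ref{proposition bigolin cohomology}, valid for $k\le q-1$); the sole difference is that $\delta^{n-1}_{p,n}$ retains the output slot $A^{0,n}$ receiving $\delbar\alpha^{0,n-1}$, which $\delta^{n-1}_{p,n-1}$ omits. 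The corner rule at $(0,n)$ again annihilates this component on every zigzag, so the two kernels, and hence the two cohomologies, agree.

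The remaining identities (2), (4), (6) I would deduce from the three already proved using the Hodge-$*$ duality of Remark \ref{remark duality} (equivalently Corollary \ref{corollary hodge numbers}) together with Poincaré and Serre duality. Concretely, $*$ identifies $\H^n_{p,q}$ (with $\max(p,q)=0$) with $\H^{n-1}_{n-q-1,n-p-1}$, whose index satisfies $\min(\,\cdot\,)=n-1$, so each subcase of (2) is dual to a subcase of (1), and $H^{n+1}_{dR}\cong H^{n-1}_{dR}$ closes the loop; likewise $*$ relates the Bott--Chern group $H^n_{n-1,0}=H^{n,1}_{BC}$ of (4) to the Aeppli group $H^{0,n-1}_A$ of (3), and (6) to (5) as in the computation $H^n_{-1,q}\simeq H^{n-1}_{q',n}$ and $H^n_{0,q}\simeq H^{n-1}_{q',n-1}$ with $q'=n-q-1$. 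The conjugation isomorphism $H^k_{p,q}\simeq H^k_{q,p}$ then produces the variants of (3)--(6) announced after the statement.

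I expect the main obstacle to be bookkeeping rather than conceptual: in each base case one must pin down precisely which summand of $\delta^{n-1}$ is added or dropped relative to the target cohomology's differential, and verify that it is \emph{exactly} a map into or out of one of the four corner spaces, so that the corners rule applies cleanly; one must also confirm that all the other spaces and differentials genuinely coincide, so the comparison really reduces to that single corner component. The corners rule itself is the essential geometric input and is quoted from \cite{Ste21}; once the reduction to zigzags is set up, the vanishing of the corner differentials does all the work.
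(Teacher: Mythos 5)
Your proposal is correct, and for items (1), (3), (5) it is essentially the paper's own proof: the paper's argument is exactly the reduction to the square/zigzag decomposition of $(A^{\bullet,\bullet},\del,\delbar)$ plus the \lq\lq only dots in the corners" rule, applied to all six identities at once. Where you diverge is in items (2), (4), (6): the paper handles these by the \emph{same} direct corner-rule argument (now using the rule in the outgoing direction -- a dot sitting at $(n,0)$ or $(0,n)$ is $\del$- and $\delbar$-closed, so the extra terms in the denominators, e.g.\ $dA^{n,0}\oplus dA^{0,n}$ in (2), $\delbar A^{n,0}$ and $\del\delbar A^{n-1,0}$ in (4), $dA^{0,n}$ in (6), vanish zigzag by zigzag), whereas you reduce them to the already-proved cases via the Hodge $*$ isomorphism of Remark \ref{remark duality}, conjugation, and Poincar\'e/Serre duality. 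Your route buys economy (three base cases instead of six) at the cost of invoking compact Hodge theory and metric-dependent operators where the paper stays purely combinatorial; the paper's route is uniform and, importantly, produces the equalities as \emph{identity-induced} identifications rather than abstract isomorphisms.

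One point deserves care in your duality step. For (2) and (6) the two cohomologies share the same numerator and have nested denominators, so your dimension equality combined with the evident natural surjection does yield the stated equality (you should say this). But for (4) the natural map $H^{n,1}_{BC}\to H^{n,1}_\delbar$ shrinks both numerator and denominator, so it is a priori neither injective nor surjective, and equality of (finite) dimensions alone does not make it an isomorphism. If \lq\lq$=$" is read as the natural map being an isomorphism -- which is how the paper uses these identifications -- then (4) genuinely wants the direct argument: on every zigzag, $\ker d=\ker\delbar$ in the top row $A^{n,\bullet}$, and both $\del\delbar Z^{n-1,0}$ and $\delbar Z^{n,0}$ vanish by the corner rule. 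This is a small supplement, not a change of method, since it is the same corner-rule mechanism you already set up. A last cosmetic remark: in (5) the case $p=0$ has $\delta^{n-1}_{0,n-1}=\del\delbar$ (the Aeppli differential) rather than a projection of $d$, so your description of \lq\lq the sole difference" is slightly off there; that case is anyway exactly your item (3) combined with Proposition \ref{proposition cohomology}(4).
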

\begin{proof}
We repeatedly use the fact that the double complex $(A^{\bullet,\bullet},\del,\delbar)$ of a compact complex manifold can be decomposed into squares and zigzags, only zigzags contribute to cohomology, and the only zigzags touching corners $A^{n,0}$ and $A^{0,n}$ are dots.
\end{proof}

We can also use the \lq\lq only dots in the corner" rule to deduce an equality between the spaces of Hodge de Rham harmonic forms and Bigolin harmonic forms.

\begin{proposition}[Equality of harmonic forms]
Given a compact Hermitian manifold $(M^n,g)$, for $k=\min(p,q)=n-1$ or $k=n+\max(p,q)=n$ there is an equality
\[
\ker \Delta_d\cap\B^k_{p,q}=\H^k_{p,q}.
\]
\end{proposition}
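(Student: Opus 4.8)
The plan is to reduce the claimed equality to the two structural facts already established in the excerpt: the inclusion $\ker\Delta_d\cap\B^k_{p,q}\subseteq\H^k_{p,q}$ from the Proposition on inclusion of harmonic forms, together with the cohomological equalities of Proposition \ref{proposition cohomological equalities}(1)--(2), which were deduced precisely from the ``only dots in the corners'' rule. Since the reverse inclusion $\H^k_{p,q}\subseteq\ker\Delta_d\cap\B^k_{p,q}$ is the only thing missing, I would focus entirely on establishing it in the two boundary cases $k=\min(p,q)=n-1$ and $k=n+\max(p,q)=n$.

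First I would treat the case $k=\min(p,q)=n-1$. Here $\B^k_{p,q}=A^k_\C=A^{n-1}_\C$ and $(\delta^{k-1}_{p,q})^*=d^*$, so $\H^{n-1}_{p,q}=\ker\delta^{n-1}_{p,q}\cap\ker d^*$. The subtlety is that $\delta^{n-1}_{p,q}$ need not be all of $d$: by Remark \ref{remark_differential}, at $k=\min(p,q)$ the differential drops the component landing outside the rectangle, so $\ker\delta^{n-1}_{p,q}$ is a priori strictly larger than $\ker d$. Thus a form in $\H^{n-1}_{p,q}$ is $d^*$-closed and killed by the truncated differential, but I must upgrade this to being genuinely $d$-closed. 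The mechanism is the Hodge isomorphism of Corollary \ref{corollary hodge isomorphism} combined with Proposition \ref{proposition cohomological equalities}(1): we have $\dim\H^{n-1}_{p,q}=h^{n-1}_{p,q}=\dim H^{n-1}_{p,q}=\dim H^{n-1}_{dR}=\dim(\ker\Delta_d\cap A^{n-1}_\C)$. Combined with the inclusion $\ker\Delta_d\cap\B^{n-1}_{p,q}\subseteq\H^{n-1}_{p,q}$, the equality of dimensions forces the inclusion to be an equality.

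For the case $k=n+\max(p,q)=n$ (forcing $\max(p,q)=0$), the argument is dual: now $\B^n_{p,q}=A^{n+1}_\C$, the differential $\delta^n_{p,q}=d$ is honest, but $(\delta^{n-1}_{p,q})^*$ is a truncation of $d^*$, so $\ker(\delta^{n-1}_{p,q})^*$ may exceed $\ker d^*$. I would run the identical dimension count, invoking Proposition \ref{proposition cohomological equalities}(2) to get $\dim\H^n_{p,q}=\dim H^{n+1}_{dR}=\dim(\ker\Delta_d\cap A^{n+1}_\C)$ and again concluding equality of the two subspaces of $\B^n_{p,q}$ from the a priori inclusion plus matching dimensions.

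The main obstacle I anticipate is purely conceptual rather than computational: one must resist the temptation to prove the reverse inclusion directly at the level of forms, since $\H^k_{p,q}$ is literally defined by the weaker truncated conditions and so there is no pointwise reason a harmonic form should be $\Delta_d$-harmonic. The clean route is to bypass this entirely by counting dimensions, leveraging that Proposition \ref{proposition cohomological equalities} has already extracted the ``only dots in the corners'' geometric input and converted it into the cohomological equalities $H^{n-1}_{p,q}=H^{n-1}_{dR}$ and $H^n_{p,q}=H^{n+1}_{dR}$. Once those are granted, the proof is the soft observation that an inclusion of finite-dimensional spaces with equal dimensions is an equality.
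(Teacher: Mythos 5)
Your proposal is correct, and it rests on exactly the same geometric input as the paper's proof --- the ``only dots in the corners'' rule --- but it packages that input differently. For the first case ($k=\min(p,q)=n-1$) the paper's proof is a one-liner that invokes the corner rule directly; your dimension count is a clean and fully rigorous way of spelling out what that one-liner must mean: the inclusion $\ker\Delta_d\cap\B^k_{p,q}\subseteq\H^k_{p,q}$ already established in the paper, together with the chain $\dim\H^{n-1}_{p,q}=\dim H^{n-1}_{p,q}=\dim H^{n-1}_{dR}=\dim\bigl(\ker\Delta_d\cap A^{n-1}_\C\bigr)$ obtained from Corollary \ref{corollary hodge isomorphism}, Proposition \ref{proposition cohomological equalities}(1) (where the corner rule was already cashed in) and classical Hodge theory, forces the inclusion to be an equality. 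The genuine divergence is in the second case ($k=n$, $\max(p,q)=0$): the paper does not redo any counting there but reduces it to the first case via the Hodge $*$ duality of Remark \ref{remark duality}, using that $*$ commutes with $\Delta_d$ and maps $\H^{n}_{p,q}$ isomorphically onto $\H^{n-1}_{n-q-1,n-p-1}$, and that the parameters $k=n$, $\max(p,q)=0$ dualize precisely to $k=\min=n-1$; you instead rerun the dimension count using Proposition \ref{proposition cohomological equalities}(2). Both routes are valid: the duality reduction is slightly more economical, needing the corner rule only once, while your argument is more uniform, treating the two cases by one mechanism and never invoking $*$-duality. Your identification of the truncation issue (which component of $d$, respectively $d^*$, is missing in each case) is also accurate and matches the paper's definitions of $\delta^k_{p,q}$ and its codifferential.
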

\begin{proof}
The first case follows from the fact that the only zigzags touching the corners $A^{n,0}$ and $A^{0,n}$ are dots; the second case then follows by the Hodge $*$ duality of Remark \ref{remark duality}. 
\end{proof}

\begin{remark}[How many invariants?]\label{remark_how_many}
We want to count the minimum number of invariants $h^k_{p,q}$, modulo their symmetries, which are associated to a compact complex manifold of complex dimension $n$ and which are, a priori, different from Aeppli, Bott-Chern, Betti and Dolbeault numbers.

Thanks to the isomorphism \eqref{eq_star_isom}, in order to compute the spaces $\H^k_{p,q}$ for all $p,q,k$, it is sufficient to consider $\H^k_{p,q}$ either for all $0\le p,q\le n$ and $0\le k\le p+q$ or equivalently for all $-1\le p,q\le n-1$ and $p+q< k\le 2n-1$.

We focus on the case $0\le p,q\le n$ and $0\le k\le p+q$, namely we are looking at the bottom left rectangle in the graphic representation. Taking into account the isomorphism on cohomology given by conjugation, we can assume $p\le q$. 

By Proposition \ref{proposition cohomology}, we can exclude the following cases:
\begin{itemize}
\item $k=p+q$, where $H^k_{p,q}$ equals Aeppli cohomology $H^{p,q}_A$;
\item $k<\min(p,q)$, where $H^k_{p,q}$ equals de Rham cohomology $H^k_{dR}$;
\item $p=0$ for $k<q$, where $H^k_{0,q}$ equals to Dolbeault cohomology $H^{0,k}_\delbar$.
\end{itemize}
We are left with $0< p\le q\le n$, $p+q\ne 2n$, $\min(p,q)=p\le k< p+q$.
Moreover, by Proposition \ref{proposition cohomological equalities}, we can also exclude $k=\min(p,q)=n-1$, where $H^k_{p,q}$ equals de Rham cohomology $H^k_{dR}$.
Therefore, if $p=n-1$ then we are in the case $n\le k$.

Furthermore, by Propositions \ref{proposition bigolin cohomology} and \ref{proposition cohomological equalities},
\begin{align*}
H^k_{p,q}&=H^k_{p,q-1} && \text{for} && p\le k\le q-2,\\
H^{n-1}_{p,n}&=H^{n-1}_{p,n-1} & &\text{for} & & 0\le p\le n-1,
\end{align*}
thus we can exclude also such values for $k$, remaining with:
\begin{align*}
0< p\le q\le n, && p+q\ne 2n, && \max(p,q-1)\le k<p+q,
\end{align*}
and if $\max(p,q-1)=n-1$ then also $n\le k$. In particular, we find no invariants for $n=1,2$ and we focus on $n\ge 3$.

When $0<p=q\le n-2$ the index $k$ can vary from $p$ to $2p-1$, while for $0<p=q=n-1$ then $k$ can vary from $n$ to $2n-3$. When $0<p=q-1\le n-2$ the index $k$ can vary from $p$ to $2p$, while for $0<p=q-1= n-1$ then $k$ can vary from $n$ to $2n-2$.
When $0<p\le q-2\le n-3$ the index $k$ can vary from $q-1$ to $p+q-1$, while for $0<p\le q-2= n-2$ then $k$ can vary from $n$ to $p+n-1$.
Therefore, in order, the count (for $n\ge 3$) reduces to
\begin{align*}
\sum_{1\le p\le n-2}p+n-2&+\sum_{1\le p\le n-2}(p+1)+n-1+\sum_{\substack{1\le p\le n-3\\ p+2\le q\le n-1}}(p+1)+\sum_{1\le p\le n-2}p=\\
&=3n-5+3\sum_{1\le p\le n-1}p+\sum_{\substack{2\le p\le n-2\\ p+1\le q\le n-1}}p\\
&=3n-5+\frac32{(n-1)(n-2)}+\sum_{2\le p\le n-2}p(n-1-p)\\
&=3n-5+\frac32{(n-1)(n-2)}+(n-1)\left(\frac{(n-1)(n-2)}2-1\right)+\\&\ \ -\left(\frac{(n-1)(n-2)(2n-3)}{6}-1\right)\\
&=2n-3+\frac{(n+9)(n-1)(n-2)}6.
\end{align*}
This is the final number of the invariants $h^k_{p,q}$ on a compact complex manifold which are, a priori and modulo their symmetries, interesting to study and different from Betti, Dolbeault, Aeppli and Bott-Chern numbers.
\end{remark}

\section{The double complex of a complex 3-manifold\\ and cohomological invariants}\label{section double complex}

This section contains the main result of the paper: given a compact complex 3-manifold $M^3$, we relate the zigzags of its double complex with the cohomological invariants introduced so far. 

By the discussion of Section \ref{section compact}, apart from the dots in $(0,0)$ whose number is the number of connected components of the manifold, the list of zigzags modulo conjugation and duality on a given compact complex 3-manifold are the following (cf. \cite[proof of Proposition 8.15]{Ste22}).

\pagebreak

The zigzags starting from 1-forms are:
\begin{align*}
\begin{tikzpicture}[scale=0.5]
	\draw (3,3)node[font=\footnotesize]{A};
    \foreach \x in {0,1,...,3}{
    \foreach \y in {0,1,...,3}{
    \fill[black!30] (\x,\y) circle[radius=1pt];
    }}
    \fill[black!80] (0,1) circle[radius=2pt];
\end{tikzpicture}
&&
\begin{tikzpicture}[scale=0.5]
	\draw (3,3)node[font=\footnotesize]{B};
    \foreach \x in {0,1,...,3}{
    \foreach \y in {0,1,...,3}{
    \fill[black!30] (\x,\y) circle[radius=1pt];
    }}
    \draw[->] (0,1) -- (0,2);
\end{tikzpicture}
&&
\begin{tikzpicture}[scale=0.5]
	\draw (3,3)node[font=\footnotesize]{C};
    \foreach \x in {0,1,...,3}{
    \foreach \y in {0,1,...,3}{
    \fill[black!30] (\x,\y) circle[radius=1pt];
    }}
    \draw[->] (0,1) -- (0,2);
    \draw[->] (0,1) -- (1,1);
\end{tikzpicture}
&&
\begin{tikzpicture}[scale=0.5]
	\draw (3,3)node[font=\footnotesize]{D};
    \foreach \x in {0,1,...,3}{
    \foreach \y in {0,1,...,3}{
    \fill[black!30] (\x,\y) circle[radius=1pt];
    }}
    \draw[->] (0,1) -- (0,2);
    \draw[->] (0,1) -- (1,1);
    \draw[->] (1,0) -- (1,1);
\end{tikzpicture}
\end{align*}
\begin{align*}
\begin{tikzpicture}[scale=0.5]
	\draw (3,3)node[font=\footnotesize]{E};
    \foreach \x in {0,1,...,3}{
    \foreach \y in {0,1,...,3}{
    \fill[black!30] (\x,\y) circle[radius=1pt];
    }}
    \draw[->] (0,1) -- (0,2);
    \draw[->] (0,1) -- (1,1);
    \draw[->] (1,0) -- (1,1);
    \draw[->] (1,0) -- (2,0);
\end{tikzpicture}
&&
\begin{tikzpicture}[scale=0.5]
	\draw (3,3)node[font=\footnotesize]{F};
    \foreach \x in {0,1,...,3}{
    \foreach \y in {0,1,...,3}{
    \fill[black!30] (\x,\y) circle[radius=1pt];
    }}
    \draw[->] (0,1) -- (1,1);
\end{tikzpicture}
&&
\begin{tikzpicture}[scale=0.5]
	\draw (3,3)node[font=\footnotesize]{G};
    \foreach \x in {0,1,...,3}{
    \foreach \y in {0,1,...,3}{
    \fill[black!30] (\x,\y) circle[radius=1pt];
    }}
    \draw[->] (0,1) -- (1,1);
    \draw[->] (1,0) -- (1,1);
\end{tikzpicture}
\end{align*}
The zigzags starting from 2-forms are:
\begin{align*}
\begin{tikzpicture}[scale=0.5]
	\draw (3,3)node[font=\footnotesize]{H};
    \foreach \x in {0,1,...,3}{
    \foreach \y in {0,1,...,3}{
    \fill[black!30] (\x,\y) circle[radius=1pt];
    }}
    \fill[black!80] (0,2) circle[radius=2pt];
\end{tikzpicture}
&&
\begin{tikzpicture}[scale=0.5]
	\draw (3,3)node[font=\footnotesize]{I};
    \foreach \x in {0,1,...,3}{
    \foreach \y in {0,1,...,3}{
    \fill[black!30] (\x,\y) circle[radius=1pt];
    }}
    \draw[->] (0,2) -- (1,2);
\end{tikzpicture}
&&
\begin{tikzpicture}[scale=0.5]
	\draw (3,3)node[font=\footnotesize]{L};
    \foreach \x in {0,1,...,3}{
    \foreach \y in {0,1,...,3}{
    \fill[black!30] (\x,\y) circle[radius=1pt];
    }}
    \draw[->] (0,2) -- (1,2);
    \draw[->] (1,1) -- (1,2);
\end{tikzpicture}
&&
\begin{tikzpicture}[scale=0.5]
	\draw (3,3)node[font=\footnotesize]{M};
    \foreach \x in {0,1,...,3}{
    \foreach \y in {0,1,...,3}{
    \fill[black!30] (\x,\y) circle[radius=1pt];
    }}
    \draw[->] (0,2) -- (1,2);
    \draw[->] (1,1) -- (1,2);
    \draw[->] (1,1) -- (2,1);
\end{tikzpicture}
\end{align*}
\begin{align*}
\begin{tikzpicture}[scale=0.5]
	\draw (3,3)node[font=\footnotesize]{N};
    \foreach \x in {0,1,...,3}{
    \foreach \y in {0,1,...,3}{
    \fill[black!30] (\x,\y) circle[radius=1pt];
    }}
    \draw[->] (0,2) -- (1,2);
    \draw[->] (1,1) -- (1,2);
    \draw[->] (1,1) -- (2,1);
    \draw[->] (2,0) -- (2,1);
\end{tikzpicture}
&&
\begin{tikzpicture}[scale=0.5]
	\draw (3,3)node[font=\footnotesize]{O};
    \foreach \x in {0,1,...,3}{
    \foreach \y in {0,1,...,3}{
    \fill[black!30] (\x,\y) circle[radius=1pt];
    }}
    \fill[black!80] (1,1) circle[radius=2pt];
\end{tikzpicture}
&&
\begin{tikzpicture}[scale=0.5]
	\draw (3,3)node[font=\footnotesize]{P};
    \foreach \x in {0,1,...,3}{
    \foreach \y in {0,1,...,3}{
    \fill[black!30] (\x,\y) circle[radius=1pt];
    }}
    \draw[->] (1,1) -- (1,2);
\end{tikzpicture}
&&
\begin{tikzpicture}[scale=0.5]
	\draw (3,3)node[font=\footnotesize]{Q};
    \foreach \x in {0,1,...,3}{
    \foreach \y in {0,1,...,3}{
    \fill[black!30] (\x,\y) circle[radius=1pt];
    }}
    \draw[->] (1,1) -- (1,2);
    \draw[->] (1,1) -- (2,1);
\end{tikzpicture}
\end{align*}
The zigzags starting from 3-forms which are not dots are dual of the zigzags starting from 2-forms which are not dots, while the dots on 3-forms are:
\begin{align*}
\begin{tikzpicture}[scale=0.5]
	\draw (3,3)node[font=\footnotesize]{R};
    \foreach \x in {0,1,...,3}{
    \foreach \y in {0,1,...,3}{
    \fill[black!30] (\x,\y) circle[radius=1pt];
    }}
    \fill[black!80] (0,3) circle[radius=2pt];
\end{tikzpicture}
&&
\begin{tikzpicture}[scale=0.5]
	\draw (3,3)node[font=\footnotesize]{S};
    \foreach \x in {0,1,...,3}{
    \foreach \y in {0,1,...,3}{
    \fill[black!30] (\x,\y) circle[radius=1pt];
    }}
    \fill[black!80] (1,2) circle[radius=2pt];
\end{tikzpicture}
\end{align*}
All the other zigzags can be obtained by conjugation or duality, or are forbidden by the \lq\lq only dots and squares in the corners" rule. We provide a graphic explanation of zigzags obtained by conjugation or duality:

\begin{align*}
\text{B and its conjugate} && \text{B and its dual}\\
\begin{tikzpicture}[scale=0.5]
	\draw (3,3)node[font=\footnotesize]{};
    \foreach \x in {0,1,...,3}{
    \foreach \y in {0,1,...,3}{
    \fill[black!30] (\x,\y) circle[radius=1pt];
    }}
    \draw[->] (0,1) -- (0,2);
    \draw[->] (1,0) -- (2,0);
\end{tikzpicture}
&&
\begin{tikzpicture}[scale=0.5]
	\draw (3,3)node[font=\footnotesize]{};
    \foreach \x in {0,1,...,3}{
    \foreach \y in {0,1,...,3}{
    \fill[black!30] (\x,\y) circle[radius=1pt];
    }}
    \draw[->] (0,1) -- (0,2);
    \draw[->] (1,3) -- (2,3);
\end{tikzpicture}
\end{align*}

The double complex $(A^{\bullet,\bullet},\del,\delbar)$ of a compact complex 3-manifold is the direct sum of possibly infinitely many squares (which do not contribute to cohomology) and finitely many zigzags as above (or their conjugates or duals). We denote the above zigzags by the letters A,B,$\dots$,S as previously indicated. By an abuse of notation, we also refer to the multiplicities of the respective zigzags inside the direct sum decomposition of the double complex $(A^{\bullet,\bullet},\del,\delbar)$ by the same letters A,B,$\dots$,S.

\begin{remark}[Cohomological invariants via zigzags]\label{rmk cohom zigzags}
By \cite[Proposition 6, Lemma 8]{Ste21}, we know which zigzags contribute to de Rham, Aeppli, Bott-Chern, Dolbeault and $\del$-cohomology. Namely,
\begin{align*}
h^{0,1}_\delbar&=A+D+F+G,\\
h^{0,1}_\del&=A+B,\\
h^{0,1}_{BC}&=A,\\
h^{0,1}_A&=A+B+C+2D+E+F+G,\\
b^{1}&=2A+G,\\
h^{0,2}_\delbar&=H+I+L+M+N=h^{0,2}_A,\\
h^{0,2}_\del&=B+C+D+E+H=h^{0,2}_{BC},\\
h^{1,1}_\delbar&=C+F+L+O+P,\\
h^{1,1}_{BC}&=2C+2D+E+2F+G+O,\\
h^{1,1}_A&=2L+2M+N+O+2P+Q,\\
b^{2}&=2C+E+2H+2L+N+O,\\
h^{0,3}_\delbar&=R=h^{0,3}_\del=h^{0,3}_{BC}=h^{0,3}_A,\\
h^{1,2}_\delbar&=I+M+P+Q+S=h^{1,2}_\del,\\
h^{1,2}_{BC}&=I+L+2M+N+P+Q+S=h^{1,2}_A,\\
b^{3}&=2R+2S+2Q.
\end{align*}
The other Aeppli, Bott-Chern, Dolbeault, $\del$- and Betti numbers are obtained via conjugation or duality:
\begin{align*}
h^{p,q}_A&=h^{q,p}_A, &  h^{p,q}_{BC}&=h^{q,p}_{BC}, & h^{p,q}_\delbar&=h^{q,p}_\del,\\
h^{p,q}_A&=h^{3-p,3-q}_{BC}, & h^{p,q}_\delbar&=h^{3-p,3-q}_\delbar, & b^k&=b^{6-k}.
\end{align*}
The above equalities can also be checked by hand. E.g., let us compute $h^{1,2}_\delbar$: the only zigzags starting from 2-forms and touching $(1,2)$ are I,L,M,N,P,Q, but L,M,N,P,Q correspond to $\delbar$-exact forms, and the only zigzags starting from 3-forms and touching $(1,2)$ which correspond to $\delbar$-closed forms are S, the duals of P,Q and the dual of the conjugate of M. Also note that $b^3$ is even, which is true on any compact smooth manifold of real dimension 6.

Placing all the above linear expressions in matrix form, we obtain a 15x17 matrix of rank 14. In this way we re-obtain the well known Fr\"olicher relations in the particular case of complex 3-manifolds.
\end{remark}

\begin{corollary}[Fr\"olicher relations]\label{corollary relation betti dolbeault}
Given a compact complex 3-manifold, if we set $h^k_\delbar:=\sum_{p+q=k}h^{p,q}_\delbar$, then
\[
\sum_{k=0}^6(-1)^kb^k=\sum_{k=0}^6(-1)^kh^k_\delbar,
\]
and for all $k$
\[
b^k\le h^k_\delbar.
\]
\end{corollary}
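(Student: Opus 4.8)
The plan is to read both assertions straight off the zigzag dictionary of Remark \ref{rmk cohom zigzags}, since on a compact complex $3$-manifold every Betti number $b^k$ and every Dolbeault number $h^{p,q}_\delbar$ is a fixed linear combination of the zigzag multiplicities $A,B,\dots,S$ (squares contribute nothing to either cohomology). First I would assemble all the numbers I need in terms of these multiplicities: the Betti numbers $b^1,b^2,b^3$ and the Dolbeault numbers $h^{0,1}_\delbar,h^{0,2}_\delbar,h^{1,1}_\delbar,h^{0,3}_\delbar,h^{1,2}_\delbar$ are listed outright, while the remaining entries are recovered from the symmetries $h^{p,q}_\delbar=h^{q,p}_\del$, $h^{p,q}_\delbar=h^{3-p,3-q}_\delbar$ and $b^k=b^{6-k}$, together with $h^{0,0}_\delbar=b^0$. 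In this way each $h^k_\delbar=\sum_{p+q=k}h^{p,q}_\delbar$ becomes an explicit expression in $A,\dots,S$.

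For the Euler characteristic identity, Poincaré duality and Serre duality collapse both alternating sums to
\[
\sum_{k=0}^6(-1)^kb^k=2b^0-2b^1+2b^2-b^3,\qquad \sum_{k=0}^6(-1)^kh^k_\delbar=2h^0_\delbar-2h^1_\delbar+2h^2_\delbar-h^3_\delbar.
\]
I would then substitute the zigzag expressions and check that the two right-hand sides coincide term by term. Conceptually this is forced: an odd-length zigzag contributes $1$ to $b^k$ and to $h^k_\delbar$ in one and the same total degree $k$, while an even-length zigzag contributes $0$ to every $b^k$ and contributes to Dolbeault cohomology either nothing or $1$ in each of two consecutive degrees $k_0,k_0+1$; in all cases its net contribution to the two alternating sums agrees, being $\pm1$ in the first case and $0=(-1)^{k_0}+(-1)^{k_0+1}$ in the second.

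For the inequalities I would compute the differences $h^k_\delbar-b^k$ for $k=0,1,2,3$, the cases $k=4,5,6$ following from the same dualities. The expected outcome is that each difference is a \emph{nonnegative} combination of zigzag multiplicities; concretely I anticipate
\[
h^0_\delbar-b^0=0,\quad h^1_\delbar-b^1=B+D+F,\quad h^2_\delbar-b^2=B+D+F+I+M+P,\quad h^3_\delbar-b^3=2I+2M+2P,
\]
whence $b^k\le h^k_\delbar$ in every degree. Note that only the even-length zigzags $B,D,F,I,M,P$ survive in these differences, in accordance with the principle that precisely the even zigzags raise Dolbeault cohomology above de Rham cohomology.

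The step needing the most care is the bookkeeping of the Dolbeault numbers that are not written down explicitly, such as $h^{1,0}_\delbar,h^{2,0}_\delbar,h^{1,3}_\delbar,h^{3,1}_\delbar$: a listed shape and its conjugate or dual land in different bidegrees $(p,q)$ but in the same total degree, so one must track the full orbit of each zigzag under conjugation and duality to get the coefficients right (this is also where the factors of $2$ in the formulas for $b^k$ and $h^k_\delbar$ originate). Once this table is verified to be correct, both statements reduce to the immediate linear-algebra comparisons displayed above.
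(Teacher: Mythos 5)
Your proposal is correct and follows essentially the same route as the paper: both reduce to degrees $0,\dots,3$ via Poincar\'e/Serre duality and then verify the identity and the inequalities by substituting the zigzag expressions of Remark \ref{rmk cohom zigzags}; indeed your anticipated differences $h^1_\delbar-b^1=B+D+F$, $h^2_\delbar-b^2=B+D+F+I+M+P$, $h^3_\delbar-b^3=2I+2M+2P$ are exactly consistent with the paper's check that both sides of the collapsed identity equal $-2I-2M-2P$. Your additional parity observation (odd zigzags contribute $1$ to $b^k$ and to $h^k_\delbar$ in the same degree, even zigzags contribute $0$ to Betti and in two consecutive degrees to Dolbeault) is a correct conceptual gloss on the same computation, not a different method.
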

\begin{proof}
By duality and $b^0=h^0_\delbar$, the first claim is equivalent to show
\[
b^3-h^3_\delbar=2b^2-2h^2_\delbar+2h^1_\delbar-2b^1.
\]
It is then enough to check that both sides of the above equality coincide with the multiplicities $-2I-2M-2P$. The second claim is immediate to check.
\end{proof}

\begin{remark}[The Bigolin cohomology $H^k_{p,q}$ via zigzags]\label{rmk cohom bigolin zigzags}
By Remark \ref{remark_how_many}, up to symmetries there are 7 invariants $h^k_{p,q}$ to consider on a complex 3-manifold,
which we list drawing the pertinent part of the complex $(\B^\bullet_{p,q},\delta^\bullet_{p,q})$:
\begin{align*}
 h^1_{1,1} & & h^1_{1,2},h^2_{1,2} & & h^3_{1,3} \\
\begin{tikzpicture}[scale=0.5]
    \foreach \x in {0,1,...,3}{
    \draw (\x,0)node[below,font=\footnotesize] {\x};
    }
    \foreach \y in {0,1,...,3}{
    \draw (0,\y)node[left,font=\footnotesize] {\y};
    }
    \foreach \x in {0,1,...,3}{
    \foreach \y in {0,1,...,3}{
    \fill[black!30] (\x,\y) circle[radius=1pt];
    }}
    \foreach \x in {0,1,...,1}{
    \foreach \y in {0,1,...,1}{
    \fill[black!80] (\x,\y) circle[radius=2pt];
    }}
\end{tikzpicture}
& &
\begin{tikzpicture}[scale=0.5]
    \foreach \x in {0,1,...,3}{
    \draw (\x,0)node[below,font=\footnotesize] {\x};
    }
    \foreach \y in {0,1,...,3}{
    \draw (0,\y)node[left,font=\footnotesize] {\y};
    }
    \foreach \x in {0,1,...,3}{
    \foreach \y in {0,1,...,3}{
    \fill[black!30] (\x,\y) circle[radius=1pt];
    }}
    \foreach \x in {0,1,...,1}{
    \foreach \y in {0,1,...,2}{
    \fill[black!80] (\x,\y) circle[radius=2pt];
    }}
\end{tikzpicture}
& &
\begin{tikzpicture}[scale=0.5]
    \foreach \x in {0,1,...,3}{
    \draw (\x,0)node[below,font=\footnotesize] {\x};
    }
    \foreach \y in {0,1,...,3}{
    \draw (0,\y)node[left,font=\footnotesize] {\y};
    }
    \foreach \x in {0,1,...,3}{
    \foreach \y in {0,1,...,3}{
    \fill[black!30] (\x,\y) circle[radius=1pt];
    }}
    \foreach \x in {0,1,...,1}{
    \foreach \y in {0,1,...,3}{
    \fill[black!80] (\x,\y) circle[radius=2pt];
    }}
\end{tikzpicture}
\end{align*}

\begin{align*}
 h^3_{2,2} & & h^3_{2,3},h^4_{2,3}\\
\begin{tikzpicture}[scale=0.5]
    \foreach \x in {0,1,...,3}{
    \draw (\x,0)node[below,font=\footnotesize] {\x};
    }
    \foreach \y in {0,1,...,3}{
    \draw (0,\y)node[left,font=\footnotesize] {\y};
    }
    \foreach \x in {0,1,...,3}{
    \foreach \y in {0,1,...,3}{
    \fill[black!30] (\x,\y) circle[radius=1pt];
    }}
    \foreach \x in {0,1,...,2}{
    \foreach \y in {0,1,...,2}{
    \fill[black!80] (\x,\y) circle[radius=2pt];
    }}
\end{tikzpicture}
& &
\begin{tikzpicture}[scale=0.5]
    \foreach \x in {0,1,...,3}{
    \draw (\x,0)node[below,font=\footnotesize] {\x};
    }
    \foreach \y in {0,1,...,3}{
    \draw (0,\y)node[left,font=\footnotesize] {\y};
    }
    \foreach \x in {0,1,...,3}{
    \foreach \y in {0,1,...,3}{
    \fill[black!30] (\x,\y) circle[radius=1pt];
    }}
    \foreach \x in {0,1,...,2}{
    \foreach \y in {0,1,...,3}{
    \fill[black!80] (\x,\y) circle[radius=2pt];
    }}
\end{tikzpicture}
\end{align*}
These invariants can be computed using zigzags as
\begin{align*}
h^1_{1,1}&=2A+2B+2D+E+G,\\
h^1_{1,2}&=2A+B+D+G,\\
h^2_{1,2}&=C+H+2L+M+N+O+P,\\
h^3_{1,3}&=M+P+Q+R+S,\\
h^3_{2,2}&=2I+2M+N+2Q+2S,\\
h^3_{2,3}&=I+M+2Q+R+2S,\\
h^4_{2,3}&=2C+D+E+F+H+L+O.
\end{align*}
We illustrate these equalities:
\begin{itemize}
\item[$h^1_{1,1}$:]
The zigzags starting on 1-forms which correspond to $\delta^1_{1,1}$-closed forms are A,B,D,E,G, plus the conjugates of A,B,D (there are no $\delta^0_{1,1}$-exact forms by the \lq\lq only dots and squares in the corners" rule). Therefore A,B,D are counted twice while E,G once.
\item[$h^1_{1,2}$:]
The zigzags starting on 1-forms which correspond to $\delta^1_{1,2}$-closed forms are A,G, plus the conjugates of A,B,D.
\item[$h^2_{1,2}$:]
All the zigzags starting on 1-forms correspond to $\delta^2_{1,2}$-closed 2-forms but all except for one of these forms are $\delta^1_{1,2}$-exact. For example, the forms corresponding to $(0,2)$ and $(1,1)$ in D are $\delta^1_{1,2}$-exact: write $\alpha^{0,2}\in A^{0,2}$ and $\alpha^{1,1}\in A^{1,1}$ for these forms, then
\begin{align*}
\alpha^{0,2}=\delbar\beta^{0,1}, && \alpha^{1,1}=\del\beta^{0,1}=\delbar\beta^{1,0},
\end{align*}
for some $\beta^{0,1}\in A^{1,0}$ and $\beta^{1,0}\in A^{1,0}$, and this can be rewritten as
\begin{align*}
\alpha^{0,2}+\alpha^{1,1}=\delta^1_{1,2}\beta^{0,1}, &&\alpha^{1,1}=\delta^1_{1,2}\beta^{1,0}.
\end{align*}
Similarly, also B,E,F,G and the conjugates of C,D,F correspond to $\delta^1_{1,2}$-exact forms. On the other hand, if we denote by $\gamma^{0,2}\in A^{0,2}$ and $\gamma^{1,1}\in A^{1,1}$ the forms corresponding to $(0,2)$ and $(1,1)$ in C, then
\begin{align*}
\gamma^{0,2}=\delbar\beta^{0,1}, && \gamma^{1,1}=\del\beta^{0,1},
\end{align*}
therefore
\[
\gamma^{0,2}+\gamma^{1,1}=\delta^1_{1,2}\beta^{0,1}.
\]
Summing up, C provides two linearly independent $\delta^2_{1,2}$-closed 2-forms but their sum is $\delta^1_{1,2}$-exact, thus C is counted once in $h^2_{1,2}$.

The zigzags starting on 2-forms corresponding to $\delta^2_{1,2}$-closed forms are H,L,M,N,O, plus the conjugates of L,P.
\item[$h^3_{1,3}$:] 
All the zigzags starting on 2-forms correspond to $\delta^2_{1,3}$-exact forms. On the other hand, among the zigzags starting on 3-forms, R,S and the duals of P,Q correspond to $\delta^3_{1,3}$-closed forms.
\item[$h^3_{2,2}$:]
Among the zigzags starting on 2-forms corresponding to $\delta^3_{2,2}$-closed 3-forms, only Q contributes (once) to $h^3_{2,2}$, similarly to the case of $C$ already described for $h^2_{1,2}$. Moreover, the zigzags starting on 3-forms corresponding to $\delta^3_{2,2}$-closed forms are: S and its conjugate, the dual of I and its conjugate, the dual of M and its conjugate, the dual of N, the dual of Q.
\item[$h^3_{2,3}$:]
Similarly to the previous case, among the zigzags starting on 2-forms corresponding to $\delta^3_{2,3}$-closed 3-forms, only Q contributes (once) to $h^3_{2,3}$. Furthermore, the zigzags starting on 3-forms corresponding to $\delta^3_{2,3}$-closed forms are: R, S and its conjugate, the dual of Q, the dual of the conjugate of I.
\item[$h^4_{2,3}$:]
Among the zigzags starting on 3-forms corresponding to $\delta^4_{2,3}$-closed 4-forms, only the dual of L contributes (once) to $h^4_{2,3}$, similarly to the case of $C$ already described for $h^2_{1,2}$. On the other hand, the zigzags starting on 4-forms corresponding to $\delta^4_{2,3}$-closed forms are the duals of H,O,C,D,E and the conjugates of the duals of C,F.
\end{itemize}
Actually, we can generalise the contributions of the zigzags to any cohomology space $H^k_{p,q}$ as follows, in a similar fashion of \cite[proof of Proposition 6]{Ste21}. The proof is the same and will be omitted, and the idea is essentially contained in the above computations. Every zigzag can be seen as a set of dots $(r,s)$. We define the \emph{restriction} of a zigzag Z to the space $\B^\bullet_{p,q}$ as a new zigzag Z' which is given by all the dots in Z corresponding to thick points in the graphic representation of $\B^\bullet_{p,q}$ (cf. Remark \ref{remark_graphic_rep}). E.g., B is the set $\{(0,1),(0,2)\}$, and $B$ restricted to $\B^\bullet_{1,1}$ is A, i.e., $\{(0,1)\}$. We recall that a zigzag is \emph{even} or \emph{odd} if its cardinality is even or odd. We say that an odd zigzag has most components in degree $k$ if most of its elements $(r,s)$ satisfy $r+s=k$.
\begin{proposition}[Contribution of zigzags to $h^k_{p,q}$]\label{prop zigzag contribute to bigolin}
Given a compact complex $n$-manifold and $k\le p+q$, $h^k_{p,q}$ equals the sum of the multiplicities of the zigzags which, once they have been restricted to $\B^\bullet_{p,q}$, are odd zigzags with most components in degree $k$.
\end{proposition}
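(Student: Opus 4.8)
The plan is to follow the strategy of \cite[Proposition 6]{Ste21}, reducing the computation to a single zigzag and then to an ordinary total complex. First I would exploit additivity. Since $(A^{\bullet,\bullet},\del,\delbar)$ decomposes as a direct sum of squares and zigzags and the assignment of Bigolin cohomology to a bicomplex is additive, we have
\[
h^k_{p,q}=\sum_{Z}m_Z\,\dim_\C H^k_{p,q}(Z),
\]
where $Z$ runs over the zigzags appearing in the decomposition, $m_Z$ is the multiplicity, and $H^k_{p,q}(Z)$ denotes the Bigolin cohomology computed on the sub-bicomplex $Z$. By \cite[Lemma 2.3]{Ste} squares contribute nothing, so they are already absent from the sum. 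It therefore suffices to prove that, for a single zigzag $Z$ and $k\le p+q$, the space $H^k_{p,q}(Z)$ is one-dimensional exactly when the restriction $Z'$ of $Z$ to $\B^\bullet_{p,q}$ is an odd zigzag with most components in degree $k$, and vanishes otherwise.

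Second, I would reduce from $Z$ to its restriction $Z'$. The complex $(\B^\bullet_{p,q},\delta^\bullet_{p,q})$ only involves the spaces $A^{r,s}$ with $(r,s)$ a thick point, so $Z$ contributes through its thick dots only, giving $H^k_{p,q}(Z)=H^k_{p,q}(Z')$. I would then check that $Z'$ is again a single zigzag. Along a zigzag the maps alternate between $\del$ and $\delbar$ and between sources and sinks, so each monotone run has length one; consequently one coordinate, say $r$, is non-decreasing along the path while $s$ is non-increasing. For degrees $\le p+q$ the thick region is the rectangle $\{0\le r\le p,\ 0\le s\le q\}$, and each of the conditions $r\le p$ and $s\le q$ cuts out a sub-path, so their intersection is a contiguous segment of the path, hence connected. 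Thus $Z'$ is a zigzag contained in this rectangle.

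Third, I would identify the Bigolin complex of $Z'$ with an ordinary total complex. For $k<p+q$ the differentials $\delta^{k-1}_{p,q},\delta^k_{p,q}$ equal $\pi_{\B^\bullet_{p,q}}\circ d$, which on $Z'$ is the total differential of the standalone bicomplex $Z'$ (arrows leaving the rectangle land on dots absent from $Z'$ and are thus zero anyway). At the boundary degree $k=p+q$ the differential is $\del\delbar$; but $\del\delbar=0$ on every zigzag, since a nonzero composition would force a square, which $Z'$ does not contain. As $Z'$ has no dots of degree exceeding $p+q$, this is consistent with the truncated total differential, and hence for all $k\le p+q$ one obtains
\[
H^k_{p,q}(Z')\cong H^k\!\left(\mathrm{Tot}(Z'),d\right),
\]
the cohomology in total degree $k$ of the total complex of the zigzag $Z'$.

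Finally, the cohomology of the total complex of a zigzag is precisely the de Rham contribution of \cite[Proposition 6]{Ste21}: a zigzag $W$ yields a one-dimensional space in $H^k(\mathrm{Tot}(W),d)$ exactly when $W$ is odd with most components in degree $k$, and nothing otherwise. This can also be seen directly, since the dots of $W$ occupy two adjacent total degrees and, in the majority-degree case, the relevant source/sink incidence map has maximal rank with one-dimensional cokernel (or kernel), while in all other cases it is an isomorphism in degree $k$. Applying this with $W=Z'$ and summing over $Z$ with multiplicities gives the statement. I expect the main obstacle to be the geometric bookkeeping of the restriction, namely verifying that $Z'$ is connected and hence a single zigzag, together with the passage to the total complex at the boundary degree $k=p+q$: there the Bigolin differential is $\del\delbar$ rather than $d$, and it is the vanishing $\del\delbar|_{Z'}=0$ that makes the Aeppli endpoint fit the uniform total-complex description.
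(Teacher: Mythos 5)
Your proposal is correct and takes essentially the same route as the paper, which omits the details and defers to the proof of \cite[Proposition 6]{Ste21}: decompose the double complex into squares and zigzags, restrict each zigzag to $\B^\bullet_{p,q}$, and reduce to the cohomology of the total complex of a single (restricted) zigzag. Your write-up merely makes explicit the steps the paper declares to be \lq\lq the same" as in \cite{Ste21} and \lq\lq essentially contained in the above computations" (additivity, connectedness of the restriction, the vanishing $\del\delbar=0$ on zigzags at the Aeppli degree, and the rank of the incidence map), so there is no genuine divergence in method.
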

\end{remark}

We are now ready to prove our main result.

\begin{theorem}[Bijection between zigzags and cohomological invariants]\label{thm zigzags 3manifolds}
Given a compact complex 3-manifold, the multiplicities of the zigzags in the direct sum decomposition of the double complex $(A^{\bullet,\bullet},\del,\delbar)$ are completely characterised by the invariants $h^{p,q}_\delbar$, $h^{p,q}_A$, $b^k$ and $h^k_{p,q}$ for all $p,q,k$.
\end{theorem}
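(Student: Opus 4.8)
The plan is to recast the statement as the injectivity of a single linear map. By Remark~\ref{rmk cohom zigzags} and Remark~\ref{rmk cohom bigolin zigzags}, each of the listed cohomological numbers is an explicit nonnegative integer combination of the seventeen multiplicities $A,B,\dots,S$; together these relations define a linear map $\Phi\colon\Z^{17}\to\Z^{N}$ sending the multiplicity vector of a double complex to the vector of its invariants $h^{p,q}_\delbar,h^{p,q}_A,b^k,h^k_{p,q}$. Since the invariants of a given compact complex $3$-manifold are exactly $\Phi$ applied to its uniquely determined multiplicity vector, the theorem is equivalent to $\Phi$ being injective, i.e.\ to $\Phi$ having rank $17$. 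I would prove this constructively, by producing a left inverse: expressing every multiplicity as an explicit integer combination of the invariants. Before starting, I note two reductions. First, the $\del$-numbers and Bott--Chern numbers appearing in Remark~\ref{rmk cohom zigzags} are themselves determined by the hypothesised data through the identities $h^{p,q}_\del=h^{q,p}_\delbar$ and $h^{p,q}_{BC}=h^{3-p,3-q}_A$ of that remark, so the full list of classical relations is available; second, by Remark~\ref{rmk cohom bigolin zigzags} the only Bigolin numbers not already reducing to Betti, Dolbeault or Aeppli numbers are the seven $h^1_{1,1},h^1_{1,2},h^2_{1,2},h^3_{1,3},h^3_{2,2},h^3_{2,3},h^4_{2,3}$, so these together with the classical numbers constitute the entire data of $\Phi$.

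The elimination then proceeds by peeling off multiplicities in a carefully chosen order. First I would read off the multiplicities already isolated by classical invariants: $A=h^{0,1}_{BC}$ and $R=h^{0,3}_\delbar$, then $B=h^{0,1}_\del-A$ and $G=b^1-2A$. At this point the purely classical data stalls --- as recorded after Remark~\ref{rmk cohom zigzags}, those relations form a matrix of rank only $14$, leaving three degrees of freedom --- and the Bigolin numbers are precisely what resolves them. Using $h^1_{1,2}=2A+B+D+G$ I would solve for $D$, recover $F$ from $h^{0,1}_\delbar=A+D+F+G$, extract $E$ from $h^1_{1,1}=2A+2B+2D+E+G$, and then $C$ from $h^{0,1}_A$. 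This closes all ``$1$-form'' zigzags $A,\dots,G$, after which $H=h^{0,2}_\del-B-C-D-E$.

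For the ``$2$-form'' zigzags the same bookkeeping continues. Comparing $h^{1,2}_\delbar=I+M+P+Q+S$ with the Bigolin number $h^3_{1,3}=M+P+Q+R+S$ isolates $I$; combining the Bigolin numbers $h^3_{2,2}$ and $h^3_{2,3}$ with the now-known $I$ and $R$ yields $M+N$; then $L$ falls out of $h^{0,2}_\delbar$, $O$ out of $h^{1,1}_{BC}$, and $P$ out of $h^{1,1}_\delbar$. Finally, contrasting $h^{1,2}_{BC}$ with the Aeppli number $h^{1,1}_A$ isolates $S$, whence $Q=\tfrac12(b^3-2R-2S)$, and then $M$ (hence $N$) follows from $h^3_{1,3}$. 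Each step is a subtraction of already-determined quantities, so all seventeen multiplicities are recovered as integer combinations of the invariants and $\Phi$ is injective.

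The hard part will be organising the elimination so that at every stage the quantity to be solved for stands alone once the previously determined multiplicities are subtracted; this is exactly where the choice of which Bigolin number to deploy matters, since the three kernel directions of the classical system are precisely the combinations visible only to $h^1_{1,1},h^1_{1,2},h^3_{1,3},h^3_{2,2},h^3_{2,3}$. Consistency of the resulting over-determined system (some $22$ relations for $17$ unknowns) requires no separate check, because the invariants come from an actual square-and-zigzag decomposition; rather, the relations left unused by the inversion --- for instance $h^4_{2,3}$, $b^2$ and $h^2_{1,2}$ --- must be automatic consequences of those used, and verifying this serves as a cross-check. The sharpness statements of Remarks~\ref{remark necessity Bigolin} and~\ref{remark necessity of hodge numbers} confirm that no proper subfamily of these invariants suffices, so the peeling genuinely needs the Bigolin data.
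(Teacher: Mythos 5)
Your proposal is correct and follows essentially the same route as the paper: both reduce the theorem to the statement that the linear map from the 17 zigzag multiplicities to the invariants of Remarks \ref{rmk cohom zigzags} and \ref{rmk cohom bigolin zigzags} has rank 17, which the paper establishes by computing the rank of the $22\times 17$ matrix of Table \ref{table 22x17} (and exhibiting an explicit inverse in Table \ref{table inverse}), while you establish it by an explicit step-by-step elimination producing a left inverse. I checked each of your elimination steps against the paper's linear relations (including $I=h^{1,2}_\delbar-h^3_{1,3}+R$, $M+N=h^3_{2,2}-h^3_{2,3}-I+R$, and $S=h^{1,2}_{BC}-h^{1,1}_A-I+L+O+P$) and they are all valid, so your argument is a sound, hand-verifiable version of the paper's rank computation.
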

\begin{proof}
\begin{table}[!ht]
\captionof{table}{Contribution of zigzags to cohomology}
\label{table 22x17}
    \centering
    \begin{tabular}{|l|l|l|l|l|l|l|l|l|l|l|l|l|l|l|l|l|l|}
    \hline
         &     A & B & C & D & E & F & G & H & I & L & M & N & O & P & Q & R & S \\ \hline
        $h^{0,1}_\delbar$ & 1 & 0 & 0 & 1 & 0 & 1 & 1 & 0 & 0 & 0 & 0 & 0 & 0 & 0 & 0 & 0 & 0 \\ \hline
        $h^{0,1}_\del$ & 1 & 1 & 0 & 0 & 0 & 0 & 0 & 0 & 0 & 0 & 0 & 0 & 0 & 0 & 0 & 0 & 0 \\ \hline
        $h^{0,1}_{BC}$ & 1 & 0 & 0 & 0 & 0 & 0 & 0 & 0 & 0 & 0 & 0 & 0 & 0 & 0 & 0 & 0 & 0 \\ \hline
        $h^{0,1}_A$ & 1 & 1 & 1 & 2 & 1 & 1 & 1 & 0 & 0 & 0 & 0 & 0 & 0 & 0 & 0 & 0 & 0 \\ \hline
        $b^1$ &     2 & 0 & 0 & 0 & 0 & 0 & 1 & 0 & 0 & 0 & 0 & 0 & 0 & 0 & 0 & 0 & 0 \\ \hline
        $h^1_{1,1}$ & 2 & 2 & 0 & 2 & 1 & 0 & 1 & 0 & 0 & 0 & 0 & 0 & 0 & 0 & 0 & 0 & 0 \\ \hline
        $h_{1,2}^1$ & 2 & 1 & 0 & 1 & 0 & 0 & 1 & 0 & 0 & 0 & 0 & 0 & 0 & 0 & 0 & 0 & 0 \\ \hline
        $h^{0,2}_\delbar$ & 0 & 0 & 0 & 0 & 0 & 0 & 0 & 1 & 1 & 1 & 1 & 1 & 0 & 0 & 0 & 0 & 0 \\ \hline
        $h^{0,2}_\del$ & 0 & 1 & 1 & 1 & 1 & 0 & 0 & 1 & 0 & 0 & 0 & 0 & 0 & 0 & 0 & 0 & 0 \\ \hline
        $b^2$ &     0 & 0 & 2 & 0 & 1 & 0 & 0 & 2 & 0 & 2 & 0 & 1 & 1 & 0 & 0 & 0 & 0 \\ \hline
        $h^{1,1}_\delbar$ & 0 & 0 & 1 & 0 & 0 & 1 & 0 & 0 & 0 & 1 & 0 & 0 & 1 & 1 & 0 & 0 & 0 \\ \hline
        $h^{1,1}_{BC}$ & 0 & 0 & 2 & 2 & 1 & 2 & 1 & 0 & 0 & 0 & 0 & 0 & 1 & 0 & 0 & 0 & 0 \\ \hline
        $h^{1,1}_A$ & 0 & 0 & 0 & 0 & 0 & 0 & 0 & 0 & 0 & 2 & 2 & 1 & 1 & 2 & 1 & 0 & 0 \\ \hline
        $h_{1,2}^2$ & 0 & 0 & 1 & 0 & 0 & 0 & 0 & 1 & 0 & 2 & 1 & 1 & 1 & 1 & 0 & 0 & 0 \\ \hline
        $h^{0,3}_\delbar$ & 0 & 0 & 0 & 0 & 0 & 0 & 0 & 0 & 0 & 0 & 0 & 0 & 0 & 0 & 0 & 1 & 0 \\ \hline
        $h^{1,2}_\delbar$ & 0 & 0 & 0 & 0 & 0 & 0 & 0 & 0 & 1 & 0 & 1 & 0 & 0 & 1 & 1 & 0 & 1 \\ \hline
        $h^{1,2}_{BC}$ & 0 & 0 & 0 & 0 & 0 & 0 & 0 & 0 & 1 & 1 & 2 & 1 & 0 & 1 & 1 & 0 & 1 \\ \hline
        $b^3$ &     0 & 0 & 0 & 0 & 0 & 0 & 0 & 0 & 0 & 0 & 0 & 0 & 0 & 0 & 2 & 2 & 2 \\ \hline
        $h_{1,3}^3$ & 0 & 0 & 0 & 0 & 0 & 0 & 0 & 0 & 0 & 0 & 1 & 0 & 0 & 1 & 1 & 1 & 1 \\ \hline
        $h_{2,2}^3$ & 0 & 0 & 0 & 0 & 0 & 0 & 0 & 0 & 2 & 0 & 2 & 1 & 0 & 0 & 2 & 0 & 2 \\ \hline
        $h_{2,3}^3$ & 0 & 0 & 0 & 0 & 0 & 0 & 0 & 0 & 1 & 0 & 1 & 0 & 0 & 0 & 2 & 1 & 2 \\ \hline
        $h_{2,3}^4$ & 0 & 0 & 2 & 1 & 1 & 1 & 0 & 1 & 0 & 1 & 0 & 0 & 1 & 0 & 0 & 0 & 0 \\ \hline
    \end{tabular}
\end{table}
By Remarks \ref{rmk cohom zigzags} and \ref{rmk cohom bigolin zigzags}, for all $p,q,k$ we know the linear expressions of the invariants $h^{p,q}_\delbar$, $h^{p,q}_A$, $b^k$ and $h^k_{p,q}$ in functions of the multiplicities A,B,$\dots$,S of the zigzags in the direct sum decomposition of the double complex $(A^{\bullet,\bullet},\del,\delbar)$. We write these linear expressions in matrix form in Table \ref{table 22x17}. We obtain a 22x17 matrix $T$ whose rank is 17, proving the claim.
\end{proof}

\begin{remark}[Contribution of cohomological invariants to zigzags]\label{remark inverse}
By Remark \ref{remark_how_many} there are 7 invariants $h^k_{p,q}$ providing information in complex dimension 3. By Corollary \ref{corollary relation betti dolbeault}, the 15x17 matrix $V$ obtained by removing the 7 rows corresponding to $h^k_{p,q}$ from the 22x17 matrix $T$ of Table \ref{table 22x17} has rank 14. Therefore, at least 3 invariants $h^k_{p,q}$ are a priori needed to deduce the multiplicities of all zigzags. We choose to consider $h^1_{1,1}$ and $h^1_{1,2}$ for low degree reasons, then only $h^3_{2,3}$ or $h^4_{2,3}$ provides the maximum rank, and among those we choose $h^4_{2,3}$.

Summing up, we remove from $T$ the rows corresponding to $b^3$, $h^2_{1,2}$, $h^3_{1,3}$, $h^3_{2,2}$, $h^3_{2,3}$, obtaining a 17x17 matrix $U$ of rank 17. 
This matrix is invertible with inverse given by Table \ref{table inverse}. This table can be used to write down all the zigzags of a double complex, knowing Betti (except for $b^3$), Dolbeault and Aeppli numbers plus Bigolin numbers $h^1_{1,1}$, $h^1_{1,2}$ and $h^4_{2,3}$. One only has to multiply the matrix $U^{-1}$ with a vector containing the requested cohomological invariants. This is done in Remark \ref{rmk flavi}.

\begin{table}[!ht]
\captionof{table}{Contribution of cohomology to zigzags}\label{table inverse}
    \centering
    \rotatebox{270}{
    \begin{tabular}{|l|l|l|l|l|l|l|l|l|l|l|l|l|l|l|l|l|l|}
    \hline
         & $h^{0,1}_\delbar$ & $h^{0,1}_\del$ & $h^{0,1}_{BC}$ & $h^{0,1}_A$ & $b^1$ & $h_{1,1}^1$ & $h_{1,2}^1$ & $h^{0,2}_\delbar$ & $h^{0,2}_\del$ & $b^2$ & $h^{1,1}_\delbar$ & $h^{1,1}_{BC}$ & $h^{1,1}_A$ & $h^{0,3}_\delbar$ & $h^{1,2}_\delbar$ & $h^{1,2}_{BC}$ & $h_{2,3}^4$ \\ \hline
        A & 0 & 0 & 1 & 0 & 0 & 0 & 0 & 0 & 0 & 0 & 0 & 0 & 0 & 0 & 0 & 0 & 0 \\ \hline
        B & 0 & 1 & -1 & 0 & 0 & 0 & 0 & 0 & 0 & 0 & 0 & 0 & 0 & 0 & 0 & 0 & 0 \\ \hline
        C & -1 & 0 & 0 & 1 & 0 & -1 & 1 & 0 & 0 & 0 & 0 & 0 & 0 & 0 & 0 & 0 & 0 \\ \hline
        D & 0 & -1 & 1 & 0 & -1 & 0 & 1 & 0 & 0 & 0 & 0 & 0 & 0 & 0 & 0 & 0 & 0 \\ \hline
        E & 0 & 0 & 0 & 0 & 1 & 1 & -2 & 0 & 0 & 0 & 0 & 0 & 0 & 0 & 0 & 0 & 0 \\ \hline
        F & 1 & 1 & 0 & 0 & 0 & 0 & -1 & 0 & 0 & 0 & 0 & 0 & 0 & 0 & 0 & 0 & 0 \\ \hline
        G & 0 & 0 & -2 & 0 & 1 & 0 & 0 & 0 & 0 & 0 & 0 & 0 & 0 & 0 & 0 & 0 & 0 \\ \hline
        H & 1 & 0 & 0 & -1 & 0 & 0 & 0 & 0 & 1 & 0 & 0 & 0 & 0 & 0 & 0 & 0 & 0 \\ \hline
        I & -1 & 0 & 0 & 1 & 0 & 0 & 0 & 1 & -1 & 0 & 0 & 0 & 0 & 0 & 1 & -1 & 0 \\ \hline
        L & 0 & 0 & -1 & 1 & 0 & 0 & 0 & 0 & -1 & 0 & 0 & -1 & 0 & 0 & 0 & 0 & 1 \\ \hline
        M & 0 & 0 & -1 & -1 & 1 & 0 & 0 & 0 & 1 & -1 & 0 & 0 & 0 & 0 & -1 & 1 & 1 \\ \hline
        N & 0 & 0 & 2 & 0 & -1 & 0 & 0 & 0 & 0 & 1 & 0 & 1 & 0 & 0 & 0 & 0 & -2 \\ \hline
        O & 0 & 0 & 0 & -2 & 0 & 1 & 0 & 0 & 0 & 0 & 0 & 1 & 0 & 0 & 0 & 0 & 0 \\ \hline
        P & 0 & -1 & 1 & 0 & 0 & 0 & 0 & 0 & 1 & 0 & 1 & 0 & 0 & 0 & 0 & 0 & -1 \\ \hline
        Q & 0 & 2 & 0 & 2 & -1 & -1 & 0 & 0 & -2 & 1 & -2 & 0 & 1 & 0 & 2 & -2 & 0 \\ \hline
        R & 0 & 0 & 0 & 0 & 0 & 0 & 0 & 0 & 0 & 0 & 0 & 0 & 0 & 1 & 0 & 0 & 0 \\ \hline
        S & 1 & -1 & 0 & -2 & 0 & 1 & 0 & -1 & 1 & 0 & 1 & 0 & -1 & 0 & -1 & 2 & 0 \\ \hline
    \end{tabular}
    }
\end{table}
\end{remark}

\begin{remark}[Necessity of Bigolin numbers $h^k_{p,q}$ in Theorem \ref{thm zigzags 3manifolds}]\label{remark necessity Bigolin}
By \cite{A} there are complex structures on the same smooth 6-manifold with the same numbers $h^{p,q}_\delbar$ (and clearly $b^k$) but different $h^{p,q}_A$. Therefore, we already know that the statement of Theorem \ref{thm zigzags 3manifolds} is false if we remove $h^{p,q}_A$ and $h^k_{p,q}$ from it. 

Actually, we can show that the statement of Theorem \ref{thm zigzags 3manifolds} is false if we remove $h^k_{p,q}$ from it. By \cite[Section 9]{Ste22}, for every zigzag A,B,C,E,$\dots$,S (all except for D, for which the result is not known) there is a compact complex manifold where that zigzag occurs in the double complex. Taking the disjoint union of these manifolds, we obtain a compact complex manifold $M$ which is not connected where all the 16 zigzags A,B,C,E,$\dots$,S occur in the double complex while D does not occur. Removing the column corresponding to D and the row corresponding to $h^1_{1,2}$ from the 17x17 matrix $U$ of Remark \ref{remark inverse}, we are left with a 16x16 matrix $U_1$ whose rank is 16. In particular, knowing only the invariants $h^{p,q}_\delbar$, $h^{p,q}_A$, $b^k$ is not sufficient to compute all the multiplicities A,B,C,E,$\dots$,S. Therefore this manifold $M$ provides the counterexample we need.

More is true. In fact, removing from $U_1$ the rows corresponding to $h^1_{1,1}$ and $h^4_{2,3}$ (this is the same as removing from the matrix $V$ of Remark \ref{remark inverse} the row corresponding to $b^3$ and the column corresponding to $D$), we obtain a 14x16 matrix $U_2$ of rank 14. The matrix $U_2$ represents a linear map sending a vector with multiplicities A,B,C,E,$\dots$,S to Betti, Dolbeault and Aeppli numbers. Its kernel is 2-dimensional, therefore again by \cite[Section 9]{Ste22} we can construct two compact complex manifolds with same Betti, Dolbeault and Aeppli numbers but different Bigolin numbers.

The above problems remain open for connected compact complex manifolds.
\end{remark}

\begin{remark}[Bigolin numbers $h^k_{p,q}$ alone do not fully describe zigzags - Necessity of Hodge numbers $h^{p,q}_\delbar$ in Theorem \ref{thm zigzags 3manifolds}]\label{remark necessity of hodge numbers}
From Section \ref{section_complex} we know that Aeppli (and dually Bott-Chern) cohomology $H^{p,q}_A$, de Rham cohomology $H^k_{dR}$, and the particular cases of Dolbeault and $\del$-cohomologies $H^{0,k}_\delbar$ and $H^{k,0}_\del$ can be seen as particular cases of Bigolin cohomology $H^k_{p,q}$. One could therefore ask if Bigolin numbers $h^k_{p,q}$ alone are sufficient to characterise the multiplicities of all zigzags in a similar fashion of Theorem \ref{thm zigzags 3manifolds}. Actually, this is equivalent to asking if the statement of Theorem \ref{thm zigzags 3manifolds} continues to hold if we remove Hodge numbers $h^{p,q}_\delbar$ from it. To address such a problem, we remove from the 22x17 matrix $T$ of Table \ref{table 22x17} the rows corresponding to $h^{0,1}_\del$, $h^{1,1}_\delbar$ and $h^{1,2}_\delbar$, which are the only invariants in the matrix not obtainable as $h^k_{p,q}$, finding a 19x17 matrix of rank 15. The same manifold $M$ of Remark \ref{remark necessity Bigolin} then provides the counterexample we need, since all the 16 zigzags A,B,C,E,$\dots$,S occur in its double complex.
\end{remark}

\begin{remark}
Restricted to complex dimension 3, \cite[Problem 11.2]{Ste22} asks to construct a compact complex manifold of complex dimension 3 where the zigzag $D$ appears in the direct sum decomposition of the double complex. Interpreting $D$ as a multiplicity, this is equivalent to $D>0$. We can use Table \ref{table inverse} to read
\[
D=-h^{0,1}_\del+h^{0,1}_{BC}-b^1+h^1_{1,2},
\]
which provides an equivalent rephrasing of the above problem.
\end{remark}

We end this section by noticing that the five linear relations between $h^{p,q}_\delbar$, $h^{p,q}_A$, $b^k$ and $h^k_{p,q}$ to be expected from the rank of the 22x17 matrix $T$ being 17 all come from equalities bewteen Euler characteristics.

\begin{theorem}\label{thm linear relations bigolin euler}
Given a compact complex 3-manifold, the linear expressions describing $h^2_{1,2}$, $h^3_{1,3}$, $h^3_{2,2}$ and $h^3_{2,3}$ in functions of other invariants $h^{p,q}_\delbar$, $h^{p,q}_A$, $b^k$ and $h^k_{p,q}$ which can be obtained from Tables \ref{table 22x17} and \ref{table inverse} are precisely the ones given by equation \eqref{equation euler bigolin dolbeault}.
\end{theorem}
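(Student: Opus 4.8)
The plan is to realise each of the four asserted relations as the single Euler characteristic identity \eqref{equation euler bigolin dolbeault} attached to one specific Bigolin complex, and then to finish by a dimension count. Concretely, I expect the four relevant complexes (for $n=3$) to be $\B^\bullet_{1,2}$, $\B^\bullet_{1,3}$, $\B^\bullet_{2,2}$ and $\B^\bullet_{2,3}$: these are exactly the complexes whose Euler characteristic $\chi_{p,q}=\sum_k(-1)^kh^k_{p,q}$ contains one of the four target invariants $h^2_{1,2}$, $h^3_{1,3}$, $h^3_{2,2}$, $h^3_{2,3}$ as a summand. The only remaining complex carrying an interesting invariant, $\B^\bullet_{1,1}$, has $p+q=n-1$, so $\chi_{1,1}=0$ by Remark \ref{remark euler char}; one checks this degenerates to the already-known equality $h^{1,1}_A=h^{2,2}_{BC}$ and yields nothing new.

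First I would, for each of the four pairs, write out $\chi_{p,q}$ as the full alternating sum of the $h^k_{p,q}$ (over $0\le k\le p+q$ when $q=n$, where the complex is a half-complex, and over $0\le k\le 2n-1$ otherwise), and then rewrite every summand except the target invariant as a listed invariant. Each such boundary term falls, by the ranges of Propositions \ref{proposition cohomology}, \ref{proposition cohomological equalities} and \ref{proposition bigolin cohomology}, into a known class: a de Rham number $b^0,b^1,b^2$; an Aeppli or Bott-Chern number via $h^{p+q}_{p,q}=h^{p,q}_A$ and $h^{p+q+1}_{p,q}=h^{p+1,q+1}_{BC}$; or a Dolbeault/$\del$ number. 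Hodge-$*$ duality (Corollary \ref{corollary hodge numbers}) and conjugation then bring each term onto a row of Table \ref{table 22x17}. As a sample of the bookkeeping, for $(p,q)=(1,2)$ the two end terms $h^0_{1,2}=b^0$ and $h^5_{1,2}=b^0$ cancel and one is left with
\[
\chi_{1,2}=-h^1_{1,2}+h^2_{1,2}-h^{1,2}_A+h^{0,1}_A.
\]

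Next I would apply equation \eqref{equation euler bigolin dolbeault} (proved in Remark \ref{remark euler char}, using $\chi_{p,q}=-\chi_{n-q-1,n-p-1}$ since all four pairs satisfy $p+q\ge n$) to replace the left-hand side by the prescribed alternating sum of the Dolbeault Euler characteristics $\chi_p=\sum_q(-1)^qh^{p,q}_\delbar$; for the four pairs this gives, respectively, $\chi_{1,2}=-\chi_1$, $\chi_{1,3}=\chi_0-\chi_1$, $\chi_{2,2}=\chi_2-\chi_1$ and $\chi_{2,3}=\chi_0-\chi_1+\chi_2$. Solving each resulting equation for its target invariant produces a linear expression in the listed invariants, possibly involving the kept Bigolin numbers $h^1_{1,2}$ or $h^4_{2,3}$ and, in the $(1,3)$ case, the removed invariant $h^2_{1,2}$ (all of which the statement explicitly allows among the admissible ``other invariants'').

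Finally I would check that these four relations exhaust the rank deficiency. Each Euler relation singles out a distinct one of $h^2_{1,2}, h^3_{1,3}, h^3_{2,2}, h^3_{2,3}$ with coefficient $\pm1$; together with the Fr\"olicher relation of Corollary \ref{corollary relation betti dolbeault} (which singles out $b^3$ and is itself the equality of the de Rham and Dolbeault Euler characteristics), they are five linearly independent elements of the left kernel of the matrix $T$. Since $T$ is $22\times17$ of rank $17$, that left kernel is $5$-dimensional, so the five relations form a basis of it; hence every linear relation among the invariants is a combination of Euler characteristic equalities, and in particular the expressions for $h^2_{1,2}, h^3_{1,3}, h^3_{2,2}, h^3_{2,3}$ read off from Tables \ref{table 22x17} and \ref{table inverse} must coincide with those coming from \eqref{equation euler bigolin dolbeault}. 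The main obstacle is the second step: correctly identifying every boundary term $h^k_{p,q}$ with a row of Table \ref{table 22x17}, which is delicate for the half-complexes $\B^\bullet_{1,3}$ and $\B^\bullet_{2,3}$ (where $q=n$), since their top-degree terms must be pushed through a chain of duality and conjugation isomorphisms before they match the table exactly.
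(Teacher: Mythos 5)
Your steps (1)--(2) are essentially correct: the boundary identifications (e.g.\ $\chi_{1,2}=-h^1_{1,2}+h^2_{1,2}-h^{1,2}_A+h^{0,1}_A$), the four identities $\chi_{1,2}=-\chi_1$, $\chi_{1,3}=\chi_0-\chi_1$, $\chi_{2,2}=-\chi_1+\chi_2$, $\chi_{2,3}=\chi_0-\chi_1+\chi_2$, and the observation that the $(1,3)$ relation drags in $h^2_{1,2}$ (via $h^2_{1,3}=h^2_{1,2}$) all agree with the paper, and your final pivot argument is valid \emph{given} its hypotheses. The genuine gap is in step (3), at the words ``they are five linearly independent elements of the left kernel of the matrix $T$''. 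You check independence, but you never justify membership in the left kernel, and this does not follow from the relations holding on every compact complex $3$-manifold. A linear relation among the $22$ invariants holds on all such manifolds as soon as it vanishes on the span of the \emph{realizable} multiplicity vectors, whereas lying in the left kernel of $T$ means vanishing on all $17$ columns; these are not known to be the same condition, precisely because it is open whether the zigzag $D$ occurs on any compact complex $3$-manifold (\cite[Problem 11.2]{Ste22}, the very point exploited in Remark \ref{remark necessity Bigolin}). If no manifold realizes $D$, all realizable vectors lie in the hyperplane $\{D=0\}$, and the space of relations vanishing there is the left kernel of the $22\times 16$ matrix $T'$ obtained by deleting the $D$-column, which has rank $16$ and hence a $6$-dimensional left kernel --- strictly larger than the $5$-dimensional left kernel of $T$. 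Worse, membership of the four Euler relations in the left kernel of $T$ is essentially \emph{equivalent} to the theorem you are proving (the table expressions lie in the left kernel by construction, and your own pivot argument converts between the two statements), so asserting it without proof comes close to assuming the conclusion.

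This per-column statement is genuinely delicate, not a formality: equation \eqref{equation euler bigolin dolbeault} is \emph{false} for an individual zigzag. For instance, a single dot in position $(0,2)$ contributes $+1$ to $\chi_{1,2}$ but $0$ to $-\chi_1$; the identity $\chi_{1,2}=-\chi_1$ only holds after summing over the full conjugation--duality orbit $\{(0,2),(2,0),(3,1),(1,3)\}$, which is what a column of Table \ref{table 22x17} encodes. So to close the gap you must either (a) verify the four relations column by column on Table \ref{table 22x17} --- a computation of the same order as the paper's proof, which simply computes the product $T_1U^{-1}$ and recognizes the outcome, so your route would then no longer be shorter --- or (b) prove the combinatorial lemma that \eqref{equation euler bigolin dolbeault} holds for every bounded finite-dimensional double complex whose dimension function is invariant under conjugation $(r,s)\mapsto(s,r)$ and duality $(r,s)\mapsto(n-r,n-s)$. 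Statement (b) is true: since Euler characteristics ignore the differentials, both sides become signed sums $\sum(-1)^{r+s}\dim$ over regions of the $(r,s)$-square, and an inclusion--exclusion computation using the two symmetries shows they agree; this would turn your dimension count into a complete and genuinely different proof, but it is proved neither in your proposal nor in the paper, so as written the argument is incomplete.
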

\begin{proof}
The 17x17 matrix $U^{-1}$ of Table \ref{table inverse} expresses the contribution of 17 cohomological invariants to the multiplicities of zigzags.
Call $T_1$ the 4x17 matrix which is obtained from the 22x17 matrix $T$ of Table \ref{table 22x17} by removing all the rows except for the ones corresponding to $h^2_{1,2}$, $h^3_{1,3}$, $h^3_{2,2}$ and $h^3_{2,3}$.

If we multiply $T_1$ by $U^{-1}$ we find a 4x17 matrix expressing $h^2_{1,2}$, $h^3_{1,3}$, $h^3_{2,2}$ and $h^3_{2,3}$ in functions of the other cohomological invariants.
We report these linear expressions:
\begin{align*}
h^2_{1,2}&=-h^{0,1}_\del-h^{0,1}_A+h^1_{1,2}+h^{0,2}_\del+h^{1,1}_\delbar-h^{1,2}_\delbar+h^{1,2}_{BC},\\
h^3_{1,3}&=h^{0,1}_\delbar-h^{0,1}_A-h^{0,2}_\delbar+h^{0,2}_\del+h^{0,3}_\delbar+h^{1,2}_{BC},\\
h^3_{2,2}&=2h^{0,1}_\del-b^1-2h^{0,2}_\del+b^2-2h^{1,1}_\delbar+h^{1,1}_{BC}+2h^{1,2}_\delbar,\\
h^3_{2,3}&=h^{0,1}_\delbar+2h^{0,1}_\del-h^{0,1}_{BC}-b^1-h^{0,2}_\delbar-2h^{0,2}_\del+b^2-2h^{1,1}_\delbar+h^{0,3}_{\delbar}+2h^{1,2}_\delbar+h^4_{2,3}.
\end{align*}
These four linear expressions are equivalent to
\begin{align*}
\chi_{1,2}&=-\chi_1,\\
\chi_{1,3}&=\chi_0-\chi_1,\\
\chi_{2,2}&=-\chi_1+\chi_2,\\
\chi_{2,3}&=\chi_0-\chi_1+\chi_2.
\end{align*}
which is precisely \eqref{equation euler bigolin dolbeault} after the identities $\chi_{1,2}=-\chi_{0,1}$, $\chi_{1,3}=-\chi_{-1,1}$, $\chi_{2,2}=-\chi_{0,0}$ and $\chi_{2,3}=-\chi_{-1,0}$ of Remark \ref{remark euler char}.
\end{proof}

\begin{remark}
The 22x17 matrix $T$ relating multiplicities of zigzags to cohomological invariants has rank 17. The five linear relations between $h^{p,q}_\delbar$, $h^{p,q}_A$, $b^k$ and $h^k_{p,q}$ to be expected from the rank being 17 all come from equalities bewteen Euler characteristics: one is given by Corollary \ref{corollary relation betti dolbeault} and four are given by Theorem \ref{thm linear relations bigolin euler}.
\end{remark}

\section{An explicit computation: \\the small deformations of the Iwasawa manifold}\label{sec_iwasawa}

We denote by $\mathbb{H}(3;\C)$ the three dimensional Heisenberg group over $\C$
\[
\mathbb{H}(3;\C):=\left\{
\begin{pmatrix}
1 & z^1 & z^3\\
0 & 1 & z^2\\
0& 0& 1
\end{pmatrix}
\in GL(3,\C)\,:\, z^1,z^2,z^3\in\C
\right\}
\]
with the product induced by matrix multiplication, and by $\mathbb{H}(3;\Z[i])$ the subgroup of $\mathbb{H}(3;\C)$ where $z^1,z^2,z^3$ lie in $\Z[i]$. One can check that $\mathbb{H}(3;\C)$ is a connected simply connected complex nilpotent Lie group, with the complex structure induced by $\C^3$. There exists a global frame of left invariant holomorphic $(1,0)$-forms on $\mathbb{H}(3;\C)$ given by
\begin{align*}
\phi^1:=dz^1, && \phi^2:=dz^2, && \phi^3:=dz^3-z^1dz^2,
\end{align*}
with structure equations
\begin{align*}
d\phi^1=0, && d\phi^2=0, && d\phi^3=-\phi^{12}.
\end{align*}

\begin{definition}[The Iwasawa manifold]
The Iwasawa manifold is defined as the compact manifold obtained by quotienting $\mathbb{H}(3;\C)$ by the left action of $\mathbb{H}(3;\Z[i])$
\[
\mathbb I_3:=\mathbb{H}(3;\Z[i])\backslash \mathbb{H}(3;\C),
\]
with the complex structure induced by $\mathbb{H}(3;\C)$. The global frame of left invariant holomorphic $(1,0)$-forms on $\mathbb{H}(3;\C)$ descends on the quotient $\mathbb I_3$, and will be denoted with the same symbols.
\end{definition}

We are interested in computing the Bigolin numbers $h^k_{p,q}$ of the small deformations of the Iwasawa manifold. Small deformations and Hodge numbers $h^{p,q}_\delbar$ were computed in \cite{N}, while Aeppli and Bott-Chern numbers $h^{p,q}_A$, $h^{p,q}_{BC}$ were computed in \cite{A}. It turns out that for the small deformations of the complex structure of $\mathbb I_3$, the inclusion
\[
\bigwedge^{p,q}\Lie(\mathbb{H}(3;\C))^*\hookrightarrow \bigwedge^{p,q}\mathbb I_3
\]
induces an isomorphism in Dolbeault, Aeppli and Bott-Chern cohomology, \textit{i.e.}, there exist left invariant (forms which are) representatives for every class in these cohomology spaces on $\mathbb I_3$. By a combination of the results in \cite[Lemma 2.3]{Ste} and either \cite[Proposition 12]{Ste21} or \cite[Theorem C]{Ste23}, it follows that the same result holds for the Bigolin cohomology $H^k_{p,q}$, thus reducing the computation of the invariants $h^k_{p,q}$ to a linear algebra problem.

The description of the small deformations of $\mathbb I_3$ will be as brief as possible, since the topic has been already treated meticulously in other papers like \cite{N,AT,A}. The small deformations are parametrised by a vector of complex numbers $t=(t_{11},t_{12},t_{21},t_{22},t_{31},t_{32})\in B(0,\epsilon)\subset\C$,
obtaining a global frame of $(1,0)$ forms $\phi^1_t$, $\phi^2_t$ and $\phi^3_t$ with structure equations
\[
d\phi^3_t=\sigma_{12}\phi^{12}_t+\sigma_{1\bar1}\phi^{1\bar1}_t+\sigma_{1\bar2}\phi^{1\bar2}_t+\sigma_{2\bar1}\phi^{2\bar1}_t+\sigma_{2\bar2}\phi^{2\bar2}_t,
\]
where each $\sigma_{AB}\in\C$ depends on $t$ and their first order asymptotic behaviour is
\[
\begin{cases}
\sigma_{12}=-1+o(|t|),\\
\sigma_{1\bar1}=t_{21}+o(|t|),\\
\sigma_{1\bar2}=t_{22}+o(|t|),\\
\sigma_{2\bar1}=-t_{11}+o(|t|),\\
\sigma_{2\bar2}=-t_{12}+o(|t|).
\end{cases}
\]
If we set
\begin{align*}
D(t):=\det\begin{pmatrix}
t_{11}&t_{12}\\t_{21}&t_{22}
\end{pmatrix}, &&
S(t):=\begin{pmatrix}
\c{\sigma_{1\bar1}}&\c{\sigma_{2\bar2}}&\c{\sigma_{1\bar2}}&\c{\sigma_{2\bar1}}\\
\sigma_{1\bar1}&\sigma_{2\bar2}&\sigma_{2\bar1}&\sigma_{1\bar2}
\end{pmatrix},
\end{align*}
then we can distinguish the following classes of small deformations:
\begin{enumerate}[(i)]
\item $t_{11}=t_{12}=t_{21}=t_{22}=0$;
\item $(t_{11}=t_{12}=t_{21}=t_{22})\ne(0,0,0,0)$ and $D(t)=0$;
\begin{enumerate}
\item[(ii.a)]$D(t)=0$ and $\rk S=1$;
\item[(ii.b)]$D(t)=0$ and $\rk S=2$;
\end{enumerate}
\item $D(t)\ne0$;
\begin{enumerate}
\item[(iii.a)]$D(t)\ne0$ and $\rk S=1$;
\item[(iii.b)]$D(t)\ne0$ and $\rk S=2$.
\end{enumerate}
\end{enumerate}
We will also need to know that for small deformations in class (i)
\begin{align*}
\sigma_{12}=-1, && \sigma_{1\bar1}=
\sigma_{1\bar2}=
\sigma_{2\bar1}=
\sigma_{2\bar2}=0,
\end{align*}
while for small deformations in class (ii) 
\begin{align*}
 (\sigma_{1\bar1},
\sigma_{1\bar2},
\sigma_{2\bar1},
\sigma_{2\bar2})\ne(0,0,0,0),
\end{align*}
and
\[
\tilde D(t):=\det\begin{pmatrix}
\sigma_{2\bar1}&\sigma_{2\bar2}\\\sigma_{1\bar1}&\sigma_{1\bar2}
\end{pmatrix}=0.
\]

We now proceed to compute the Bigolin numbers $h^{k}_{p,q}$ on the small deformations of $\mathbb I_3$.
Up to symmetries, by Remark \ref{remark_how_many} there are 7 invariants of the type $h^{k}_{p,q}$ to be determined. 
We draw the parts of the complexes $(\B^\bullet_{p,q},\delta^\bullet_{p,q})$ related to these 7 invariants $h^{k}_{p,q}$.
\begin{align*}
 h^1_{1,1} & & h^1_{1,2},h^2_{1,2} & & h^3_{1,3} \\
\begin{tikzpicture}[scale=0.5]
    \foreach \x in {0,1,...,3}{
    \draw (\x,0)node[below,font=\footnotesize] {\x};
    }
    \foreach \y in {0,1,...,3}{
    \draw (0,\y)node[left,font=\footnotesize] {\y};
    }
    \foreach \x in {0,1,...,3}{
    \foreach \y in {0,1,...,3}{
    \fill[black!30] (\x,\y) circle[radius=1pt];
    }}
    \foreach \x in {0,1,...,1}{
    \foreach \y in {0,1,...,1}{
    \fill[black!80] (\x,\y) circle[radius=2pt];
    }}
\end{tikzpicture}
& &
\begin{tikzpicture}[scale=0.5]
    \foreach \x in {0,1,...,3}{
    \draw (\x,0)node[below,font=\footnotesize] {\x};
    }
    \foreach \y in {0,1,...,3}{
    \draw (0,\y)node[left,font=\footnotesize] {\y};
    }
    \foreach \x in {0,1,...,3}{
    \foreach \y in {0,1,...,3}{
    \fill[black!30] (\x,\y) circle[radius=1pt];
    }}
    \foreach \x in {0,1,...,1}{
    \foreach \y in {0,1,...,2}{
    \fill[black!80] (\x,\y) circle[radius=2pt];
    }}
\end{tikzpicture}
& &
\begin{tikzpicture}[scale=0.5]
    \foreach \x in {0,1,...,3}{
    \draw (\x,0)node[below,font=\footnotesize] {\x};
    }
    \foreach \y in {0,1,...,3}{
    \draw (0,\y)node[left,font=\footnotesize] {\y};
    }
    \foreach \x in {0,1,...,3}{
    \foreach \y in {0,1,...,3}{
    \fill[black!30] (\x,\y) circle[radius=1pt];
    }}
    \foreach \x in {0,1,...,1}{
    \foreach \y in {0,1,...,3}{
    \fill[black!80] (\x,\y) circle[radius=2pt];
    }}
\end{tikzpicture}
\end{align*}

\begin{align*}
 h^3_{2,2} & & h^3_{2,3},h^4_{2,3}\\
\begin{tikzpicture}[scale=0.5]
    \foreach \x in {0,1,...,3}{
    \draw (\x,0)node[below,font=\footnotesize] {\x};
    }
    \foreach \y in {0,1,...,3}{
    \draw (0,\y)node[left,font=\footnotesize] {\y};
    }
    \foreach \x in {0,1,...,3}{
    \foreach \y in {0,1,...,3}{
    \fill[black!30] (\x,\y) circle[radius=1pt];
    }}
    \foreach \x in {0,1,...,2}{
    \foreach \y in {0,1,...,2}{
    \fill[black!80] (\x,\y) circle[radius=2pt];
    }}
\end{tikzpicture}
& &
\begin{tikzpicture}[scale=0.5]
    \foreach \x in {0,1,...,3}{
    \draw (\x,0)node[below,font=\footnotesize] {\x};
    }
    \foreach \y in {0,1,...,3}{
    \draw (0,\y)node[left,font=\footnotesize] {\y};
    }
    \foreach \x in {0,1,...,3}{
    \foreach \y in {0,1,...,3}{
    \fill[black!30] (\x,\y) circle[radius=1pt];
    }}
    \foreach \x in {0,1,...,2}{
    \foreach \y in {0,1,...,3}{
    \fill[black!80] (\x,\y) circle[radius=2pt];
    }}
\end{tikzpicture}
\end{align*}
For the class (i), we can actually compute these invariants just by looking to the double complex of left invariant forms on $\mathbb{H}(3;\C)$ of \cite[Fig. 1]{A} (cf. Proposition \ref{prop zigzag contribute to bigolin})
\begin{align*}
h^1_{1,1}=6, && h^1_{1,2}=5, && h^2_{1,2}=8, && h^3_{1,3}=7,\\
 && h^3_{2,2}=8, && h^3_{2,3}=9, && h^4_{2,3}=6.
\end{align*}
Since the case $t=0$ lies in class (i) and the numbers $h^k_{p,q}$ behaves upper semicontinuously \cite[Corollary D]{Ste}, these values are upper bounds for the invariants $h^k_{p,q}$ of classes (ii) and (iii).

We will compute $h^k_{p,q}$ of classes (ii) and (iii) by computing
\begin{align*}
\delta_{p,q}^k\alpha=0, && \delta_{n-q-1,n-p-1}^{2n-k-1}*\alpha=0
\end{align*}
for all left invariants forms $\alpha\in\B^k_{p,q}$. We choose the diagonal metric given by the fundamental form
\[
\omega=i(\phi^{1\bar1}+\phi^{2\bar2}+\phi^{3\bar3}),
\]
whose associated volume form is
\[
\frac{\omega^3}{3!}=i\phi^{123\bar1\bar2\bar3}.
\]

\subsection{$h^1_{1,1}$}\label{sec111}
Having in mind the picture
\begin{center}
\begin{tikzpicture}[scale=0.5]
    \foreach \x in {0,1,...,3}{
    \draw (\x,0)node[below,font=\footnotesize] {\x};
    }
    \foreach \y in {0,1,...,3}{
    \draw (0,\y)node[left,font=\footnotesize] {\y};
    }
    \foreach \x in {0,1,...,3}{
    \foreach \y in {0,1,...,3}{
    \fill[black!30] (\x,\y) circle[radius=1pt];
    }}
    \foreach \x in {0,1,...,1}{
    \foreach \y in {0,1,...,1}{
    \fill[black!80] (\x,\y) circle[radius=2pt];
    }}
\end{tikzpicture}
\end{center}
we set $\alpha=\alpha_1\oplus\alpha_2\in\B^1_{1,1}=A^{0,1}\oplus A^{1,0}$ and recall that $\alpha\in\H^{1}_{1,1}$ if and only if
\begin{align}\label{eq_harmonic_111}
\del\alpha_1+\delbar\alpha_2=0, && \delbar^*\alpha_1+\del^*\alpha_2=0.
\end{align}
We can write
\[
\alpha_1=A_{\bar1}\phi^{\bar1}_t+A_{\bar2}\phi^{\bar2}_t+A_{\bar3}\phi^{\bar3}_t,
\]
\[
\alpha_2=A_{1}\phi^{1}_t+A_{2}\phi^{2}_t+A_{3}\phi^{3}_t,
\]
\[
*\alpha_1=iA_{\bar1}\phi^{23\bar1\bar2\bar3}_t-iA_{\bar2}\phi^{13\bar1\bar2\bar3}_t+iA_{\bar3}\phi^{12\bar1\bar2\bar3}_t,
\]
\[
*\alpha_2=-iA_{1}\phi^{123\bar2\bar3}_t+iA_{2}\phi^{123\bar1\bar3}_t-iA_{3}\phi^{123\bar1\bar2}_t,
\]
and compute
\begin{align*}
\del\alpha_1+\delbar\alpha_2&=A_{\bar3}(-\c{\sigma_{1\bar1}}\phi^{1\bar1}-\c{\sigma_{2\bar1}}\phi^{1\bar2}-\c{\sigma_{1\bar2}}\phi^{2\bar1}-\c{\sigma_{2\bar2}}\phi^{2\bar2})\\
&+A_{3}(\sigma_{1\bar1}\phi^{1\bar1}+\sigma_{1\bar2}\phi^{1\bar2}+\sigma_{2\bar1}\phi^{2\bar1}+\sigma_{2\bar2}\phi^{2\bar2}),
\end{align*}
\[
\del*\alpha_1+\delbar*\alpha_2=0.
\]
Therefore $h^1_{1,1}=6-\rk S(t)$, \textit{i.e.}, $h^1_{1,1}=5$ in classes (ii.a) and (iii.a), while $h^1_{1,1}=4$ in classes (ii.b) and (iii.b).

\subsection{$h^1_{1,2}$}\label{sec112}
Having in mind the picture
\begin{center}
\begin{tikzpicture}[scale=0.5]
    \foreach \x in {0,1,...,3}{
    \draw (\x,0)node[below,font=\footnotesize] {\x};
    }
    \foreach \y in {0,1,...,3}{
    \draw (0,\y)node[left,font=\footnotesize] {\y};
    }
    \foreach \x in {0,1,...,3}{
    \foreach \y in {0,1,...,3}{
    \fill[black!30] (\x,\y) circle[radius=1pt];
    }}
    \foreach \x in {0,1,...,1}{
    \foreach \y in {0,1,...,2}{
    \fill[black!80] (\x,\y) circle[radius=2pt];
    }}
\end{tikzpicture}
\end{center}
we let $\alpha=\alpha_1\oplus\alpha_2\in\B^1_{1,2}=A^{0,1}\oplus A^{1,0}$ as in Subsection \ref{sec111}.
Then $\alpha\in\H^{1}_{1,2}$ if and only if it satisfies \eqref{eq_harmonic_111} plus the extra condition $\delbar\alpha_1=0$. Since
\[
\delbar\alpha_1=A_{\bar3}\c{\sigma_{12}}\phi^{\bar1\bar2},
\]
it follows $h^1_{1,2}=4$ for both classes (ii) and (iii).

\subsection{$h^2_{1,2}$}\label{sec212}
Having in mind the same picture of Subsection \ref{sec112}, we set $\alpha=\alpha_1\oplus\alpha_2\in\B^{2}_{1,2}=A^{0,2}\oplus A^{1,1}$ and recall that $\alpha\in\H^{2}_{1,2}$ if and only if
\begin{align}\label{eq_harmonic_212}
\del\alpha_1+\delbar\alpha_2=0, && \delbar^*\alpha_1+\del^*\alpha_2=0, &&
\delbar^*\alpha_2=0.
\end{align}
We can write
\[
\alpha_1=A_{\bar1\bar2}\phi^{\bar1\bar2}_t+A_{\bar1\bar3}\phi^{\bar1\bar3}_t+A_{\bar2\bar3}\phi^{\bar2\bar3}_t,
\]
\[
\alpha_2=\sum_{i,k=1}^3A_{j\bar k}\phi^{j\bar k}_t,
\]
\[
*\alpha_1=iA_{\bar1\bar2}\phi^{3\bar1\bar2\bar3}_t-iA_{\bar1\bar3}\phi^{2\bar1\bar2\bar3}_t+iA_{\bar2\bar3}\phi^{1\bar1\bar2\bar3}_t,
\]
\begin{align*}
*\alpha_2&=-iA_{1\bar1}\phi^{23\bar2\bar3}_t-iA_{2\bar2}\phi^{13\bar1\bar3}_t-iA_{3\bar3}\phi^{12\bar1\bar2}_t\\
&+iA_{1\bar2}\phi^{13\bar2\bar3}_t-iA_{1\bar3}\phi^{12\bar2\bar3}_t+iA_{2\bar3}\phi^{12\bar1\bar3}_t\\
&+iA_{2\bar1}\phi^{23\bar1\bar3}_t-iA_{3\bar1}\phi^{23\bar1\bar2}_t+iA_{3\bar2}\phi^{13\bar1\bar2}_t
\end{align*}
and compute
\begin{align*}
\del\alpha_1+\delbar\alpha_2&=A_{\bar1\bar3}(-\c{\sigma_{2\bar1}}\phi^{1\bar1\bar2}_t-\c{\sigma_{2\bar2}}\phi^{2\bar1\bar2}_t)+A_{\bar2\bar3}(\c{\sigma_{1\bar1}}\phi^{1\bar1\bar2}_t+\c{\sigma_{1\bar2}}\phi^{2\bar1\bar2}_t)\\
&+A_{3\bar1}(-\sigma_{1\bar2}\phi^{1\bar1\bar2}_t-\sigma_{2\bar2}\phi^{21\bar2}_t)+A_{3\bar2}(\sigma_{1\bar1}\phi^{1\bar1\bar2}_t+\sigma_{2\bar1}\phi^{2\bar1\bar2}_t)\\
&+A_{1\bar3}(-\c{\sigma_{12}}\phi^{1\bar1\bar2}_t)+A_{2\bar3}(-\c{\sigma_{12}}\phi^{2\bar1\bar2}_t)+A_{3\bar3}(\delbar\phi^3_t\wedge\phi^{\bar3}_t-\phi^3_t\wedge\delbar\phi^{\bar3}_t),
\end{align*}
\begin{align*}
\del*\alpha_1&+\delbar*\alpha_2=\\
&=\left(iA_{\bar1\bar2}(\sigma_{12})+iA_{1\bar1}(-\sigma_{1\bar1})+iA_{2\bar2}(-\sigma_{2\bar2})+iA_{1\bar2}(-\sigma_{2\bar1})+iA_{2\bar1}(-\sigma_{1\bar2})\right)\phi^{12\bar1\bar2\bar3}_t,
\end{align*}
\[
\del*\alpha_2=\left(iA_{1\bar1}(-\c{\sigma_{1\bar1}})+iA_{2\bar2}(-\c{\sigma_{2\bar2}})+iA_{1\bar2}(-\c{\sigma_{1\bar2}})+iA_{2\bar1}(-\c{\sigma_{2\bar1}})\right)\phi^{123\bar1\bar2}_t.
\]
Therefore $h^{2}_{1,2}=12-5=7$ for both classes (ii) and (iii).

\subsection{$h^3_{1,3}$}\label{sec313}
Having in mind the picture
\begin{center}
\begin{tikzpicture}[scale=0.5]
    \foreach \x in {0,1,...,3}{
    \draw (\x,0)node[below,font=\footnotesize] {\x};
    }
    \foreach \y in {0,1,...,3}{
    \draw (0,\y)node[left,font=\footnotesize] {\y};
    }
    \foreach \x in {0,1,...,3}{
    \foreach \y in {0,1,...,3}{
    \fill[black!30] (\x,\y) circle[radius=1pt];
    }}
    \foreach \x in {0,1,...,1}{
    \foreach \y in {0,1,...,3}{
    \fill[black!80] (\x,\y) circle[radius=2pt];
    }}
\end{tikzpicture}
\end{center}
 we set $\alpha=\alpha_1\oplus\alpha_2\in\B^3_{1,3}=A^{0,3}\oplus A^{1,2}$. Then $\alpha\in\H^3_{1,3}$ if and only if
\begin{align*}
\del\alpha_1+\delbar\alpha_2=0, && \delbar^*\alpha_1+\del^*\alpha_2=0, && \delbar^*\alpha_2=0.
\end{align*}
We can write
\[
\alpha_1=A_{\bar1\bar2\bar3}\phi^{\bar1\bar2\bar3}_t,
\]
\[
\alpha_2=\sum_{\substack{1\le j,k,l\le 3\\k<l}}A_{j\bar k\bar l}\phi^{j\bar k\bar l}_t,
\]
\[
*\alpha_1=iA_{\bar1\bar2\bar3}\phi^{\bar1\bar2\bar3}_t
\]
\begin{align*}
*\alpha_2&=-iA_{1\bar1\bar2}\phi^{3\bar2\bar3}_t+iA_{1\bar1\bar3}\phi^{2\bar2\bar3}_t-iA_{1\bar2\bar3}\phi^{1\bar2\bar3}_t\\
&+iA_{2\bar1\bar2}\phi^{3\bar1\bar3}_t-iA_{2\bar1\bar3}\phi^{2\bar1\bar3}_t+iA_{2\bar2\bar3}\phi^{1\bar1\bar3}_t\\
&-iA_{3\bar1\bar2}\phi^{3\bar1\bar2}_t+iA_{3\bar1\bar2}\phi^{3\bar1\bar2}_t-iA_{3\bar2\bar3}\phi^{1\bar1\bar2}_t
\end{align*}
and compute
\[
\del\alpha_1+\delbar\alpha_2=A_{3\bar1\bar3}(-\sigma_{1\bar2}\phi^{1\bar1\bar2\bar3}_t-\sigma_{2\bar2}\phi^{2\bar1\bar2\bar3}_t)+A_{3\bar2\bar3}(\sigma_{1\bar1}\phi^{1\bar1\bar2\bar3}_t+\sigma_{2\bar1}\phi^{2\bar1\bar2\bar3}_t)
\]
\[
\del*\alpha_1+\delbar*\alpha_2=iA_{1\bar1\bar2}(-\sigma_{1\bar1}\phi^{1\bar1\bar2\bar3}_t-\sigma_{2\bar1}\phi^{2\bar1\bar2\bar3}_t)+iA_{2\bar1\bar2}(-\sigma_{1\bar2}\phi^{1\bar1\bar2\bar3}_t-\sigma_{2\bar2}\phi^{2\bar1\bar2\bar3}_t),
\]
\begin{align*}
\del*\alpha_2&=(iA_{3\bar1\bar2}(-\sigma_{12})+iA_{1\bar1\bar3}(\c{\sigma_{1\bar1}})+iA_{1\bar2\bar3}(\c{\sigma_{1\bar2}})+iA_{2\bar1\bar3}(\c{\sigma_{2\bar1}})+iA_{2\bar2\bar3}(\c{\sigma_{2\bar2}}))\phi^{12\bar1\bar2}_t\\
&+A_{1\bar1\bar2}(-\sigma_{12}\phi^{12\bar2\bar3}_t-\phi^{3\bar2}_t\wedge\del\phi^{\bar3}_t)+A_{2\bar1\bar2}(\sigma_{12}\phi^{12\bar1\bar3}_t+\phi^{3\bar1}_t\wedge\del\phi^{\bar3}_t).
\end{align*}
Therefore $h^3_{1,3}=10-3-\rk D(t)$, \textit{i.e.}, $h^3_{1,3}=6$ in class (ii), while $h^3_{1,3}=5$ in class (iii).

\subsection{Remaining cases}
With similar but heavier computations it is possible to show
\begin{align*}
 h^3_{2,2}=8, && h^3_{2,3}=9, && h^4_{2,3}=6
\end{align*}
for both classes (ii) and (iii).

\subsection{Conclusions}
We report a table of the invariants that we have just computed for the small deformations of $\mathbb{I}_3$. We invite the reader to compare it with the table in the appendix of \cite{A}.
\begin{center}
\begin{tabular}{ccccccccc}
\toprule
$H^k_{p,q}$ & $h^{1}_{1,1}$ & $h^{1}_{1,2}$ & $h^{2}_{1,2}$  & $h^{3}_{1,3}$  & $h^{3}_{2,2}$  & $h^{3}_{2,3}$ & $h^{4}_{2,3}$ \\
\midrule

(i) & 6 & 5 & 8  & 7  & 8  & 9 & 6 \\

(ii.a) & 5 & 4 & 7  & 6  & 8  & 9 & 6 \\

(ii.b) & 4 & 4 & 7  & 6  & 8  & 9 & 6 \\

(iii.a) & 5 & 4 & 7  & 5  & 8  & 9 & 6 \\

(iii.b) & 4 & 4 & 7  & 5  & 8  & 9 & 6 \\
\bottomrule
\end{tabular}
\end{center}

Summing up, we have shown that the invariants $h^{k}_{p,q}$ can distinguish from different classes of the small deformations of the Iwasawa manifold $\mathbb I_3$. In particular, they can distinguish between all classes (i), (ii.a), (ii.b), (iii.a) and (iii.b).

This should be compared with the results in \cite{N}, where it is shown that the invariants $h^{p,q}_\delbar$ can distinguish between classes (i), (ii) and (iii), and with the results in \cite{A}, where it is proved that the invariants $h^{p,q}_A$ and $h^{p,q}_{BC}$ can distinguish between all classes (i), (ii.a), (ii.b), (iii.a) and (iii.b). In this sense, in this example, we can say that the invariants $h^{k}_{p,q}$ are  as powerful as $h^{p,q}_A$ and $h^{p,q}_{BC}$, in order to distinguish classes of complex structures.

As a final remark, we would like to point out that in \cite{A}, the (only) invariant providing the distinction between subclasses (a) and (b) is $h^{1,1}_A$($=h^2_{1,1}$) (and dually $h^{2,2}_{BC}$). Here, we are able to distinguish between subclasses (a) and (b) via the invariant $h^1_{1,1}$. Thus, the complex providing the whole distinction between subclasses (a) and (b) is $(\B^\bullet_{1,1},\delta^\bullet_{1,1})$.

\begin{remark}[Application of Theorem \ref{thm zigzags 3manifolds}]\label{rmk flavi}
In \cite{F} the double complexes of some fixed choices of small deformations in classes (ii.a), (iii.a) and (iii.b) of the Iwasawa manifold are computed. As a consequence of the above computations and of Theorem \ref{thm zigzags 3manifolds}, the double complexes of these fixed choices are actually the double complexes of all the small deformations in classes (ii.a), (iii.a) and (iii.b) of the Iwasawa manifold. Furthermore, since from the above table and \cite[Appendix]{A} we know all Betti, Dolbeault and Aeppli numbers plus $h^1_{1,1}$, $h^1_{1,2}$ and $h^4_{2,3}$, we can use Table \ref{table inverse}, as explained in Remark \ref{remark inverse}, to deduce the multiplicities of all the zigzags also in the remaining class (ii.b); they are reported in Table \ref{table class ii.b}. This is coherent with \cite[Section 9.1]{Ste22}, where the same result is obtained by a careful computation of the multiplicities of all the zigzags contributing to Betti, Dolbeault and Bott-Chern numbers.
\begin{table}[!ht]
\captionof{table}{Multiplicities of zigzags of class (ii.b) of small deformations of the Iwasawa manifold}\label{table class ii.b}
    \centering
    
    \begin{tabular}{|l|l|l|l|l|l|l|l|l|l|l|l|l|l|l|l|l|}
    \hline
        A & B & C & D & E & F & G & H & I & L & M & N & O & P & Q & R & S \\ \hline
        2 & 0 & 1 & 0 & 0 & 0 & 0 & 1 & 0 & 1 & 0 & 0 & 2 & 1 & 0 & 1 & 4 \\ \hline
    \end{tabular}
\end{table}
\end{remark}

\section{Almost complex manifolds}\label{sec_almost}
In this section we will consider an almost complex manifold $(M,J)$ of real dimension $2n$. In general the exterior derivative on $(p,q)$-forms acts as
\begin{equation*}
d:A^{p,q}\to A^{p+2,q-1}\oplus A^{p+1,q}\oplus A^{p,q+1}\oplus A^{p-1,q+2},
\end{equation*}
where we denote the four components of $d$ by
\begin{equation*}
d=\mu+\del+\delbar+\c\mu.
\end{equation*}
From the relation $d^2=0$, we derive
\begin{equation*}
\begin{cases}
\mu^2=0,\\
\mu\del+\del\mu=0,\\
\del^2+\mu\delbar+\delbar\mu=0,\\
\del\delbar+\delbar\del+\mu\c\mu+\c\mu\mu=0,\\
\delbar^2+\c\mu\del+\del\c\mu=0,\\
\c\mu\delbar+\delbar\c\mu=0,\\
\c\mu^2=0.
\end{cases}
\end{equation*}

\begin{remark}[Lack of a complete generalisation]
If the almost complex structure $J$ is not integrable, there are at least two problems which seem insurmountable in order to define a complete generalisation of the elliptic complex $(\B^\bullet_{p,q},\delta_{p,q}^\bullet)$ of section \ref{section_complex}:
\begin{enumerate}
\item as in the integrable case, we could define the second half of the complex by applying repeatedly the exterior derivative $d$ starting from $A^{p+1,q+1}$: in general this complex is not elliptic if $J$ is not integrable (one can verify that also the first half of the complex, defined by duality, is not elliptic);
\item there is no valid replacement for the operator $\del\delbar$ for the middle differential of the complex.
\end{enumerate}
\end{remark}

\begin{definition}[A partial generalisation]\label{definition_complex_almost}
Since $\del\delbar+\delbar\del=0$ when applied to $A^{0,0}$ or to $A^{n-1,n-1}$, the following sequences of maps are complexes
\[
A^{0,0}\overset{d}{\longrightarrow}A^{0,1}\oplus A^{1,0}\overset{\del\oplus\delbar}{\longrightarrow} A^{1,1},
\]
\[
A^{n-1,n-1}\overset{d}{\longrightarrow}A^{n-1,n}\oplus A^{n,n-1}\overset{d}{\longrightarrow} A^{n,n},
\]
which can be verified to be elliptic. In fact, they coincide respectively with the first two maps of the complex $(\B^\bullet_{1,1},\delta_{1,1}^\bullet)$ and with the last two maps of the complex $(\B^\bullet_{n-2,n-2},\delta_{n-2,n-2}^\bullet)$.
\end{definition}

\begin{definition}[Cohomology]
We can define the following two spaces
\[
H^1_{\B}:=\frac{\ker \del\oplus\delbar\cap A^{0,1}\oplus A^{1,0}}{d A^{0,0}},
\]
\[
H^{2n-1}_{\B}:=\frac{\ker d\cap A^{2n-1}_\C}{d A^{n-1,n-1}},
\]
which are the cohomology spaces of the two complexes just defined. Note that by definition we have the inclusion
\[
H^1_{dR}\subseteq H^1_\B.
\]
\end{definition}

\begin{definition}[Elliptic operators]
We endow our almost complex manifold $(M,J)$ with an almost Hermitian metric $g$. We can write the formal adjoints of the operators in the complexes in Definition \ref{definition_complex_almost} as
\[
A^{0,0}\overset{d^*}{\longleftarrow}A^{0,1}\oplus A^{1,0}\overset{d^*}{\longleftarrow} A^{1,1},
\]
\[
A^{n-1,n-1}\overset{\delbar^*\oplus\del^*}{\longleftarrow}A^{n-1,n}\oplus A^{n,n-1}\overset{d^*}{\longleftarrow} A^{n,n}.
\]
The elliptic and formally self adjoint operators
\[
\square^1_\B:A^{0,1}\oplus A^{1,0}\to A^{0,1}\oplus A^{1,0},
\]
\[
\square^{2n-1}_\B:A^{n-1,n}\oplus A^{n,n-1}\to A^{n-1,n}\oplus A^{n,n-1}
\]
associated to the complexes are then defined by
\[
\square^1_\B:=dd^*+d^*(\del\oplus\delbar),
\]
\[
\square^{2n-1}_\B:=d(\delbar^*\oplus\del^*)+d^*d.
\]
\end{definition}

\begin{remark}[Hodge theory]
Assume that the almost Hermitian manifold $(M,J,g)$ is compact. It follows that the spaces of harmonic forms
\[
\H^1_\B:=\ker\square^1_\B=\ker d^*\cap\ker\del\oplus\delbar,
\]
\[
\H^{2n-1}_\B:=\ker\square^{2n-1}_\B=\ker d\cap \ker \delbar^*\oplus\del^*
\]
have finite dimension $h^1_\B$ and $h^{2n-1}_\B$. Note that since
\[
\ker\Delta_d=\ker d\cap \ker d^*,
\]
then there are inclusions
\begin{align*}
\ker\Delta_d\cap A^k_\C\subseteq\H^k_\B
\end{align*}
for $k=1,2n-1$.
 Moreover, there are $L^2$ orthogonal decompositions
\[
A^1_\C=\H^1_\B\oplus d A^{0,0}\oplus d^* A^{1,1},
\]
\[
A^{2n-1}_\C=\H^{2n-1}_\B\oplus d A^{n-1,n-1}\oplus d^* A^{n,n}.
\]
It follows that
\[
\ker\del\oplus\delbar\cap A^{0,1}\oplus A^{1,0}=\H^1_\B\oplus d A^{0,0},
\]
\[
\ker d \cap A^{2n-1}_\C=\H^{2n-1}_\B\oplus d A^{n-1,n-1},
\]
therefore there are isomorphisms
\begin{align*}
H^1_\B\simeq \H^1_\B, & & H^{2n-1}_\B\simeq\H^{2n-1}_\B.
\end{align*}
In particular the numbers $h^1_\B$ and $h^{2n-1}_\B$ are metric independent.
The Hodge $*$ operator defines an isomorphism
\[
\H^1_\B\simeq\H^{2n-1}_\B,
\]
therefore $h^{1}_\B=h^{2n-1}_\B$.
\end{remark}

\begin{remark}[Upper semi-continuity]
By \cite[Theorem 4.3, Chapter 4]{KM}, on a given compact manifold, the dimension of the kernel of a smooth family of elliptic operators is upper semi-continuous. Therefore, it follows that $h^1_\B$ is upper semi-continuous along smooth deformations of the almost complex structure. The same result for $h^k_{p,q}$ on compact complex manifolds is shown in \cite[Corollary D]{Ste}.
\end{remark}

\begin{problem}[Explicit computation of $h^1_\B$]
The first immediate problem is how to compute this new invariant $h^1_\B$ in explicit examples. When the almost complex structure $J$ is integrable, as we did in Section \ref{sec_iwasawa}, there are many classes of nilmanifolds where the inclusion of left invariant forms into the de Rham algebra induces an isomorphism in the cohomology $H^k_{p,q}$. 

It seems then reasonable to ask if there are as well classes of nilmanifolds carrying left invariant almost complex structures for which the inclusion of left invariant forms into the de Rham algebra induces an isomorphism in the cohomology $H^1_\B$.
\end{problem}

\end{document}